\numberwithin{equation}{section}
\title{An efficient asymmetric removal lemma and its limitations}
\author{Lior Gishboliner\thanks{Department of Mathematics, University of Toronto, Toronto, Canada. Email: \texttt{lior.gishboliner@utoronto.ca}. Research was supported in part by SNSF grant 200021\_196965.} \and Asaf Shapira\thanks{School of Mathematics, Tel Aviv University, Tel Aviv, Israel. Email: \texttt{asafico@tau.ac.il}. Research was supported in part by ERC Consolidator Grant 863438 and NSF-BSF Grant 20196.} \and
Yuval Wigderson\thanks{Institute for Theoretical Studies, ETH Z\"urich, Z\"urich Switzerland. Email: \texttt{yuval.wigderson@eth-its.ethz.ch}. Research was supported in part by ERC Grants 863438 and 101044123, NSF-BSF Grant 20196, Dr.\ Max R\"ossler, the Walter Haefner Foundation, and the ETH Z\"urich Foundation.}}
\date{}
\theoremstyle{plain}
\newtheorem{theorem}{Theorem}[section]
\newtheorem{lemma}[theorem]{Lemma}
\newtheorem{proposition}[theorem]{Proposition}
\newtheorem{corollary}[theorem]{Corollary}
\newtheorem{conjecture}[theorem]{Conjecture}
\newtheorem{problem}[theorem]{Problem}
\theoremstyle{definition}
\newtheorem{remark}[theorem]{Remark}
\newtheorem{definition}[theorem]{Definition}
\newcommand{\Bin}{\ensuremath{\textrm{Bin}}}
\def\moverlay{\mathpalette\mov@rlay}
\def\mov@rlay#1#2{\leavevmode\vtop{%
   \baselineskip\z@skip \lineskiplimit-\maxdimen
   \ialign{\hfil$\m@th#1##$\hfil\cr#2\crcr}}}
\newcommand{\charfusion}[3][\mathord]{
    #1{\ifx#1\mathop\vphantom{#2}\fi
        \mathpalette\mov@rlay{#2\cr#3}
      }
    \ifx#1\mathop\expandafter\displaylimits\fi}
\renewenvironment{proof}[1][\proofname]
{\par\pushQED{\qed}
	\normalfont\topsep6\p@\@plus6\p@\relax\trivlist
	\item[\hskip\labelsep\bfseries#1\@addpunct{.}]
	\ignorespaces}
{\popQED\endtrivlist\@endpefalse}
\newcommand{\A}{\mathcal A}
\newcommand{\B}{\mathcal B}
\newcommand{\C}{\mathcal C}
\newcommand{\E}{\mathbb E}
\DeclareMathOperator\RS{RS}
\DeclareMathOperator\pr{Pr}
\newcommand\ab[1]{\lvert#1\rvert}
\newlist{lemenum}{enumerate}{1}
\setlist[lemenum]{label=(\alph*), ref=\thelemma(\alph*)}
\newlist{thmenum}{enumerate}{1}
\setlist[thmenum]{label=(\roman*), ref=\thetheorem(\roman*)}
\newlist{propenum}{enumerate}{1}
\setlist[propenum]{label=(\roman*), ref=\theproposition(\roman*)}
\DeclareMathOperator{\poly}{poly}
\begin{document}
\maketitle

\begin{abstract}
The triangle removal states that if $G$ contains $\varepsilon n^2$ edge-disjoint triangles, then $G$
contains $\delta(\varepsilon)n^3$ triangles. Unfortunately, there are no sensible bounds on the order of growth
of $\delta(\varepsilon)$, and at any rate, it is known that $\delta(\varepsilon)$ is not polynomial in $\varepsilon$.
Csaba recently obtained an \emph{asymmetric} variant of the triangle removal, stating
that if $G$ contains $\varepsilon n^2$ edge-disjoint triangles, then $G$
contains $2^{-\poly(1/\varepsilon)}\cdot n^5$ copies of $C_5$. To this end, he devised a new variant of Szemer\'edi's regularity lemma.
We obtain the following results:
\begin{itemize}
\item We first give a regularity-free proof of Csaba's theorem, which improves the number of copies of $C_5$ to the optimal number $\poly(\varepsilon)\cdot n^5$.
\item We say that $H$ is {\em $K_3$-abundant} if every graph containing $\varepsilon n^2$ edge-disjoint triangles has
$\poly(\varepsilon)\cdot n^{\ab{V(H)}}$ copies of $H$. It is easy to see that a $K_3$-abundant graph must be triangle-free and
tripartite. Given our first result, it is natural to ask if all triangle-free tripartite
graphs are $K_3$-abundant.
Our second result is that assuming a well-known conjecture of Ruzsa in additive number theory,
the answer to this question is negative.
\end{itemize}
Our proofs use a mix of combinatorial, number-theoretic, probabilistic, and Ramsey-type arguments.

\paragraph{MSC codes:} 05C35 (primary), 11B75 (secondary)
\end{abstract}
\newpage
\section{Introduction}\label{sec:intro}
\subsection{Background and previous results}
An $n$-vertex graph is said to be \emph{$\varepsilon$-far from triangle-free} if it cannot be made triangle-free by deleting fewer than $\varepsilon n^2$ edges\footnote{Note that if $G$ contains $\varepsilon n^2$ edge-disjoint triangles, then it is $\varepsilon$-far from triangle-free, as one must delete at least one edge from each triangle in this collection. Conversely, if $G$ is \emph{not} $(3\varepsilon)$-far from triangle-free, then any maximal collection of edge-disjoint triangles has less than $3 \varepsilon n^2$ edges, and thus $G$ does not contain $\varepsilon n^2$ edge-disjoint triangles. Therefore, the two notions are equivalent up to a constant factor, and we freely move between them throughout this paper.}.
A fundamental result in extremal graph theory is the triangle removal lemma of Ruzsa and Szemer\'edi \cite{RS}, which states that if an $n$-vertex graph is $\varepsilon$-far from triangle free, then it contains at least $\delta n^3$ triangles, where $\delta=\delta(\varepsilon)>0$ depends only on $\varepsilon$. Despite its simple statement, this is a deep result, and it has applications in graph theory, number theory, and theoretical computer science. For more on the triangle removal lemma, we refer to the survey \cite{CF}.

Despite decades of study, very little is known about the quantitative behavior of $\delta(\varepsilon)$ in the triangle removal lemma. The original proof of Ruzsa and Szemer\'edi used Szemer\'edi's regularity lemma, and consequently obtained a bound on $1/\delta$ that was of tower type in $\poly(1/\varepsilon)$. The best known bound is due to Fox \cite{Fox}, who improved the bound on $1/\delta$ to a tower of height $O(\log(1/\varepsilon))$ by finding a new proof which avoids the use of the regularity lemma. While a major improvement, this bound is still enormous, and it is a major open problem to improve it further.

In the other direction, the best known lower bound on $1/\delta$ is due to Ruzsa and Szemer\'edi \cite{RS}, who proved that $1/\delta \geq (1/\varepsilon)^{\Omega(\log(1/\varepsilon))}$. In particular, this implies that $\delta(\varepsilon)$ can not be taken to be a polynomial function of $\varepsilon$. Ruzsa and Szemer\'edi proved this by relating the triangle removal lemma to a problem in additive combinatorics, namely the problem of finding subsets of $[n]$ containing no three-term arithmetic progression, and using a well-known construction of Behrend \cite{Behrend} of such a subset.

If one examines the usual proof of the triangle removal lemma, one sees that it immediately gives the following stronger ``asymmetric'' statement (see e.g. \cite[Theorem 1.13]{FZ}).
\begin{theorem}\label{thm:homomorphism-removal}
	For every graph $H$ with $\chi(H) \leq 3$ and every $\varepsilon>0$, there exists $\delta = \delta(\varepsilon,H)>0$ such that the following holds. If an $n$-vertex graph $G$ is $\varepsilon$-far from triangle-free, then it contains at least $\delta n^{\ab{V(H)}}$ copies of $H$.
\end{theorem}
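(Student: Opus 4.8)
The plan is to run the standard regularity-based proof of the triangle removal lemma, invoking the hypothesis $\chi(H)\le 3$ only at the final embedding step. Fix $H$ with $\chi(H)\le 3$ and $\varepsilon>0$. First I would pick auxiliary parameters $d$ and $\varepsilon'$ with $d\ll\varepsilon$ and $\varepsilon'\ll d,\,1/\ab{V(H)}$, and apply Szemer\'edi's regularity lemma to the $n$-vertex graph $G$ to obtain an $\varepsilon'$-regular equitable partition $V_1\cup\cdots\cup V_M$ with $M\le M(\varepsilon')$ bounded. Then I would \emph{clean} $G$ by deleting all edges lying inside a part $V_i$, all edges between pairs $(V_i,V_j)$ that fail to be $\varepsilon'$-regular, and all edges between pairs of density less than $d$. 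A routine estimate shows fewer than $\varepsilon n^2$ edges are removed: the within-part edges contribute $O(n^2/M)$, the irregular pairs at most $\varepsilon' n^2$, and the sparse pairs at most $d n^2$, and all three terms are $<\varepsilon n^2/3$ by the choice of parameters (with $n$ large; small $n$ is handled trivially by shrinking $\delta$).

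Since $G$ is $\varepsilon$-far from triangle-free, the cleaned graph $G'$ still contains a triangle. By construction its three vertices lie in three distinct parts $V_a,V_b,V_c$, and each of the three pairs among $V_a,V_b,V_c$ is $\varepsilon'$-regular with density at least $d$.

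Now I use $\chi(H)\le 3$: fix a proper $3$-colouring of $H$, giving a partition $V(H)=I_1\cup I_2\cup I_3$ into independent sets. Embed $I_1$ into $V_a$, $I_2$ into $V_b$, and $I_3$ into $V_c$, and apply the counting (embedding) lemma for regular tuples. Because the three relevant pairs are $\varepsilon'$-regular of density at least $d$ and $\varepsilon'$ is small compared to $d$ and $1/\ab{V(H)}$, the number of copies of $H$ respecting this vertex-to-part assignment is at least $c\cdot\ab{V_a}^{\ab{I_1}}\ab{V_b}^{\ab{I_2}}\ab{V_c}^{\ab{I_3}}$ for some $c=c(d,\ab{V(H)})>0$. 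Since each part has size $\ab{V_i}\ge n/(2M)$, this is at least $\delta n^{\ab{V(H)}}$ for a suitable $\delta=\delta(\varepsilon,H)>0$, absorbing the dependence of $M$ on $\varepsilon'$ (hence on $\varepsilon$) into $\delta$. This would complete the proof.

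I expect no real obstacle to the \emph{existence} of $\delta$; the one substantive ingredient is the counting lemma for embedding a fixed tripartite graph into a regular tripartite structure, and here one must be mildly careful since $H$ may send several vertices into the same part, so one needs the form of the counting lemma that controls, via regularity inheritance (slicing / the defect form of regularity), the number of common neighbours of a small vertex set. This is standard. The only price is that $\delta(\varepsilon,H)$ inherits the tower-type dependence on $\varepsilon$ from the regularity lemma — precisely the weakness that the rest of the paper is devoted to circumventing (for specific $H$) or showing cannot be circumvented.
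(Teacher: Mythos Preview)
Your proposal is correct and is precisely the ``usual proof of the triangle removal lemma'' that the paper alludes to (the paper does not give its own proof of this statement, merely noting that it follows immediately from the standard regularity-based argument and citing \cite{FZ}). Your outline---regularity lemma, cleaning, finding a surviving triangle among three regular dense pairs, then embedding $H$ via the counting lemma using a fixed proper $3$-colouring---is exactly that argument, and your remark about the tower-type dependence is on point.
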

It is easy to see that the assumption $\chi(H)\leq 3$ is necessary in \cref{thm:homomorphism-removal}, as a balanced complete tripartite graph on $n$ vertices has no copies of any $H$ with $\chi(H)>3$, but is $\frac 19$-far from triangle-free.

The standard proof of \cref{thm:homomorphism-removal} gives tower-type bounds on $\delta$, as it uses Szemer\'edi's regularity lemma. But it is natural to ask whether, for certain graphs $H$, one can get a stronger bound in \cref{thm:homomorphism-removal}. It turns out that in certain instances, one can.
\begin{theorem}[Csaba {\cite[Theorem 5.2]{Csaba}}]\label{thm:csaba}
	If an $n$-vertex graph $G$ is $\varepsilon$-far from triangle-free, then it contains at least $2^{-\poly(1/\varepsilon)}\cdot n^5$ copies of $C_5$.
\end{theorem}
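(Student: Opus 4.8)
The plan is to pass from $G$ to a bounded-complexity ``regular enough'' tripartite core and count copies of $C_5$ there. Using the equivalence noted in the footnote above, I may assume $G$ contains $\Omega(\eps n^2)$ pairwise edge-disjoint triangles; in particular $G$ is $\Omega(\eps)$-far from bipartite, so the copies of $C_5$ I am after cannot come from one localized structure but must reflect non-bipartiteness that is ``spread out''. (I also implicitly assume $n$ is large in terms of $\eps$; otherwise the claimed bound is below $1$ and vacuous.)

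\emph{Step 1: finding a dense regular core.} I would apply a weak regularity lemma to $G$ --- ideally a Frieze--Kannan-type lemma strengthened so that it \emph{also} controls the internal pseudorandomness of the bipartite graphs between parts --- run with parameter $\poly(\eps)$, so that the number of parts is $k\le 2^{\poly(1/\eps)}$. Deleting all edges inside parts, all edges in part-pairs of density below $\tau:=\eps/4$, and all edges in non-regular pairs removes fewer than $\eps n^{2}$ edges, so since $G$ is $\eps$-far from triangle-free the remaining graph still contains a triangle; having no edges inside parts, this triangle lies in three distinct parts $A,B,C$, each of size at least $n/(2k)=2^{-\poly(1/\eps)}n$, whose three pairwise densities are all at least $\tau$ and such that $G[A,B],G[B,C],G[A,C]$ are internally pseudorandom relative to their densities.

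\emph{Step 2: counting $C_5$'s in the core.} Colour the five vertices of $C_5$ in cyclic order by $A,B,A,B,C$ (a proper colouring, with colour-class sizes $2,2,1$) and count copies of $C_5$ in $A\cup B\cup C$ respecting this pattern. Writing $M,Q,P$ for the bipartite adjacency matrices of $G[A,B],G[B,C],G[A,C]$, the number of such homomorphisms equals $\operatorname{tr}(MM^{\top}M\,QP^{\top})$. Substituting $M=d_{AB}\,\mathbf 1\mathbf 1^{\top}+E_{AB}$ and likewise for $Q,P$, where each error matrix has small operator norm by Step~1, the main term of the expansion is $d_{AB}^{3}\,d_{BC}\,d_{CA}\,|A|^{2}|B|^{2}|C|\ \ge\ \tau^{5}(n/2k)^{5}$, and every term containing an error factor is negligible in comparison (equivalently, run the usual neighbourhood-slicing counting-lemma argument twice, once to reduce to common neighbourhoods in $C$ of the endpoints of a path, and once to count the length-$3$ paths in $G[A,B]$). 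Since $n$ is large, the non-injective homomorphisms --- which trace out a triangle of $G$ with a pendant vertex and hence number $O(n^{4})$ --- are also negligible. Thus $G$ has at least $\tfrac12\tau^{5}(n/2k)^{5}=2^{-\poly(1/\eps)}n^{5}$ copies of $C_5$.

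\emph{The main obstacle.} The whole difficulty sits in Step~1. A plain Frieze--Kannan partition controls only the densities between parts, and this is genuinely insufficient: a disjoint union of ``chains'' of complete bipartite graphs has all relevant densities bounded below yet is bipartite, hence $C_5$-free; and, unlike in Szemer\'edi regularity, one cannot cheaply delete the error term of a Frieze--Kannan partition. So one really needs the three bipartite graphs of the core to be internally pseudorandom, and obtaining this with only $2^{\poly(1/\eps)}$ parts --- rather than the tower-type partition that Szemer\'edi's lemma produces and that underlies \cref{thm:homomorphism-removal} --- is exactly what a new regularity variant (presumably the one Csaba devised) has to deliver. A regularity-free alternative worth attempting is spectral: since $10\cdot(\text{number of }C_5\text{'s in }G)=\operatorname{tr}(A^{5})-O((\text{number of triangles in }G)\cdot n)$ and $\operatorname{tr}(A^{5})=\sum_i\lambda_i^{5}$, one would lower-bound the sum by isolating $\lambda_1^{5}\ge(2e(G)/n)^{5}=\Omega(\eps^{5}n^{5})$ and controlling the remaining eigenvalues --- only $\poly(1/\eps)$ of which can be large, since $\sum_i\lambda_i^{2}=2e(G)\le n^{2}$ --- by decomposing $G$ along the corresponding eigenvectors; the obstruction there is a large \emph{negative} eigenvalue dragging the sum down, which is precisely where the far-from-bipartite hypothesis must be used. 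Pushing either route to the optimal bound $\poly(\eps)n^{5}$ (the paper's first main theorem) seems to require dispensing with the $2^{\poly}$-sized object altogether.
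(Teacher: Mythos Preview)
Your plan is essentially Csaba's original route, and you correctly identify that the entire content sits in Step~1: one needs a weak-regularity partition into $2^{\poly(1/\eps)}$ parts whose inter-part bipartite graphs are \emph{internally} pseudorandom enough to support a counting lemma. With such a lemma granted, your Step~2 is fine (the matrix expression and the $O(n^4)$ degenerate-homomorphism estimate are both correct). But you leave Step~1 as a black box --- and, as you yourself note, plain Frieze--Kannan regularity genuinely fails here --- so what you have is a plan rather than a proof.

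The paper does not supply that lemma either; it abandons regularity altogether and proves the strictly stronger \cref{thm:odd-cycle-abundant} by an elementary local-sampling argument (Lemma~\ref{lem:odd cycles}, case $k=1,\ell=2$). Start from $\eps n^2$ edge-disjoint triangles and clean up so every vertex lies in $0$ or $\geq\eps n/2$ of them; fix one such vertex $v_0$. Among triangles meeting $N(v_0)$ but avoiding $v_0$, clean up again so that for each position $j\in\{1,2,3\}$ of the triangle one obtains a set $U_j$ of $\Omega(\eps^2 n)$ vertices, each with $\Omega(\eps^2 n)$ neighbours in $U_{j\pm1}$, and $U_1\subseteq N(v_0)$. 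Now greedily thread a path $u_1u_2u_3u_4$ through $U_1,U_2,U_3,U_1$ and close through $v_0$ to get a $C_5$; this shows that a random sample of $\poly(1/\eps)$ vertices contains a $C_5$ with probability at least $2/3$, hence $G$ has $\poly(\eps)\,n^5$ copies. Your route, even if Step~1 were completed, is capped at $2^{-\poly(1/\eps)}n^5$ because the partition size sits in the denominator; the paper's argument builds structure locally around a single vertex and incurs only polynomial losses --- confirming your final remark that reaching $\poly(\eps)n^5$ requires ``dispensing with the $2^{\poly}$-sized object altogether''.
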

In other words, Csaba showed that in the special case $H=C_5$, one can replace the tower-type bound in \cref{thm:homomorphism-removal} by a single-exponential bound. To prove this, Csaba developed a new variant of Szemer\'edi's regularity lemma, somewhat akin to the weak regularity lemmas of Frieze--Kannan \cite{FK} and Duke--Lefmann--R\"odl \cite{DLR}, which does not yield tower-type dependencies between the parameters and is nonetheless strong enough to prove results like \cref{thm:csaba}. We also mention a recent result of Conlon, Fox, Sudakov, and Zhao \cite[Corollary 1.4]{CFSZ}, who proved that if $G$ has \emph{zero} copies of $C_5$, then it can be made triangle-free by deleting $o(n^{3/2})$ edges; this result was proved via new techniques in the regularity method for sparse graphs.

\subsection{An optimal result for odd cycles}
Our first main result is an improvement of \cref{thm:csaba}, which reduces the single-exponential bound to a polynomial bound. In fact, we prove a more general result, which holds for all pairs of odd cycles; our improvement to \cref{thm:csaba} corresponds to the case $k=1,\ell=2$ of the following theorem.
Extending the terminology above, one says that an $n$-vertex graph $G$ is \emph{$\varepsilon$-far} from satisfying a graph property $\mathcal P$ if one must add or delete at least $\varepsilon n^2$ edges to $G$ in order to create a graph satisfying property $\mathcal P$.
\begin{theorem}\label{thm:odd-cycle-abundant}
	Let $1 \leq k <\ell$ be integers and let $\varepsilon>0$. If $G$ is $\varepsilon$-far from $C_{2k+1}$-free, then it contains at least $(c_\ell \varepsilon^{4\ell+2})n^{2\ell+1}$ copies of $C_{2\ell+1}$, where $c_\ell>0$ is a constant depending only on $\ell$.
\end{theorem}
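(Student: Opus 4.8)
The plan is to pass to \emph{homomorphic} copies and reduce the whole statement to a walk‑counting estimate for the adjacency matrix. An injective homomorphism from $C_{2\ell+1}$ into $G$ is exactly a (labelled) copy of $C_{2\ell+1}$, while the number of \emph{non}-injective closed walks of length $2\ell+1$ in an $n$-vertex graph is at most $(2\ell+1)^2 n^{2\ell}$; so, writing $A$ for the adjacency matrix of $G$ (so that $\mathrm{tr}(A^{2\ell+1})$ is the number of closed walks of length $2\ell+1$, i.e.\ of homomorphic copies of $C_{2\ell+1}$), it suffices to prove $\mathrm{tr}(A^{2\ell+1})\ge c'_\ell\,\varepsilon^{4\ell+2}\,n^{2\ell+1}$. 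I would assume throughout that $n$ is large in terms of $\ell$ and $1/\varepsilon$, the remaining bounded cases being absorbed into the constant $c_\ell$.

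First I would preprocess. Since $G$ is $\varepsilon$-far from $C_{2k+1}$-free it has at least $\varepsilon n^2$ edges, so repeatedly deleting vertices of degree below $\tfrac{\varepsilon}{3}n$ discards fewer than $\tfrac{\varepsilon}{3}n^2$ edges and leaves a non‑empty subgraph $G_1$ on $n_1\ge\tfrac{\varepsilon}{3}n$ vertices with minimum degree at least $\tfrac{\varepsilon}{3}n\ge \delta n_1$, where $\delta:=\tfrac\varepsilon3$, which is still $\tfrac{2\varepsilon}{3}$-far from $C_{2k+1}$-free. By the maximal‑collection argument of the footnote, $G_1$ then contains at least $c\varepsilon n_1^2$ pairwise edge-disjoint copies of $C_{2k+1}$. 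As copies of $C_{2\ell+1}$ in $G_1$ are copies in $G$ and $n_1\ge\delta n$, it is enough to show $\mathrm{tr}(A_{G_1}^{2\ell+1})\ge \poly(\delta)\,n_1^{2\ell+1}$ (the polynomial costing at most a factor $\varepsilon^{2\ell+1}$).

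The two ingredients I would use, writing $A:=A_{G_1}$, are: (i) \emph{minimum degree.} $A\mathbf 1\ge \delta n_1\mathbf 1$ entrywise, hence $A^r\mathbf 1\ge (\delta n_1)^r\mathbf 1$; by Cauchy--Schwarz $(A^{2s})_{xx}=\sum_y (A^s)_{xy}^2\ge \delta^{2s}n_1^{2s-1}$ for every $x$; and since $(A^r)_{xz}\le n_1^{r-1}$ while $\sum_x(A^r)_{xz}\ge(\delta n_1)^r$, each fixed $z$ has at least $\tfrac12\delta^r n_1$ vertices $x$ with $(A^r)_{xz}\ge\tfrac12\delta^r n_1^{r-1}$. (ii) \emph{short cycles.} If $ab$ lies on one of our $C_{2k+1}$'s, then $a,b$ are joined by a path of length $2k$, hence for every $t\ge 2k$ by walks of length $t$ of \emph{both} parities (pad either the edge or this path with back‑and‑forth steps); so there are at least $c\varepsilon n_1^2$ ordered ``flip pairs''. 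I would set $p:=\ell+k$ and $q:=\ell-k+1$, so $p+q=2\ell+1$ and $p-q=2k-1$ is the parity jump a flip pair realises.

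The heart of the argument is to deduce that $G_1$ has at least $\poly(\delta)\,n_1^{2}$ pairs $(u,v)$ with $(A^p)_{uv}(A^q)_{uv}\ge \poly(\delta)\,n_1^{2\ell-1}$: then $\mathrm{tr}(A^{2\ell+1})=\sum_{u,v}(A^p)_{uv}(A^q)_{uv}\ge\poly(\delta)\,n_1^{2\ell+1}$, as required. The intended mechanism: from a flip pair $ab$, ingredient (i) gives $\poly(\delta)\,n_1$ vertices $u$ with many length-$q_1$ walks to $a$ and $\poly(\delta)\,n_1$ vertices $v$ with many length-$q_2$ walks to $b$ (where $q_1+q_2=\ell-k$); routing $u\rightsquigarrow a\to b\rightsquigarrow v$ through the edge $ab$ gives many walks of length $q$ between $u,v$, and routing through the length-$2k$ alternate path gives many walks of length $p$ --- the parity being flipped precisely by the short cycle. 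The role of the hypothesis $k<\ell$ is exactly this: in a bipartite graph no pair is joined by many walks of both an even and an odd length (and a bipartite graph indeed has no $C_{2\ell+1}$ at all), so the short cycles are what break this parity obstruction. I expect the main difficulty to be quantitative: a single $C_{2k+1}$-edge offers only a \emph{width-one} flip channel (one alternate path, through $\approx 2k$ fixed vertices), and routing through it pins the walk and costs a factor $n_1^{\Theta(k)}$ --- the naive version of the above produces only $\poly(\delta)\,n_1^{2(\ell-k)+1}$ closed walks, short by a factor $n_1^{2k}$. Recovering this factor is the crux, and it should require genuinely exploiting the \emph{abundance} $\Omega(\varepsilon n^2)$ of edge-disjoint short cycles --- e.g.\ an averaging/regularisation over this family to locate a substructure through which parity can be flipped ``in bulk'', or a sharper walk-counting identity that avoids the pinning --- rather than a single short cycle. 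Granting this, bookkeeping the $\varepsilon$-powers ($\approx\varepsilon^{2\ell+1}$ from passing to $G_1$, another $\approx\varepsilon^{2\ell+1}$ from the walk count) yields $c_\ell\varepsilon^{4\ell+2}n^{2\ell+1}$, and the homomorphism-to-copy reduction of the first paragraph finishes the proof.
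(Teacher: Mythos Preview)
You have correctly identified the crux: routing through a single edge of a single $C_{2k+1}$ pins $\Theta(k)$ vertices and costs a factor of $n^{2k}$, and you are right that recovering this factor requires exploiting the \emph{abundance} of edge-disjoint short cycles rather than one cycle at a time. However, your proposal does not actually carry out this step --- you write ``Granting this'' and gesture at ``averaging/regularisation'' without specifying what structure to extract or how. This is not bookkeeping; it is the entire content of the argument, and your spectral framework as written does not suggest a mechanism for it. (Your minimum-degree preprocessing of $G$ is orthogonal to the real difficulty: it gives many walks of each fixed length from every vertex, but says nothing about how to flip parity without pinning.)

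The paper's argument fills exactly this gap, and its mechanism is instructive. Rather than counting walks via the adjacency matrix, it works directly with the family $\mathcal{C}$ of edge-disjoint $C_{2k+1}$'s and performs two rounds of regularisation. First it prunes $\mathcal{C}$ so that every surviving vertex lies in at least $\varepsilon n/2$ cycles. It then fixes a vertex $v_0$, takes its cycle-neighbourhood $N$, and considers the subfamily $\mathcal{C}(v_0)$ of cycles meeting $N$ but avoiding $v_0$ (there are $\Omega_k(\varepsilon^2 n^2)$ of these). The key step is to label each such cycle by an isomorphism $f_C:C_{2k+1}\to C$ sending vertex $1$ into $N$, and then regularise \emph{by position}: for each $j\in[2k+1]$ discard cycles through any vertex $u$ that appears at position $j$ in fewer than $\varepsilon^2 n/(50k^2)$ cycles. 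What remains is a ``fat blow-up'' of $C_{2k+1}$: sets $U_1,\dots,U_{2k+1}$, each of size $\Omega_k(\varepsilon^2 n)$, with $U_1\subseteq N(v_0)$ and every $u\in U_j$ having $\Omega_k(\varepsilon^2 n)$ neighbours in each of $U_{j-1},U_{j+1}$. Now one fixes a homomorphism $\varphi:P_{2\ell-1}\to C_{2k+1}$ with $\varphi(1)=\varphi(2\ell)=1$ (this is where $k<\ell$ and the oddness of $2k+1$ are used) and greedily builds a path $u_1,\dots,u_{2\ell}$ with $u_i\in U_{\varphi(i)}$, making $\Omega_k(\varepsilon^2 n)$ choices at every step; closing through $v_0$ gives a $C_{2\ell+1}$. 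The parity flip is no longer width one: at each step along the short-cycle template you have $\Omega(n)$ choices, so the $n^{2k}$ shortfall never arises. This double regularisation of the cycle family --- and in particular the idea of regularising by \emph{position along the labelled cycle} --- is the substantive idea your proposal is missing.
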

In contrast to the earlier proofs of \cref{thm:homomorphism-removal,thm:csaba}, which used complicated regularity-type arguments, our proof of \cref{thm:odd-cycle-abundant} uses elementary but subtle counting arguments.
Note that the dependence on $\varepsilon$ in \cref{thm:odd-cycle-abundant} is best possible up to a factor of $2$ in the exponent. Indeed, a random $n$-vertex graph with edge density $\varepsilon$ has $\Theta_\ell(\varepsilon^{2\ell+1})\cdot n^{2\ell+1}$ copies of $C_{2\ell+1}$, and is $\Theta_k(\varepsilon)$-far from $C_{2k+1}$-free. This observation, as well as \cref{thm:odd-cycle-abundant}, motivates the following definition.

\begin{definition}[$K_3$-abundant]\label{def:k3-abundant}
	Let $H$ be a graph. We say that $H$ is \emph{$K_3$-abundant} if there exists some $C_H>0$ such that for all $0<\varepsilon \leq \frac 12$, all integers $n$, and any $n$-vertex graph $G$ which is $\varepsilon$-far from triangle-free, the number of copies of $H$ in $G$ is at least $\varepsilon^{C_H} \cdot n^{\ab{V(H)}}$.
\end{definition}
Informally, this definition says that $H$ is {$K_3$-abundant} if we may take $\delta = \poly_H(\varepsilon)$ in \cref{thm:homomorphism-removal}. In this language, \cref{thm:odd-cycle-abundant} implies that $C_{2\ell+1}$ is $K_3$-abundant for all $\ell \geq 2$.

By the discussion above, any graph $H$ with $\chi(H)>3$ cannot be $K_3$-abundant, as \cref{thm:homomorphism-removal} is simply false for such graphs. At the other extreme,
it is easy to see that every bipartite graph $H$ is $K_3$-abundant, but for a fairly uninteresting reason: if an $n$-vertex graph $G$ contains fewer than $\varepsilon^{C_H} \cdot n^{\ab{V(H)}}$ copies of some bipartite $H$, then $G$ has fewer than $\varepsilon n^2$ edges, and thus is certainly not $\varepsilon$-far from triangle-free. This follows from a convexity argument originally due to K\H ov\'ari, S\'os, and Tur\'an \cite{KST} (see also \cite{Alon}).

On the other hand, $K_3$ itself is not $K_3$-abundant, thanks to the Ruzsa--Szemer\'edi result that one does not have polynomial bounds in the triangle removal lemma. Moreover, if $H$ is $K_3$-abundant, then so is any subgraph of it. This immediately implies that if $H$ contains a triangle, then $H$ cannot be $K_3$-abundant. So any non-bipartite $K_3$-abundant graph must be tripartite and triangle-free. The simplest examples of such graphs are odd cycles (of length at least $5$), which are indeed $K_3$-abundant by \cref{thm:odd-cycle-abundant}. Moreover, it is easy to see\footnote{Indeed, if $H_1$ is homomorphic to $H_2$ and an $n$-vertex graph $G$ contains $\delta n^{\ab{V(H_2)}}$ copies of $H_2$, then it contains $\poly(\delta)n^{\ab{V(H_1)}}$ copies of $H_1$, which implies that $H_1$ is $K_3$-abundant if $H_2$ is. This follows from the hypergraph analogue of the K\H ov\'ari--S\'os--Tur\'an theorem due to Erd\H os \cite{Erdos}.} that any graph which is homomorphic to a $K_3$-abundant graph is also $K_3$-abundant, hence any graph homomorphic to an odd cycle of length at least $5$ is $K_3$-abundant.

Based on all of this, it is natural to ask whether \emph{all} triangle-free tripartite graphs are $K_3$-abundant.
\subsection{Not all graphs are abundant}
Our second main result is that not all triangle-free tripartite graphs are $K_3$-abundant, assuming a well-known conjecture of Ruzsa \cite{Ruzsa} in additive combinatorics. To state this conjecture, we need to set up some notation. Consider a linear equation $\sum_{i=1}^k a_i x_i = 0$, where the coefficients $a_1,\dots,a_k$ are integers. Following Ruzsa \cite{Ruzsa}, one says that this equation has \emph{genus one} if $\sum_{i=1}^k a_i=0$, but $\sum_{i \in T} a_i \neq 0$ for all $\varnothing \subsetneq T \subsetneq [k]$.
We say that integers $y_1,\dots,y_k$ form a \emph{non-trivial solution\footnote{For general equations, there are other types of trivial solutions, but these are the only ones one needs to consider if an equation has genus one.}} to this equation if $\sum_{i=1}^k a_{i} y_i=0$ and the $y_i$ are not all equal. We denote by $r_E(m)$ the maximum size of a subset of $[m]$ containing no non-trivial solution to an equation $E$.

With this notation in place, Ruzsa's genus conjecture\footnote{In fact, Ruzsa formulated a more general conjecture, predicting the rough behavior of $r_E(m)$ for any equation $E$, as a function of its genus. But we do not need this more general statement.} says the following.
\begin{conjecture}[Ruzsa \cite{Ruzsa}]\label{conj:ruzsa}
  If $E$ is a linear equation of genus one, then $r_E(m) \geq m^{1-o(1)}$.
\end{conjecture}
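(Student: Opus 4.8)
The plan is to adapt Behrend's construction of large progression-free sets. Recall its shape: fix a dimension $d$ and an integer $k$, let $S_0$ consist of the lattice points $\vec p\in\{0,1,\dots,k-1\}^d$ that lie on the most popular sphere $\|\vec p\|_2^2=r$ (so $|S_0|\gg k^{d-2}/d$ by pigeonhole), and encode each such $\vec p$ as the integer $\sum_{l}p_lB^l\in[B^d]$ for a base $B$ chosen large enough that the equation at hand is ``carry-free'' on these digits. For a three-term progression, strict convexity of the Euclidean ball shows that no three sphere points are collinear with one the midpoint of the other two, so the image set is progression-free, and optimizing $d\asymp\sqrt{\log m}$ produces a progression-free subset of $[m]$ of size $m^{1-o(1)}$, in fact $m\exp(-\Theta(\sqrt{\log m}))$.

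First I would carry this out for a natural subclass of genus-one equations. Writing $E$ as $\sum_ia_ix_i=0$, split $[k]=P\cupdot N$ according to the sign of $a_i$; since $\sum_ia_i=0$ we have $A:=\sum_{i\in P}a_i=\sum_{i\in N}|a_i|$. If the $\vec p_i\in S_0$ satisfy $E$, then $\tfrac1A\sum_{i\in P}a_i\vec p_i=\tfrac1A\sum_{i\in N}|a_i|\vec p_i$, with each side a convex combination of sphere points. Call $E$ \emph{one-sided} if one of $P,N$ is a singleton --- equivalently, if $E$ has the form $\sum_jb_jx_j=\bigl(\sum_jb_j\bigr)y$ with all $b_j>0$, which is automatically of genus one. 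Then the singleton side is a single sphere point $\vec p_{i_0}$, which lies \emph{on} the sphere, so strict convexity of $\|\cdot\|_2^2$ (via Jensen's inequality) forces all the $\vec p_i$ on the other side to coincide, hence all the $\vec p_i$ coincide and the solution is trivial. The genus-one hypothesis further guarantees (this is the content of the footnote) that ``all equal'' is the only type of trivial solution, so the encoded set really does witness $r_E(m)$; and choosing $B\asymp_E k$, say $B>2(k-1)\sum_i|a_i|$, makes the substitution carry-free, so that $\sum_ia_i\bigl(\text{encoding of }\vec p_i\bigr)=0$ forces the vector identity $\sum_ia_i\vec p_i=\vec 0$. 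Behrend's density optimization then yields $r_E(m)\ge m^{1-o(1)}$ for every one-sided $E$, recovering Behrend's theorem in the special case of three-term progressions.

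The main obstacle --- and the reason the statement remains a conjecture --- is the general genus-one equation with $|P|,|N|\ge 2$, such as $x_1+3x_2=2x_3+2x_4$. For such $E$ neither side of $\tfrac1A\sum_{i\in P}a_i\vec p_i=\tfrac1A\sum_{i\in N}|a_i|\vec p_i$ need land on the sphere, Jensen only gives ``$\le\rho^2$'' on both sides with no forced equality, and in fact a Euclidean sphere carries many non-trivial solutions to such an $E$. One therefore needs to replace $S_0$ by a convex set $K\subseteq\mathbb R^d$ with $\gg m^{1-o(1)}$ lattice points on $\partial K$ that is genuinely \emph{$E$-free}, meaning $\vec p_i\in\partial K$ with $\sum_ia_i\vec p_i=\vec 0$ forces all $\vec p_i$ equal. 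Plausible strategies are to iterate or tensor the sphere construction so as to peel off one ``layer'' of the equation at a time, or to tailor $K$ to the coefficient pattern of $E$; simultaneously achieving ``$E$-free'' and ``lattice-rich'' for every genus-one $E$ is exactly where I expect the real difficulty to lie, and the more general conjecture reading off $r_E(m)$ from the genus of $E$ is likewise open.
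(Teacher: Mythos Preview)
The statement you are addressing is a \emph{conjecture}, not a theorem: the paper does not prove it, but rather assumes it as a hypothesis in \cref{thm:main-non-abundant}. So there is no ``paper's own proof'' to compare against, and you correctly recognize this in your write-up (``the reason the statement remains a conjecture'').

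Your partial argument for the one-sided (convex) case is correct and is exactly the known result the paper cites as \cite[Theorem 2.3]{Ruzsa}: when $E$ has a single negative (or single positive) coefficient, Behrend's sphere construction with strict convexity forces all solution points to coincide, yielding $r_E(m)\ge m^{1-o(1)}$. You have also correctly identified the obstruction in the general case: for equations like $x_1+3x_2=2x_3+2x_4$, both sides of the rewritten identity are genuine convex combinations that may land strictly inside the sphere, so Jensen gives no contradiction and the sphere does carry non-trivial solutions. This is precisely the equation the paper singles out in \cref{sec:conclusion} as the simplest open instance, noting that the best known lower bound for it is only $\Omega(\sqrt m)$.

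In short: your proposal is not a proof of the conjecture, nor does it claim to be; it is an accurate survey of what Behrend-type methods achieve and where they break down, matching both the literature and the paper's own discussion.
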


We can now state our second main theorem.

\begin{theorem}\label{thm:main-non-abundant}
	Assuming that \cref{conj:ruzsa} holds, there exists a triangle-free tripartite graph $H$ which is not $K_3$-abundant.
\end{theorem}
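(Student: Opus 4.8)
The plan is to revisit the classical argument of Ruzsa and Szemer\'edi showing that $K_3$ itself is not $K_3$-abundant, replacing three-term arithmetic progressions by solutions of a genus-one equation $E$. Fix such an $E\colon\sum_{i=1}^k a_ix_i=0$ (to be chosen, together with $H$, at the end of the argument), and use \cref{conj:ruzsa} to obtain, for every $m$, a set $A=A_m\subseteq[m]$ with $\ab{A}\ge m^{1-o(1)}$ and no non-trivial solution to $E$. From $A$ I would build an $n$-vertex graph $G=G(A)$, with $n=\Theta(m)$, by a Ruzsa--Szemer\'edi-type construction whose ``scalings'' are governed by the coefficients of $E$; in the simplest case $G$ is tripartite with the three bipartite pieces being ``difference in $\alpha A$'', ``difference in $\beta A$'' and ``difference in $\gamma A$'' for suitable positive integers $\alpha,\beta,\gamma$ with $\gamma=\alpha+\beta$. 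The relation $\gamma=\alpha+\beta$ guarantees that $G$ contains the $\Omega(m\ab{A})$ pairwise edge-disjoint ``diagonal'' triangles $\{u,\ u+\alpha a,\ u+\gamma a\}$ with $a\in A$, so that $G$ is $\varepsilon$-far from triangle-free with $\varepsilon=\Theta(\ab{A}/m)=n^{-o(1)}$. The heart of the proof is to arrange that \emph{every copy of $H$ in $G$ forces a solution of $E$ with all variables in $A$}. Granting this, since $A$ has no non-trivial $E$-solution its only $E$-solutions are the trivial constant ones, and one deduces that $G$ contains at most $n^{\ab{V(H)}-c}$ copies of $H$ for some constant $c=c(H)>0$. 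But if $H$ were $K_3$-abundant, then, $G$ being $\varepsilon$-far from triangle-free with $\varepsilon=n^{-o(1)}$, it would contain at least $\varepsilon^{C_H}n^{\ab{V(H)}}=n^{\ab{V(H)}-o(1)}$ copies of $H$; letting $m\to\infty$ contradicts the previous bound, so $H$ is not $K_3$-abundant.

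To see what is really at stake, observe that a copy of $H$ in such an RS-type graph is nothing but a choice of a ``step'' $x_e\in A$ for each edge $e$ of $H$, together with a base vertex, such that the signed steps around every cycle of $H$ cancel; so copies of $H$ correspond to solutions in $A$ of the system of \emph{cycle equations} of $H$, whose coefficients are the (signed) scalings of $G$. For a generic $H$ this system has about $\ab{A}^{\ab{V(H)}-1}$ solutions in $A$, giving roughly $n^{\ab{V(H)}}$ copies of $H$ --- far too many --- so the construction can work only if the cycle system \emph{collapses} over $A$, i.e.\ only if some integer combination of its equations is a genus-one equation for which $A$ is solution-free. The obstruction to be overcome is that if $G$ is merely tripartite then the induced $3$-coloring of any cycle of the triangle-free graph $H$ (a cycle of length $\ge4$) must repeat one of the three colors, which forces a proper nonempty subset of that cycle's coefficients to sum to zero; hence no \emph{single} cycle equation has genus one, and one is forced to exploit the whole system. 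This already shows $H$ cannot be bipartite and cannot admit a homomorphism to any odd cycle $C_{2\ell+1}$ with $\ell\ge2$ (all such graphs being unconditionally $K_3$-abundant by \cref{thm:odd-cycle-abundant}); equivalently $H$ must be a triangle-free graph with $\chi(H)=3$ whose circular chromatic number exceeds $5/2$ (the Petersen graph is an example of a graph with these chromatic properties).

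Thus the plan for the key step is to choose such a ``rigid'' $H$ --- either a suitable small graph of this kind or one built directly from $E$ --- and to tune the scalings $\alpha,\beta,\gamma$, passing if necessary to a richer layered construction with more than three vertex classes but still containing $n^{2-o(1)}$ edge-disjoint triangles, so that the system of cycle equations of $H$, analyzed over the $E$-solution-free set $A$, is forced to be trivial. The hard part is precisely this rigidity analysis: one must rule out, for the chosen $H$ and $G$, \emph{every} flexible way of routing $H$ through $G$, and then check that the few surviving rigid copies number at most $n^{\ab{V(H)}-c}$. Carrying this out should also determine, by reverse engineering, exactly which genus-one equation $E$ --- and hence which instance of \cref{conj:ruzsa} --- one must invoke.
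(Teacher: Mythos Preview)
Your outline correctly identifies the Ruzsa--Szemer\'edi framework, the correspondence between copies of $H$ in the RS-graph and solutions in $A$ to the system of cycle equations of $H$, and the central obstruction: since $H$ is triangle-free, no \emph{single} cycle equation has genus one, so one must work with the whole system. You also correctly note that Ruzsa's conjecture for a single equation transfers to systems by taking a random integer combination. Up to here you are on the same track as the paper.

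The gap is that you do not actually construct $H$ or prove the ``rigidity analysis'' that you yourself call the hard part. Saying that $H$ should be a triangle-free $3$-chromatic graph not homomorphic to any $C_{2\ell+1}$ with $\ell\ge2$ (e.g.\ circular chromatic number $>5/2$) is necessary but far from sufficient: you give no argument that for \emph{any} such $H$ the cycle system has genus one, and indeed there is no reason to expect this for, say, the Petersen graph with the standard $(1,1,-2)$ weights. Your suggested fallbacks --- ``tune the scalings $\alpha,\beta,\gamma$'' or pass to ``a richer layered construction with more than three vertex classes'' --- are not developed and the latter would seriously complicate the ``far from triangle-free'' side. The paper resolves this by keeping the plain tripartite RS-graph with weights $(1,1,-2)$ and instead choosing $H$ to be highly pseudorandom: a random tripartite graph of density $n^{-3/4}$ with one edge deleted from each triangle. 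The key combinatorial notion is that of a \emph{tagged cycle} (a cycle with exactly one black edge, or two adjacent black edges crossing two different part-pairs); if every nontrivial $2$-coloring of $E(H)$ contains a tagged cycle, then the cycle system has genus one. The paper then proves, via an expansion/Ramsey argument using the pseudorandomness of $H$, that such a coloring always contains a tagged cycle. None of these ingredients --- the tagged-cycle criterion, the pseudorandom construction of $H$, or the Ramsey argument forcing a tagged cycle --- appear in your proposal, and without them the plan remains a description of what needs to be done rather than a proof.
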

The proof of \cref{thm:main-non-abundant} is based on Ruzsa and Szemer\'edi's proof \cite{RS} that super-polynomial bounds are necessary in the triangle removal lemma (i.e.\ that $K_3$ itself is not $K_3$-abundant), but with several important differences. To explain the idea of the proof, let us briefly recall Ruzsa and Szemer\'edi's construction. Given an integer $m$ and a set $R \subseteq [m]$, they define a tripartite graph $\Gamma$ such that triangles in $\Gamma$ correspond to solutions in $R$ to the equation $x+z=2y$; note that solutions to this equation are three-term arithmetic progressions in $R$, and that trivial solutions are trivial arithmetic progressions. The upshot of this construction is twofold. First, if $R$ contains no non-trivial arithmetic progressions, then $\Gamma$ has few triangles, and second, if $R$ is large, then $\Gamma$ is far from triangle-free (as the many trivial arithmetic progressions in $R$ yield many edge-disjoint triangles in $\Gamma$). To finish the proof, Ruzsa and Szemer\'edi use Behrend's \cite{Behrend} construction of a set $R\subseteq [m]$ with no non-trivial arithmetic progressions and of size $\ab R \geq m^{1-o(1)}$. Using this set $R$ in the construction described above, one obtains a graph $\Gamma$ with at most $\delta n^3$ triangles that is $\varepsilon$-far from triangle-free, where $\delta$ is smaller than any fixed power of $\varepsilon$, because $\ab R$ is larger than any fixed power of $m$ less than $1$.

For our proof of \cref{thm:main-non-abundant}, we use the same construction to build a graph $\Gamma$ out of any $R \subseteq [m]$. For the same reason as above, if $R$ is large, then $\Gamma$ is far from triangle-free. We would now like to show that $\Gamma$ has few copies of $H$, for some triangle-free tripartite graph $H$. As before, we can use the structure of $\Gamma$ to parameterize copies of $H$ in $\Gamma$: there is a correspondence between copies of $H$ in $\Gamma$ and solutions in $R$ to a certain system of equations $S$ arising from the cycles in $H$, whose variables are indexed by the edges of $H$. To prove \cref{thm:main-non-abundant}, it suffices to find a set $R \subseteq [m]$ with $\ab R \geq m^{1-o(1)}$ containing no non-trivial solutions to $S$. Unfortunately, as $H$ is triangle-free, this is impossible if we only use a \emph{single} equation from $S$: no single equation in this family has genus one, and therefore the largest set $R \subseteq [m]$ avoiding non-trivial solutions to any single equation from $S$ has size $\ab R = O(\sqrt m)$ \cite[Theorem 3.6]{Ruzsa}. The key idea in the proof is that if $H$ is sufficiently ``complicated'', then the system of equations $S$, generated from the cycle space of $H$, does have genus one\footnote{We formally define what it means for a system of equations (rather than one equation) to have genus one in \cref{sec:non-abundant-proof}.}, and thus \cref{conj:ruzsa} implies that there exists a set $R$ satisfying our desired properties.

The heart of the proof, then, is showing that if $H$ is an appropriately chosen tripartite triangle-free graph, the set of equations arising from its cycles has genus one. Since the variables in the equations in $S$ correspond to the edges of $H$, a set which witnesses that $S$ does not have genus one can be viewed as a two-coloring of the edges of $H$ with a certain properties. We now use a Ramsey-theoretic argument to show that if $H$ satisfies a number of carefully chosen pseudorandomness conditions, such a coloring cannot exist. To conclude the proof, it suffices to find a tripartite triangle-free graph that is pseudorandom in this sense; we do this by picking a random tripartite graph of an appropriate density and deleting one edge from each triangle.

\subsection{An application of Theorem \ref{thm:odd-cycle-abundant} to property testing}

Recall that an $n$-vertex graph $G$ is \emph{$\varepsilon$-far} from satisfying a graph property $\mathcal P$ if one must add or delete at least $\varepsilon n^2$ edges to $G$ in order to create a graph satisfying property $\mathcal P$.

Given a monotone\footnote{A graph property is called \emph{monotone} if it is closed under removal
of vertices and edges.} graph property ${\cal P}$, let $w_{\cal P}(\varepsilon)$ denote the smallest integer so that if $G$ is $\varepsilon$-far
from satisfying ${\cal P}$, then a randomly selected set $X$ of $w_{\cal P}(\varepsilon)$ vertices spans a subgraph not satisfying ${\cal P}$
with probability at least $1/2$. Note that a priori it is not clear that the function $w_{\cal P}(\varepsilon)$ is well-defined for all (or indeed any) ${\cal P}$. In this terminology, if ${\cal P}$ is the property of being triangle-free, then the triangle removal lemma 
implies that $w_{\cal P}(\varepsilon)$ is well-defined, and that in fact $w_{\cal P}(\varepsilon) \leq 1/ \delta(\varepsilon)$.

As discussed above, Ruzsa and Szemer\'edi \cite{RS} proved that the bounds for the triangle removal lemma are not
polynomial in $\varepsilon$. In the notation of the previous paragraph, this means that $w_{\cal P}(\varepsilon)$ is super-polynomial
in $1/\varepsilon$ when $\mathcal P$ is the property of being triangle-free. Alon \cite{Alon} later extended this result to all non-bipartite graphs $H$, and thus in particular
to all odd cycles: if $\mathcal P$ is the property of being $C_{2\ell+1}$-free, then $w_{\mathcal P}(\varepsilon)$ is super-polynomial in $1/\varepsilon$. It is natural to ask at this point what happens if instead of forbidding a single odd cycle, we take
a family of odd cycles $L$ and consider the property of not containing any cycle from $L$. It is not hard to show that Alon's
method can be used to show that $w_{\cal P}(\varepsilon)$ is super-polynomial
in $1/\varepsilon$ for every {\em finite} family of odd cycles. At the other extreme, if we take $L$ to be the family
of all odd cycles, then we have the following influential result of Goldreich, Goldwasser, and Ron \cite{GGR}.

\begin{theorem}[\cite{GGR}]\label{thmggg} 
If $\mathcal P$ is the property of being $2$-colorable, then
$w_{\mathcal P}(\varepsilon) \leq \poly(1/ \varepsilon)$.
\end{theorem}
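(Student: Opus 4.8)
The plan is to establish the sharper statement that if $G$ is $\varepsilon$-far from being $2$-colorable then a uniformly random set $X$ of $\poly(1/\varepsilon)$ vertices induces a non-bipartite subgraph with probability at least $\tfrac12$ --- which is exactly the bound $w_{\mathcal P}(\varepsilon)\le\poly(1/\varepsilon)$ --- following the sampling-plus-domination approach of Goldreich, Goldwasser, and Ron. Assume $n$ is large. Split $X$ into a part $U$ with $|U|=\Theta(\varepsilon^{-1}\log(1/\varepsilon))$ and a part $W$ with $|W|=\Theta(\varepsilon^{-2}\log(1/\varepsilon))$, so that each of $U,W$ is a uniformly random vertex subset of its size; the goal is to show that with probability at least $\tfrac12$ the graph $G[X]$ contains an odd cycle.

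\textbf{Reducing to a dominating $U$.} Call a vertex \emph{heavy} if it has at least $\tfrac{\varepsilon}{8}n$ neighbors in $G$; at most $\tfrac{\varepsilon}{8}n^2$ edges are incident to non-heavy vertices. A fixed heavy vertex has no neighbor in $U$ with probability at most $(1-\tfrac{\varepsilon}{8})^{|U|}$, which for a suitable constant in $|U|$ is at most $\tfrac{\varepsilon}{1000}$; hence by Markov's inequality, with probability at least $0.99$ all but at most $\tfrac{\varepsilon}{10}n$ heavy vertices have a neighbor in $U$. Delete from $G$ every edge incident either to a non-heavy vertex or to a heavy vertex with no neighbor in $U$. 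Fewer than $\tfrac{\varepsilon}{4}n^2$ edges are deleted, so the resulting graph $G'$ is still $\tfrac{3\varepsilon}{4}$-far from bipartite, and by construction every endpoint of an edge of $G'$ has a neighbor (in $G$) inside $U$.

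\textbf{A union bound over colorings of $U$.} For each $\psi\colon U\to\{0,1\}$ define a bipartition of $V(G)$ by placing $v$ into $V_1^{\psi}$ if $v$ has a neighbor (in $G$) inside $\psi^{-1}(0)\cap U$, and into $V_0^{\psi}$ otherwise. Since $G'$ is $\tfrac{3\varepsilon}{4}$-far from bipartite, at least $\tfrac{\varepsilon}{2}n^2$ edges of $G'$ are monochromatic for $(V_0^{\psi},V_1^{\psi})$. Partitioning the vertices of $W$ into $\lfloor|W|/2\rfloor$ disjoint pairs, each pair is essentially a uniform random pair of vertices and hence lies in this fixed edge set with probability at least $\varepsilon$, and these events are essentially independent for large $n$, so the probability that $W$ spans no $\psi$-monochromatic edge of $G'$ is at most $(1-\varepsilon)^{\lfloor|W|/2\rfloor}\le e^{-\varepsilon|W|/3}$. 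As $|W|=\Theta(\varepsilon^{-1}|U|)$, a union bound over the $2^{|U|}$ colorings shows that with probability at least $0.99$, for \emph{every} $\psi$ the set $W$ spans some edge of $G'$ that is monochromatic for $(V_0^{\psi},V_1^{\psi})$.

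\textbf{Deducing non-bipartiteness, and the main obstacle.} Suppose $G[X]$ were bipartite, with a proper $2$-coloring $\chi$, and put $\psi:=\chi|_U$. By the previous step there is an edge $u_1u_2\in E(G')$ with $u_1,u_2\in W$ monochromatic for $(V_0^{\psi},V_1^{\psi})$. If $u_1,u_2\in V_1^{\psi}$, then each $u_i$ has a neighbor $a_i\in\psi^{-1}(0)\cap U\subseteq X$, so $u_ia_i$ and $u_1u_2$ are edges of $G[X]$; since $\chi(a_i)=\psi(a_i)=0$, properness of $\chi$ forces $\chi(u_1)=\chi(u_2)=1$, contradicting $u_1u_2\in E(G[X])$. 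If instead $u_1,u_2\in V_0^{\psi}$, then since each $u_i$ is an endpoint of an edge of $G'$ it has a neighbor in $U$, necessarily in $\psi^{-1}(1)\cap U$, and the symmetric argument forces $\chi(u_1)=\chi(u_2)=0$, again a contradiction. Hence $G[X]$ is non-bipartite whenever the two events above both occur, which happens with probability at least $0.98\ge\tfrac12$, while $|X|=|U|+|W|=\poly(1/\varepsilon)$. The delicate point --- and the reason one obtains a polynomial rather than, say, exponential bound --- is the calibration of the two sample sizes: $U$ must be large enough ($\sim\varepsilon^{-1}\log(1/\varepsilon)$) to dominate almost all heavy vertices, yet small enough that the factor $2^{|U|}$ from the union bound over colorings of $U$ can be absorbed by taking $|W|\sim\varepsilon^{-1}|U|$.
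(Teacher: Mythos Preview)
Your argument is correct and is essentially the original Goldreich--Goldwasser--Ron proof: sample a small set $U$ that dominates almost all heavy vertices, use each $2$-coloring of $U$ to induce a bipartition of $V(G)$, and let a larger sample $W$ witness a violating edge for every such bipartition via a union bound over $2^{|U|}$ colorings. The sizes $|U|=\Theta(\varepsilon^{-1}\log(1/\varepsilon))$ and $|W|=\Theta(\varepsilon^{-1}|U|)$ are calibrated exactly so that the union bound succeeds, and the contradiction in the final step is clean.

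However, this is \emph{not} the proof the paper gives. The paper deduces Theorem~\ref{thmggg} from Theorem~\ref{thm:hierarchy} (with $g(x)=x+2$ and $\ell_1=3$), whose proof is a removal-type argument: one first uses Lemma~\ref{lem:shortest odd cycle} to find some $2k+1\le 4/\varepsilon$ for which $G$ is $\Omega(\varepsilon^2)$-far from $C_{2k+1}$-free, and then applies the new asymmetric removal Lemma~\ref{lem:odd cycles} to conclude that a sample of size $\poly(1/\varepsilon)$ contains a copy of $C_{2k+3}$. The whole point of the paper's derivation is to show that the asymmetric removal lemma is strong enough to recover the polynomial bound via a cycle-counting argument in the spirit of \cite{BESS}, where previously such arguments only gave tower-type bounds. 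Your approach sidesteps the asymmetric removal lemma entirely and treats the property purely as $2$-colorability; as the paper itself remarks, this has the advantage of generalizing immediately to $k$-colorability for all $k$, whereas extending the paper's removal-based route to $k\ge 3$ is left open (see Problem~\ref{prob:K_t-abundant} and the surrounding discussion). Conversely, the paper's route generalizes to arbitrary infinite families $L$ of odd cycles (Theorem~\ref{thm:hierarchy}), which your coloring-based argument does not address.
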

The $\poly(1/\varepsilon)$ bound obtained in \cite{GGR} improved a tower-type bound obtained by
Bollob\'{a}s, Erd\H{o}s, Simonovits and Szemer\'{e}di \cite{BESS}. It is interesting to
compare the proofs in \cite{BESS} and \cite{GGR}. The former proof relied on the fact
that if $G$ is far from being bipartite then $G$ is also far from being $C_{2\ell+1}$-free
where $\ell$ is an integer of order $1/\varepsilon$. They then used the graph removal lemma to show that $G$ must contain many copies of $C_{2\ell+1}$.
The proof in \cite{GGR} used a completely different approach, which crucially relied on interpreting this property as bipartiteness, rather than $\{C_3,C_5,\dots\}$-freeness. In particular, the proof of \cite{GGR} works not only for bipartiteness but for $k$-colorability for any fixed $k$.

It is natural at this point to ask what happens for general infinite families of odd cycles.
We answer this question in Section \ref{sec:odd-cycles}. In particular, our proof shows
that one can prove Theorem \ref{thmggg} using a removal-type argument similar to that of
\cite{BESS}, but with an important twist. After concluding that $G$ is far from being $C_{2\ell+1}$-free,
we do not prove that it has many copies of $C_{2\ell+1}$, but rather that it has many copies of $C_{2\ell+3}$. Thanks to \cref{thm:odd-cycle-abundant}, we may take this ``many'' to only be polynomial in $1/\varepsilon$, rather than super-polynomial as in \cite{BESS}.

It would be very interesting to prove efficient asymmetric removal lemmas like \cref{thm:odd-cycle-abundant} for pairs of graphs of chromatic number larger than $3$, and then use them to prove a version of \cref{thmggg} which holds for $k$-colorability for any fixed $k$. We discuss a version of this problem in the next subsection.

\subsection{Larger cliques and higher chromatic numbers}
There is a natural generalization of the triangle removal lemma, known as the \emph{clique removal lemma}, which states that for any $t \geq 3$, if an $n$-vertex graph is $\varepsilon$-far from $K_t$-free, then it contains at least $\delta n^t$ copies of $K_t$, for some $\delta = \delta(\varepsilon,t)>0$. Similarly to \cref{thm:homomorphism-removal}, the usual proof of the clique removal lemma immediately gives the following asymmetric version. 
\begin{theorem}\label{thm:asymmetric-clique-removal}
	Let $t \geq 3$ and let $H$ be a graph with $\chi(H) \leq t$. If an $n$-vertex graph $G$ is $\varepsilon$-far from $K_t$-free, then $G$ contains at least $\delta n^{\ab{V(H)}}$ copies of $H$, for some $\delta = \delta(\varepsilon,H,t)>0$.
\end{theorem}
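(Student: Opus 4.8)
The plan is to run the standard regularity-based proof of the (symmetric) clique removal lemma while keeping track of the extra information it provides. Set $d := \eps/4$, and let $\eps' = \eps'(\eps,H)>0$ be a parameter chosen small enough that $\eps' < \eps/4$ and that it is permissible in the counting (embedding) lemma of the regularity method for graphs of maximum degree at most $\Delta(H)$ with density threshold $d$. Apply Szemer\'edi's regularity lemma to $G$ with parameter $\eps'$, obtaining an $\eps'$-regular partition $V(G) = V_1 \cupdot \cdots \cupdot V_M$ with $1/\eps' \le M \le T(1/\eps')$ for the relevant tower-type function $T$, and with all parts of essentially equal size, so $\ab{V_i} \ge n/(2M)$ for all $i$. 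Now clean $G$: delete every edge lying inside a part, every edge between a pair $(V_i,V_j)$ that is not $\eps'$-regular, and every edge between a pair of density less than $d$. The number of deleted edges is at most $n^2/(2M) + \eps' n^2 + d n^2 < \eps n^2$, so the cleaned graph $G'$ still contains a copy of $K_t$ (because $G$ is $\eps$-far from $K_t$-free). This copy occupies $t$ \emph{distinct} parts $V_{i_1},\dots,V_{i_t}$ — distinct because $G'$ has no edge inside a part — and by construction every pair $(V_{i_a},V_{i_b})$ with $a\ne b$ is $\eps'$-regular and has density at least $d$ in $G'$.

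Next I would embed $H$ into this ``robust'' $K_t$. Fix a proper $t$-colouring of $H$, i.e.\ a partition $V(H) = W_1 \cupdot \cdots \cupdot W_t$ into independent sets (some possibly empty); this is exactly where — and the only place where — the hypothesis $\chi(H)\le t$ is used. Count the copies of $H$ in $G'$ in which every vertex of $W_j$ is mapped into $V_{i_j}$. Since each $W_j$ is independent in $H$, such an embedding only ever uses edges of $G'$ between parts $V_{i_a}$ and $V_{i_b}$ with $a \ne b$, and every such pair is $\eps'$-regular of density at least $d$; hence the counting lemma applies and produces at least
$$ c(H,d)\prod_{j=1}^{t} \ab{V_{i_j}}^{\ab{W_j}} \ \ge\ c(H,d)\left(\frac{n}{2M}\right)^{\ab{V(H)}} $$
copies of $H$ in $G' \subseteq G$, for some constant $c(H,d)>0$. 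Since $M = M(\eps',\eps)$ and $d = \eps/4$, the right-hand side is at least $\delta\,n^{\ab{V(H)}}$ for a constant $\delta = \delta(\eps,H,t)>0$, which completes the proof.

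I do not expect any genuine obstacle here: the argument is essentially folklore, being the standard proof of the clique removal lemma with $K_t$ replaced by $H$ in the final counting step, and the chromatic-number condition is precisely what makes that replacement legal. The only mild point of care is to fix $\eps'$ small enough for the counting lemma \emph{before} invoking the regularity lemma, so that $M$ (and hence $\delta$) is well-defined; but this poses no difficulty. The resulting bound on $\delta$ is of tower type in $1/\eps$, inherited from the regularity lemma; as the remainder of the paper discusses, improving this to a polynomial dependence on $\eps$ — the analogue of \cref{thm:odd-cycle-abundant} for cliques of size at least $4$ — is a much harder and largely open problem.
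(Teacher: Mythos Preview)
Your proposal is correct and is exactly the ``usual proof of the clique removal lemma'' that the paper alludes to but does not spell out: the paper states \cref{thm:asymmetric-clique-removal} without proof, remarking only that it follows immediately from the standard regularity argument, which is precisely the cleaning-plus-counting proof you have written.
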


Because of this, one can naturally extend the definition of $K_3$-abundance to $K_t$-abundance for any $t \geq 3$. Namely, we say that $H$ is $K_t$-abundant if we may take $\delta = \poly_{H,t}(\varepsilon)$ in \cref{thm:asymmetric-clique-removal}. Note that if $G$ is $\varepsilon$-far from $K_t$-free for some $t \geq 4$, then $G$ is also $\varepsilon$-far from triangle-free. Therefore, if $H$ is $K_3$-abundant, then it is automatically $K_t$-abundant for all $t \geq 4$. This shows that all bipartite graphs, as well as all odd cycles of length at least $5$, are $K_t$-abundant for all $t \geq 4$. 

In order to rule out such examples, it is natural to ask about $K_t$-abundant graphs with chromatic number equal to $t$. As in the case of $K_3$-abundance, one can use the Ruzsa--Szemer\'edi argument to show that if $H$ contains a triangle, then $H$ is not $K_t$-abundant for any $t \geq 3$ (see e.g.\ \cite[Lemma 4.2]{AS} for a proof of a very similar result).

However, once $t \geq 4$, there are other ``simple'' reasons why a graph may be non-$K_t$-abundant. To define these, let $H$ be a graph with $\chi(H)=t$. Given a proper coloring $c:V(H) \to [t]$, let us define a \emph{$c$-increasing cycle} to be a cycle $v_1,\dots,v_s$ in $H$ with $c(v_1)< c(v_2) < \dotsb < c(v_s)$. Note that a $c$-increasing cycle necessarily has length at most $t$. Let us say that $H$ is \emph{increasing-cycle-unavoidable} if, for every proper coloring $c:V(H) \to [t]$, there is a $c$-increasing cycle in $H$. For example, if $H$ contains a triangle, then it is necessarily increasing-cycle-unavoidable.

It again follows from standard techniques (e.g.\ by combining \cite[Theorem 2.3]{Ruzsa} and \cite[Lemma 3.4]{Alon}) that if $H$ has chromatic number $t$ and is increasing-cycle-unavoidable, then $H$ is not $K_t$-abundant. In particular, we again find that any graph containing a triangle is not $K_t$-abundant. However, this also includes other graphs. For example, the Gr\"otzsch graph is the smallest triangle-free $4$-chromatic graph, and one can check (by tedious casework, or with a computer), that the Gr\"otzsch graph is increasing-cycle-unavoidable. Therefore, the Gr\"otzsch graph is an example of a triangle-free $4$-chromatic graph that is not $K_4$-abundant. 

Therefore, to get to a genuinely interesting question, it makes sense to restrict our attention to $t$-chromatic graphs with girth greater than $t$ (as any $c$-increasing cycle has length at most $t$, so there are no such cycles in a graph of girth larger than $t$). We believe that, with appropriate modifications, the technique used to prove \cref{thm:main-non-abundant} shows that for every $t\geq 3$, there exists a $t$-chromatic non-$K_t$-abundant graph with girth greater than $t$, assuming that \cref{conj:ruzsa} holds. Namely, one can obtain such a graph by sampling an appropriate random graph, then deleting all cycles of length at most $t$. 

Thus, we have many ways of finding non-$K_t$-abundant graphs, and no examples of $t$-chromatic graphs that \emph{are} $K_t$-abundant for $t \geq 4$. This motivates the following question.
\begin{problem}\label{prob:K_t-abundant}
	Let $t \geq 4$. Does there exist a $K_t$-abundant graph with chromatic number $t$?
\end{problem}
The simplest $4$-chromatic graph whose status is unknown is the Brinkmann graph; this is a $21$-vertex graph with chromatic number $4$ and girth $5$, so the techniques we have cannot be used to rule out its $K_4$-abundance.

\paragraph{Paper organization:} The rest of this paper is organized as follows. In \cref{sec:odd-cycles}, we prove \cref{thm:odd-cycle-abundant} and use it to deduce \cref{thmggg}. In \cref{sec:non-abundant-proof}, we prove \cref{thm:main-non-abundant}, apart from the proof that there exists a graph satisfying certain pseudorandomness assumptions.
In \cref{sec:random-graphs}, we prove that an appropriate random graph satisfies these assumptions. We end in \cref{sec:conclusion} with some concluding remarks.

\section{Many edge-disjoint \texorpdfstring{$(2k+1)$}{(2k+1)}-cycles imply many \texorpdfstring{$(2\ell+1)$}{(2l+1)}-cycles }\label{sec:odd-cycles}

In this section, we prove \cref{thm:odd-cycle-abundant}. We actually prove a stronger sampling version, saying that a sample of size $\poly(\ell/\varepsilon)$ contains a copy of $C_{2\ell+1}$ with high probability. 

\begin{lemma}\label{lem:odd cycles}
	Let $1 \leq k < \ell$, let $\varepsilon > 0$ and let $G$ be a graph on $n \geq 200\ell k^2/\varepsilon^2$ vertices containing a collection $\mathcal{C}$ of $\varepsilon n^2$ edge-disjoint copies of $C_{2k+1}$. Then a sample of $\frac{100k^2(2\ell+1)\log(10\ell)}{\varepsilon^2}$ vertices of $G$ (taken uniformly and independently) contains a copy of $C_{2\ell+1}$ with probability at least $\frac{2}{3}$.
\end{lemma}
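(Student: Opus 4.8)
The plan is to find, inside $G$, many copies of $C_{2\ell+1}$ in a "robust" way — specifically, a large family of copies of $C_{2\ell+1}$ that is spread out enough that a random vertex sample of the stated size hits one of them whp. The natural mechanism is to take a copy of $C_{2k+1}$ from the collection $\mathcal{C}$ and stretch it to a copy of $C_{2\ell+1}$ by replacing one edge $uv$ of the $(2k+1)$-cycle with a path of length $2(\ell-k)+1$ from $u$ to $v$. Since $2k+1 + 2(\ell-k)+1 - 1 = 2\ell+1$, this does produce a $(2\ell+1)$-cycle, provided the path is vertex-disjoint from the rest of the short cycle. So the first step is: for a typical edge $uv$ lying in one of the cycles of $\mathcal{C}$, show that there are many $u$–$v$ walks (hence paths, after cleaning up) of length $2(\ell-k)+1$ in $G$.

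The key input for the second step is that $G$ has many edges — since $\mathcal C$ consists of $\varepsilon n^2$ edge-disjoint copies of $C_{2k+1}$, we have $e(G) \geq (2k+1)\varepsilon n^2 \geq 2\varepsilon n^2$. A graph with $\Omega(\varepsilon n^2)$ edges has average degree $\Omega(\varepsilon n)$, and more importantly, between most pairs of its high-degree vertices there are $\Omega_\ell((\varepsilon n)^{2(\ell-k)})$ walks of length $2(\ell-k)+1$; one proves this by a standard iterated Cauchy–Schwarz / convexity argument (the number of walks of a given length between a random pair of vertices is controlled by moments of the degree sequence). The cleaner route is probably to first pass to a subgraph $G'$ of $G$ with minimum degree $\geq \varepsilon n$ (delete low-degree vertices; since $n$ is large compared to $1/\varepsilon$, this destroys at most, say, half the cycles in $\mathcal{C}$, leaving $\geq \varepsilon n^2/2$ edge-disjoint $(2k+1)$-cycles), and then observe that in a graph of minimum degree $\delta$, between any two vertices there are at least roughly $(\delta - O(\ell))^{2(\ell-k)}$ walks of length $2(\ell-k)+1$, and all but a negligible fraction of these are paths avoiding any fixed set of $2k-1$ additional vertices.

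Putting these together: start from the (surviving) collection of $\geq \varepsilon n^2/2$ edge-disjoint $(2k+1)$-cycles in $G'$. Each such cycle $C$ has $2k+1$ edges; fix one edge $uv \in C$. The other $2k-1$ vertices of $C$ form a small obstruction set, and by the previous paragraph there are $\Omega_\ell((\varepsilon n)^{2(\ell-k)})$ paths of length $2(\ell-k)+1$ from $u$ to $v$ in $G'$ avoiding those vertices; each yields a distinct copy of $C_{2\ell+1}$. Now I want to sample vertices and hit one of these. The right way to phrase "robustness" is: consider the bipartite-like incidence structure where we pick the short cycle and the stretching path; one shows that for a uniformly random ordered tuple of $2\ell+1$ vertices, the probability it forms one of these constructed $C_{2\ell+1}$'s is $\Omega_\ell(\varepsilon^{2\ell+1})$ (roughly: $\varepsilon n^2$ choices of cycle-edge, times $(\varepsilon n)^{2(\ell-k)}$ choices of path, divided by $n^{2\ell+1}$, with the remaining $2k-1$ cycle-vertices contributing the missing powers of $n$ — here one must be slightly careful to count each cycle, not each edge, and to track the exact exponent, which is where the $\varepsilon^{4\ell+2}$ in Theorem 1.5 comes from after accounting for the sampling overhead). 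Feeding this density bound into a union/second-moment argument over samples of size $s = \Theta(k^2 \ell \log \ell / \varepsilon^2)$ gives a copy with probability $\geq 2/3$: the number of $(2\ell+1)$-subsets of the sample is $\binom{s}{2\ell+1} = \Theta_\ell(s^{2\ell+1})$, and with $s \gg 1/\varepsilon^2$ (note the log factor absorbs the failure probability) the expected number of hit copies is large, while disjointness-type bounds on the copies keep the variance controlled.

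The main obstacle I anticipate is the last step — converting "$G$ contains $\Omega_\ell(\varepsilon^{2\ell+1}) n^{2\ell+1}$ copies of $C_{2\ell+1}$" into a statement about a small random \emph{vertex} sample, with the clean bound $s = \frac{100 k^2(2\ell+1)\log(10\ell)}{\varepsilon^2}$. A naive count of $C_{2\ell+1}$'s is not enough, because the copies could cluster on few vertices; one genuinely needs the copies to be "spread", i.e.\ few copies through any fixed vertex or pair. This is exactly guaranteed by the construction: the $\varepsilon n^2$ short cycles are edge-disjoint, so few of them pass through any vertex, and the stretching paths live in a minimum-degree-$\varepsilon n$ graph, so the whole family of $C_{2\ell+1}$'s inherits a supersaturation/spreadness property. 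Quantifying this spreadness precisely, and then running the sampling argument with the right constants, is the technical heart; the rest is bookkeeping with Cauchy–Schwarz and the pigeonhole cleanup of walks into paths.
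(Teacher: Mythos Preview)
Your proposed construction has a genuine counting gap. You build copies of $C_{2\ell+1}$ by choosing a cycle $C\in\mathcal C$, an edge $uv$ of $C$, and then a $u$--$v$ path of length $2(\ell-k)+1$ avoiding the rest of $C$. But once $C$ is chosen from $\mathcal C$, all $2k+1$ of its vertices are determined; the only remaining freedom is in the $2(\ell-k)$ internal vertices of the path. Hence the number of $C_{2\ell+1}$'s you produce is at most $\ab{\mathcal C}\cdot (2k+1)\cdot n^{2(\ell-k)} = O_\ell(\varepsilon\, n^{2\ell-2k+2})$, which is an $O_\ell(\varepsilon/n^{2k-1})$ fraction of all ordered $(2\ell+1)$-tuples. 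No sample of size independent of $n$ can land on one of these with probability bounded away from zero, so the sampling step cannot succeed. Your parenthetical that ``the remaining $2k-1$ cycle-vertices contribut[e] the missing powers of $n$'' is precisely the mistake: those vertices are not free parameters once $C\in\mathcal C$ is fixed. A secondary problem is that the stretching step itself is unjustified: minimum degree $\delta$ does not force $\Omega(\delta^{t-1})$ walks of length $t$ between two \emph{prescribed} adjacent vertices (take two cliques of size $n/2$ joined by a perfect matching and look at a matching edge), so even before the count you have not established that the required paths exist.

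The paper's argument avoids both problems by a different mechanism. It does not try to stretch a fixed cycle from $\mathcal C$. Instead it first samples a single vertex $v_0$ and then uses the many cycles of $\mathcal C$ that meet $N(v_0)$ but avoid $v_0$ to build ``layers'' $U_1,\dots,U_{2k+1}$, each of size $\Omega(\varepsilon^2 n/k^2)$, with $U_1\subseteq N(v_0)$ and with every vertex of $U_j$ having $\Omega(\varepsilon^2 n/k^2)$ neighbours in $U_{j\pm 1}$. One then fixes a homomorphism $\varphi\colon P_{2\ell-1}\to C_{2k+1}$ with $\varphi(1)=\varphi(2\ell)=1$ (possible since $2k+1$ is odd and $k<\ell$) and greedily samples $u_i\in S_i\cap U_{\varphi(i)}$ one step at a time; each step succeeds with probability at least $1-\tfrac{1}{10\ell}$ because the current vertex has many neighbours in the next layer, and since $u_1,u_{2\ell}\in U_1\subseteq N(v_0)$ the resulting path closes through $v_0$ into a $C_{2\ell+1}$. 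This greedy sampling is what delivers the $O(k^2\ell\log\ell/\varepsilon^2)$ sample size directly, with no second-moment or spreadness analysis required.
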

Note that, as discussed above, the properties of being $\varepsilon$-far from $C_{2k+1}$-free and having $\varepsilon n^2$ edge-disjoint copies of $C_{2k+1}$ are equivalent up to a constant factor.
\begin{proof}[Proof of \cref{lem:odd cycles}]
	There exists a collection $\mathcal{C}_0 \subseteq \mathcal{C}$ such that $\ab{\mathcal{C}_0} \geq \varepsilon n^2/2$ and each vertex $v \in V(G)$ belongs to either $0$ or at least $\varepsilon n/2$ of the cycles in $\mathcal{C}_0$. Indeed, to obtain $\mathcal{C}_0$, we repeatedly delete from $\mathcal{C}$ all cycles containing a vertex $v$ which belongs to fewer than $\varepsilon n/2$ of the cycles in $\mathcal{C}$ (without changing the graph). The set of cycles left at the end is $\mathcal{C}_0$. In this process, we delete at most $\varepsilon n^2/2$ cycles altogether (because each vertex is contained in at most $n$ cycles from $\mathcal{C}$); hence $\ab{\mathcal{C}_0} \geq \varepsilon n^2/2$.
	Let $V_0$ be the set of vertices contained in at least $\varepsilon n/2$ cycles from $\mathcal{C}_0$. Then $|V_0| \geq \varepsilon n$ because the cycles containing a given $v \in V_0$ are edge-disjoint and contained in $V_0$.
	
	Set $q := \frac{100k^2\log(10\ell)}{\varepsilon^2}$. We sample $2\ell+1$ sets $S_0,\dots,S_{2\ell}$ of size $q$ each, where each $S_i$ contains $q$ vertices sampled uniformly at random and independently with repetition\footnote{i.e.\ it is possible that $\ab{S_i}<q$ for some $i$, as we may sample the same vertex multiple times.}. The probability that $S_0 \cap V_0 = \varnothing$ is at most $(1 - \varepsilon)^{q} \leq e^{-\varepsilon q} \leq \frac{1}{10}$. From now on, we assume that $S_0 \cap V_0 \neq \varnothing$ and fix $v_0 \in S_0 \cap V_0$.
	Let $N$ be the set of vertices $u$ such that $v_0u \in E(C)$ for some $C \in \mathcal{C}_0$. Then $|N| \geq \varepsilon n$.
	Let $\mathcal{C}(v_0)$ be the set of cycles $C \in \mathcal{C}_0$ such that
	$V(C) \cap N \neq \varnothing$ and $v_0 \notin V(C)$. The number of cycles $C \in \mathcal{C}_0$ intersecting $N$ is at least $\varepsilon n/2 \cdot |N| /(2k+1) \geq \varepsilon^2 n^2/(4k+2)$, and the number of cycles containing $v_0$ is at most $n$. Hence,
	$|\mathcal{C}(v_0)| \geq \varepsilon^2 n^2/(4k+2) - n \geq \varepsilon^2 n^2/(7k)$, using our assumption that $n \geq 200 \ell k^2/\varepsilon^2$.
	
	We denote by $C_{2k+1}$ the labeled cycle on vertices $1,\dots,2k+1$. For each $C \in \mathcal{C}(v_0)$, fix an isomorphism $f_C : C_{2k+1} \rightarrow C$ mapping $1$ to a vertex in $V(C) \cap N$.
	For a vertex $u$ and $1 \leq j \leq 2k+1$, denote by $d_j(u)$ the number of cycles $C \in \mathcal{C}(v_0)$ with $f_C(j) = u$.
	As long as there is $1 \leq j \leq 2k+1$ and a vertex $u$ with
	$0 < d_j(u) < \varepsilon^2 n/(50k^2)$, then
	delete from $\mathcal{C}(v_0)$ all cycles $C$ with $f_C(j) = u$ (without changing the graph). In this process, we delete at most
	$(2k+1) \cdot n \cdot \varepsilon^2 n/(50k^2) \leq \varepsilon^2n^2/(14k)$ cycles. Let $\mathcal{C}^*(v_0)$ be the set of cycles left at the end of the process. Then $|\mathcal{C}^*(v_0)| \geq |\mathcal{C}(v_0)| - \varepsilon^2n^2/(14k) \geq \varepsilon^2n^2/(14k)$. At the end, we have that $d_j(u) = 0$ or $d_j(u) \geq \varepsilon^2 n/(50k^2)$ for every vertex $u$ and every $1 \leq j \leq 2k+1$. Let $U_j$ be the set of vertices $u$ with $d_j(u) \geq \varepsilon^2 n/(50k^2)$. Observe that each $u \in U_j$ has at least $\varepsilon^2 n/(50k^2)$ neighbors in $U_{j-1}$ and $U_{j+1}$, with indices taken modulo $2k+1$.
	In particular, $|U_j| \geq \varepsilon^2 n/(50k^2)$ for every $1 \leq j \leq 2k+1$.
	Also, $U_1 \subseteq N \subseteq N(v_0)$ by definition, where $N(v_0)$ denotes the neighborhood of $v_0$.
	
	Let $P_{2\ell-1}$ be the path with vertices $1,\dots,2\ell$. Fix a homomorphism $\varphi : P_{2\ell-1} \rightarrow C_{2k+1}$ such that $\varphi(1) = \varphi(2\ell) = 1$. This is possible because $2k+1$ is odd and $k < \ell$.
	We claim that with probability at least $\frac{4}{5}$, there are vertices $u_i \in S_i$, $1 \leq i \leq 2\ell$, such that $u_1,\dots,u_{2\ell}$ is a path and $u_i \in U_{\varphi(i)}$. Observe that if this happens then $v_0,u_1,\dots,u_{2\ell}$ is a copy of $C_{2\ell+1}$ because $u_1,u_{2\ell} \in U_1 \subseteq N(v_0)$.
	First, since $|U_1| \geq \varepsilon^2 n/(50k^2)$, the probability that $S_1$ contains no vertex of $U_1$ is at most $(1 - \frac{\varepsilon^2}{50k^2})^q \leq \frac{1}{10\ell}$. So suppose that there is $u_1 \in S_1 \cap U_1$. For $2 \leq i \leq 2\ell$, suppose that we already found $u_1,\dots,u_{i-1}$; in particular, $u_{i-1} \in U_{\varphi(i-1)}$. We saw that $u_{i-1}$ has at least $\varepsilon^2 n/(50k^2)$ neighbors in $U_{\varphi(i)}$. Out of these neighbors, at most $2\ell-1$ are on the path $u_1,\dots,u_{i-1}$. Hence, at least $\varepsilon^2 n/(50k^2) - 2\ell \geq \varepsilon^2n/(100k^2)$ are not in $\{u_1,\dots,u_{i-1}\}$.
	The probability that $S_i$ contains none of these neighbors is at most $(1 - \frac{\varepsilon^2}{100k^2})^q \leq \frac{1}{10\ell}$. So suppose that there is $u_i \in S_i \cap U_{\varphi(i)}$ and continue the process. The probability that this process fails to produce $u_1,\dots,u_{2\ell}$ is at most $2\ell \cdot \frac{1}{10\ell} = \frac{1}{5}$, as required. In total, the probability that $S_0 \cup \dots \cup S_{2\ell+1}$ contains no copy of $C_{2\ell+1}$ is at most $\frac{1}{10} + \frac{1}{5} < \frac{1}{3}$. This completes the proof.
\end{proof}
This immediately implies \cref{thm:odd-cycle-abundant}, which we restate as the following corollary.
\begin{corollary}
	Let $1 \leq k < \ell$, let $\varepsilon > 0$ and let $G$ be a graph on $n \geq 200\ell k^2/\varepsilon^2$ vertices with $\varepsilon n^2$ edge-disjoint copies of $C_{2k+1}$. Then $G$ has $\Omega_\ell(\varepsilon^{4\ell+2})n^{2\ell+1}$ copies of $C_{2\ell+1}$.
\end{corollary}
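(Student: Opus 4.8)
The plan is to derive the corollary from \cref{lem:odd cycles} by a standard second-moment/counting trick: a statement about sampling automatically yields a statement about the \emph{number} of copies. First I would set $q := \frac{100k^2(2\ell+1)\log(10\ell)}{\varepsilon^2}$, the sample size from the lemma, and recall that the hypothesis ``$\varepsilon n^2$ edge-disjoint copies of $C_{2k+1}$'' is exactly what the lemma needs (for $n \geq 200\ell k^2/\varepsilon^2$; for smaller $n$ the claimed bound $\Omega_\ell(\varepsilon^{4\ell+2})n^{2\ell+1}$ is at most a constant, so it suffices to guarantee one copy, which we handle separately or absorb into the $\Omega_\ell$). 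The key observation is: if we pick $q$ vertices uniformly and independently with repetition, and if the graph had \emph{fewer} than $t$ copies of $C_{2\ell+1}$, then the probability that our sample contains a copy of $C_{2\ell+1}$ is at most the expected number of copies of $C_{2\ell+1}$ fully contained in the sampled multiset, which by linearity of expectation and a union bound over the (at most) $\binom{q}{2\ell+1}$ ordered choices of which sampled coordinates form the cycle is at most $\binom{q}{2\ell+1}\cdot\frac{(2\ell+1)!\,t}{n^{2\ell+1}} \le q^{2\ell+1}\cdot\frac{t}{n^{2\ell+1}}$.

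Combining this with the lemma's conclusion that the sample contains a copy with probability at least $2/3$, we get $q^{2\ell+1}\cdot\frac{t}{n^{2\ell+1}} \ge 2/3$, whenever $t$ is taken to be the actual number of copies of $C_{2\ell+1}$ in $G$; rearranging,
\[
t \;\ge\; \frac{2}{3}\cdot\frac{n^{2\ell+1}}{q^{2\ell+1}} \;=\; \frac{2}{3}\left(\frac{\varepsilon^2}{100k^2(2\ell+1)\log(10\ell)}\right)^{2\ell+1} n^{2\ell+1}.
\]
Since $k < \ell$, the factor $1/k^{2(2\ell+1)}$ is bounded below by a constant depending only on $\ell$ (indeed $k \le \ell-1$), so this is of the form $c_\ell\,\varepsilon^{4\ell+2}\,n^{2\ell+1}$ with $c_\ell>0$ depending only on $\ell$, which is exactly the claimed bound. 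I should double-check the direction of the inequality in the union-bound step: we want an \emph{upper} bound on the probability of seeing a copy in the sample in terms of the number of copies, so that a small number of copies forces a small sampling probability; contraposing the lemma then forces the number of copies to be large. This is the logically delicate point but it is routine.

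The main (and only real) obstacle is bookkeeping: making sure the sampling model in the lemma (which samples $2\ell+1$ blocks $S_0,\dots,S_{2\ell}$ of size $q$ each, with repetition) is correctly translated into a single multiset of $(2\ell+1)q$ independent uniform samples, so that the union bound over subsets of sampled coordinates is valid, and being careful that ``contains a copy of $C_{2\ell+1}$'' means the subgraph induced on the sampled vertices has such a cycle --- which is implied by, but formally weaker than, ``some $2\ell+1$ of the sampled coordinates form a cycle in $G$.'' Since the latter is what the crude union bound counts, and it dominates the former, the inequality goes the right way. Finally, for $n < 200\ell k^2/\varepsilon^2$ one notes $\varepsilon^{4\ell+2}n^{2\ell+1} = O_\ell(1)$ while $G$ trivially contains at least one copy of $C_{2\ell+1}$ (being $\varepsilon$-far from $C_{2k+1}$-free and $k<\ell$, it is in particular non-bipartite with short odd girth, hence contains \emph{some} odd cycle; one can also simply restrict attention to $n$ above the threshold since the statement is only asserted there via the $\Omega_\ell$), so the bound holds with room to spare after adjusting the constant $c_\ell$.
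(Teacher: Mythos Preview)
Your proposal is correct and takes essentially the same approach as the paper: upper-bound the probability that a random sample of size $q$ contains a copy of $C_{2\ell+1}$ by $M\cdot(q/n)^{2\ell+1}$ (where $M$ is the number of copies), compare with the lower bound $2/3$ from \cref{lem:odd cycles}, and solve for $M$. Your discussion of the regime $n < 200\ell k^2/\varepsilon^2$ is unnecessary since the corollary explicitly assumes $n$ is above this threshold, and the bookkeeping around the union bound could be compressed, but the argument is sound.
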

\begin{proof}
	Let $M$ denote the number of copies of $C_{2\ell+1}$ in $G$. The probability that a sample of size $q$ contains a  copy of $C_{2\ell+1}$ is at most $M \cdot (q/n)^{2\ell+1}$. By  \cref{lem:odd cycles}, this probability is at least $2/3$ for $q = O_\ell(\varepsilon^{-2})$. The result follows.
\end{proof}

We now turn to removal bounds for infinite families of odd cycles.
For a sequence of positive integers $L$, let $w_L(\varepsilon)$ be the smallest integer $q$ such that if a graph $G$ on $n \geq n_0(\varepsilon)$ vertices is $\varepsilon$-far from being $\{C_{\ell} : \ell \in L\}$-free, then with probability at least $2/3$, a sample of $q$ vertices from $G$ contains a copy of $C_{\ell}$ for some $\ell \in L$. A result of \cite{GS} gave tight bounds on $w_L(\varepsilon)$ as a function of the growth-rate of the sequence $L$. We state a variant of this result as follows.

\begin{theorem}\label{thm:hierarchy}
	Let $g : \mathbb{N}_{odd} \rightarrow \mathbb{N}_{odd}$ be an increasing function satisfying $g(x) > x$. Let $\ell_1 \geq 3$ be an odd number and define a sequence $(\ell_i)_{i \geq 1}$ inductively by setting $\ell_{i+1} = g(\ell_i)$. For $L = \{\ell_i : i \geq 1\}$, we have
	$w_{L}(\varepsilon) \leq O(\varepsilon^{-8}) \cdot g(4/\varepsilon) \cdot \log(g(4/\varepsilon))$ for every $0<\varepsilon \leq \frac{1}{4\ell_1}$. 
\end{theorem}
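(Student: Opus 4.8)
The plan is to deduce \cref{thm:hierarchy} from \cref{lem:odd cycles} by first showing that a graph which is $\varepsilon$-far from $\{C_\ell:\ell\in L\}$-free contains $\mathrm{poly}(\varepsilon)\cdot n^2$ edge-disjoint copies of $C_{2k+1}$ for some odd $2k+1\le 4/\varepsilon$, and then feeding these into \cref{lem:odd cycles} with target length the smallest element of $L$ exceeding $2k+1$. Since every element of $L$ is odd, any bipartite graph is $\{C_\ell:\ell\in L\}$-free, so if $G$ is $\varepsilon$-far from $\{C_\ell:\ell\in L\}$-free then it is $\varepsilon$-far from bipartite; this is all we will use of the hypothesis. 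The combinatorial heart is then the following elementary fact: if a graph $H$ on $m$ vertices has minimum degree at least $\delta m$ (with $\delta\le \tfrac23$, say) and contains an odd cycle, then it contains an odd cycle of length at most $2/\delta$.

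To prove this fact I would take a shortest odd cycle $C$ in $H$ and argue that (i) $C$ has no chords, since a chord splits $C$ into two arcs of lengths summing to the odd number $\ab{V(C)}$, one of which is even and, together with the chord, forms a strictly shorter odd cycle; and (ii) when $\ab{V(C)}\ge 5$ every vertex $w\notin V(C)$ has at most two neighbours on $C$: three neighbours $x,y,z$ of $w$ on $C$ divide $C$ into three arcs whose lengths sum to $\ab{V(C)}$ (odd), and a short case analysis over these arcs produces either a triangle or an odd cycle through $w$ shorter than $C$. Given (i) and (ii), summing $\deg_H$ over $V(C)$ yields $\delta m\cdot\ab{V(C)}\le \sum_{v\in V(C)}\deg_H(v)\le 2\ab{V(C)}+2(m-\ab{V(C)})\le 2m$, so $\ab{V(C)}\le 2/\delta$ (the case $\ab{V(C)}=3$ being trivial).

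Next I would show that $G$ being $\varepsilon$-far from bipartite forces many short edge-disjoint odd cycles. Let $\mathcal C$ be a maximal family of edge-disjoint odd cycles in $G$ of length at most $4/\varepsilon$, and suppose $\ab{\mathcal C}<\tfrac{\varepsilon^2}{16}n^2$. Deleting the fewer than $\tfrac\varepsilon4 n^2$ edges of $\mathcal C$ and then iteratively deleting all vertices of current degree $<\tfrac\varepsilon2 n$ (costing a further $<\tfrac\varepsilon2 n^2$ edges) leaves a graph with minimum degree at least $\tfrac\varepsilon2$ times its order and with no odd cycle of length $\le 4/\varepsilon$ (by maximality of $\mathcal C$); by the fact above with $\delta=\varepsilon/2$ this graph is bipartite, so $G$ is made bipartite by deleting $<\varepsilon n^2$ edges, a contradiction. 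Hence $\ab{\mathcal C}\ge\tfrac{\varepsilon^2}{16}n^2$, and since the cycles in $\mathcal C$ take fewer than $2/\varepsilon$ distinct odd lengths, pigeonhole gives a single odd $2k+1\le 4/\varepsilon$ with $1\le k<2/\varepsilon$ for which $G$ contains at least $\tfrac{\varepsilon^3}{32}n^2$ edge-disjoint copies of $C_{2k+1}$. Letting $2\ell+1$ be the smallest element of $L$ exceeding $2k+1$ (so $\ell>k$, and, as $g$ is increasing and $2k+1\le 4/\varepsilon$, also $2\ell+1\le g(4/\varepsilon)$ — here I assume $\varepsilon$ small enough that $\ell_1\le 4/\varepsilon$, which may be assumed, the claim being easily handled directly otherwise), I then apply \cref{lem:odd cycles} to $G$ with this $k,\ell$ and with its parameter $\varepsilon$ set to $\varepsilon^3/32$; its hypothesis $n\ge 200\ell k^2/(\varepsilon^3/32)^2$ holds once $n\ge n_0(\varepsilon):=O(\varepsilon^{-8}g(4/\varepsilon))$. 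The conclusion is that a uniform sample of $\tfrac{100 k^2(2\ell+1)\log(10\ell)}{(\varepsilon^3/32)^2}$ vertices contains a copy of $C_{2\ell+1}$, hence a copy of $C_\ell$ for some $\ell\in L$, with probability at least $\tfrac23$; bounding $k<2/\varepsilon$, $2\ell+1\le g(4/\varepsilon)$ and $\log(10\ell)=O(\log g(4/\varepsilon))$ shows this sample size is $O(\varepsilon^{-8})\cdot g(4/\varepsilon)\cdot\log(g(4/\varepsilon))$, which is the desired bound on $w_L(\varepsilon)$.

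The step I expect to require the most care is the "minimum degree implies short odd cycle" fact, and within it, part (ii): showing that an external vertex has at most two neighbours on a shortest odd cycle of length $\ge 5$ needs a careful (if short) case analysis of the three arcs, since a naive argument only bounds the number of such neighbours by an independent set of size $\lfloor\ab{V(C)}/2\rfloor$. A secondary point of care is tracking the constants through the two rounds of deletion and the pigeonhole step so that the final exponent of $\varepsilon$ comes out to exactly $8$.
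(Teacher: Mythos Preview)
Your proposal is correct and follows essentially the same approach as the paper: reduce to the short-odd-cycle lemma (the paper's \cref{lem:shortest odd cycle}, which you reprove), find $\Theta(\varepsilon^3)n^2$ edge-disjoint copies of some $C_{2k+1}$ with $2k+1\le 4/\varepsilon$, and feed these into \cref{lem:odd cycles} with target the next element of $L$. The only cosmetic difference is that the paper argues ``$G$ is $\varepsilon^2/4$-far from $C_{2k+1}$-free for some $k$'' (otherwise one could kill all short odd cycles cheaply) and then converts this to edge-disjoint copies, whereas you take a maximal edge-disjoint family of short odd cycles and pigeonhole on the length; both routes yield the same $\varepsilon^3$ density and the same $O(\varepsilon^{-8})$ exponent.
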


Note that \cref{thmggg} follows immediately from \cref{thm:hierarchy}, by setting $g(x)=x+2$ and $\ell_1=3$, so that $L$ consists of all odd integers greater than $1$. This implies that if $G$ is $\varepsilon$-far from being bipartite then a sample of size $\poly(1/\varepsilon)$ contains an odd cycle with probability at least $2/3$. We note that the dependence on $\varepsilon$ that we get is not tight; indeed, it is known \cite{AK} that a sample of $\tilde{O}(1/\varepsilon)$ suffices.

Here we give a new simpler proof of \cref{thm:hierarchy}. We need the following
easy fact, observed already in \cite{BESS}. For completeness, we give a proof.
\begin{lemma}\label{lem:shortest odd cycle}
	For every $0 < \varepsilon \leq 1/2$, every graph $G$ which is $\varepsilon$-far from being bipartite contains an odd cycle of length at most
	$2/\varepsilon$.
\end{lemma}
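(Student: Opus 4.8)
The plan is to prove the contrapositive in a quantitatively sharp form: if $G$ has no odd cycle of length at most $2/\varepsilon$, then $G$ can be made bipartite by deleting fewer than $\varepsilon n^2$ edges. The engine is a minimum-degree reduction followed by a short double-counting argument anchored at a shortest odd cycle.

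First I would reduce to the case of large minimum degree. Suppose some vertex $v$ has $\deg_G(v)\le 2\varepsilon(n-1)$. Deleting $v$ cannot decrease the distance to bipartiteness by more than $\deg_G(v)$ (adding the edges at $v$ to an optimal bipartization of $G-v$ bipartizes $G$), and since $\varepsilon n^2-2\varepsilon(n-1)>\varepsilon(n-1)^2$, the graph $G-v$ is still $\varepsilon$-far from bipartite, now on $n-1$ vertices. Iterating this, we either reach a graph on at most $2/\varepsilon$ vertices — which is non-bipartite and hence already contains an odd cycle of length at most $2/\varepsilon$, finishing the proof — or a graph $G'$ on $n'>2/\varepsilon\ge 4$ vertices (so $n'\ge 5$) with $\delta(G')>2\varepsilon(n'-1)$ that is still $\varepsilon$-far from bipartite.

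Since $G'$ is non-bipartite, let $C$ be a shortest odd cycle in it, of odd length $g$; if $g\le 2/\varepsilon$ we are done, so assume $g>2/\varepsilon$. If $G'$ has a triangle then $g=3\le 2/\varepsilon$ (as $\varepsilon\le\frac12$), a contradiction, so $G'$ is triangle-free and Mantel's theorem gives $e(G')\le n'^2/4$; but also $e(G')\ge\tfrac12 n'\delta(G')>\varepsilon n'(n'-1)$, so $\varepsilon<\tfrac{n'}{4(n'-1)}\le\tfrac{5}{16}$, which forces $g>2/\varepsilon>32/5$, i.e.\ $g\ge 7$. Now the key structural fact: a shortest odd cycle $C$ is isometric in $G'$, meaning $d_{G'}(v_i,v_j)=d_C(v_i,v_j)$ for all $v_i,v_j\in V(C)$ — otherwise a shorter $v_i$–$v_j$ path closed up with the appropriately chosen arc of $C$ would be an odd closed walk of length less than $g$, hence contain a shorter odd cycle. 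Consequently every vertex $u$ of $G'$ is adjacent to at most three vertices of $C$: if $u$ is adjacent to $v_i$ and $v_j$ then $d_{G'}(v_i,v_j)\le 2$, so $d_C(v_i,v_j)\le 2$, and on a cycle of length $g\ge 7$ any set of vertices pairwise at distance at most $2$ has size at most $3$. Double counting the edges between $V(C)$ and $V(G')$ then yields $g\,\delta(G')\le\sum_{v\in V(C)}\deg(v)=\sum_{u\in V(G')}|N(u)\cap V(C)|\le 3n'$, hence $g\le 3n'/\delta(G')<\tfrac{3n'}{2\varepsilon(n'-1)}\le\tfrac{15}{8\varepsilon}<\tfrac{2}{\varepsilon}$, contradicting $g>2/\varepsilon$.

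The part requiring the most care is the small-parameter bookkeeping needed to land exactly on the constant $2/\varepsilon$ rather than on $O(1/\varepsilon)$: one must rule out shortest odd cycles of length $5$ or $7$ when $\varepsilon$ is not tiny (done above via Mantel, which is why the triangle-free case enters), and one must make sure the minimum-degree reduction leaves enough vertices for the isometry/counting estimate to actually beat $2/\varepsilon$. The isometry of a shortest odd cycle and the ensuing "each vertex sees at most three cycle vertices" bound are standard, and everything else is routine; if one only wants an odd cycle of length $O(1/\varepsilon)$, the Mantel case distinction can be dropped entirely.
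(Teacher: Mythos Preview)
Your proof is correct and follows essentially the same strategy as the paper: pass to a subgraph of high minimum degree, take a shortest odd cycle $C$, and double-count edges incident to $V(C)$. The paper's execution is cleaner in two places. First, it uses the fixed threshold $\varepsilon n$ rather than your moving threshold $2\varepsilon(n'-1)$: deleting vertices of degree below $\varepsilon n$ removes fewer than $\varepsilon n^2$ edges in total, so the remaining graph $G[W]$ is still non-bipartite and has $\delta\ge\varepsilon n$. Second, it sharpens your ``at most three neighbours on $C$'' to ``at most two'' by observing that if $v\notin V(C)$ has two neighbours on $C$ at $C$-distance $1$, then $v$ and these two neighbours form a triangle, contradicting minimality of $C$; hence any two $C$-neighbours of $v$ are at $C$-distance exactly $2$, and three such would force $|C|=6$. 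This gives $|C|\cdot\varepsilon n\le 2|W|\le 2n$ directly, so the Mantel detour and the careful bookkeeping you flag in your last paragraph become unnecessary.
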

\begin{proof}
	Repeatedly delete from $G$ vertices of degree less than $\varepsilon n$, until no such vertices are left. Let $W$ be the set of remaining vertices. The total number of edges deleted in this process is less than $\varepsilon n^2$, hence $G[W]$ is not bipartite. Also, each vertex in $G[W]$ has degree at least $\varepsilon n$. Let $C$ be a shortest odd cycle in $G[W]$.
	If $C$ is a triangle then we are done, so suppose $|C| \geq 5$.
	By minimality, $C$ is an induced cycle.
	For $x,y \in V(C)$, denote by $\text{dist}_C(x,y)$ the distance between $x$ and $y$ along $C$.
	Observe that if $x,y \in V(C)$ have a common neighbor $v \in W \setminus V(C)$, then $\text{dist}_C(x,y) = 2$. Indeed, suppose otherwise, and let $P_1,P_2$ be the two paths between $x,y$ along $C$. Without loss of generality, suppose that $P_1$ is odd and $P_2$ is even. We have $|P_2| > 2$ because $\text{dist}_C(x,y) \neq 2$. Now, replacing $P_2$ with the path $xvy$, we obtain a shorter odd cycle, a contradiction. It follows that every $v \in W$ has at most $2$ neighbors on $C$. Indeed, if $x,y,z \in V(C)$ are neighbors of $v$, then any two among $x,y,z$ are at distance $2$, implying that $C$ has length $6$, a contradiction. Now, summing over the degrees of vertices in $C$, we get
	$
	|C| \cdot \varepsilon n \leq \sum_{x \in V(C)}d_W(x) \leq 2|W| \leq 2n.
	$
	It follows that $|C| \leq 2/\varepsilon$.
\end{proof}

\begin{proof}[Proof of \cref{thm:hierarchy}]
	Let $G$ be a graph which is $\varepsilon$-far from being $\{C_{\ell_i} : i \geq 1\}$-free.
	We claim that there is an odd integer $1 \leq 2k+1 \leq \frac{4}{\varepsilon}$ such that $G$ is $\frac{\varepsilon^2}{4}$-far from being $C_{2k+1}$-free. Indeed, otherwise we could destroy all odd cycles in $G$ of length at most $\frac{4}{\varepsilon}$ by deleting at most $\frac{2}{\varepsilon} \cdot \frac{\varepsilon^2}{4}n^2 = \frac{\varepsilon}{2}n^2$ edges. Let $G'$ be the resulting graph. By \cref{lem:shortest odd cycle}, $G'$ is not $\frac{\varepsilon}{2}$-far from being bipartite. This implies that $G$ is not $\varepsilon$-far from being $\{C_{\ell_i} : i \geq 1\}$-free, a contradiction.
	
	So fix $1 \leq 2k+1 \leq \frac{4}{\varepsilon}$ such that $G$ is $\frac{\varepsilon^2}{4}$-far from being $C_{2k+1}$-free.
	Then $G$ contains $\frac{\varepsilon^2}{4(2k+1)}n^2 \geq \frac{\varepsilon^2}{20k}n^2$ edge-disjoint copies of $C_{2k+1}$.
	Let $i \geq 1$ be maximal satisfying $\ell_i \leq \frac{4}{\varepsilon}$; this is well-defined because $\varepsilon \leq \frac{1}{4\ell_1}$ by assumption.
	Let $\ell := \ell_{i+1} = g(\ell_i)$. 
	By the maximality of $i$, we have $\ell_{i+1} > \frac{4}{\varepsilon} \geq 2k+1$.  
	We now apply Lemma \ref{lem:odd cycles} for the cycle lengths $C_{2k+1}$ and $C_{\ell}$ and with $\frac{\varepsilon^2}{20k}$ in place of $\varepsilon$. By this lemma, a sample of
	$q$ vertices of $G$ contains a copy of $C_{\ell}$ with probability at least $\frac{2}{3}$, where
	$$
	q = O\left( \frac{k^4\ell\log(\ell)}{\varepsilon^4} \right) = O(\varepsilon^{-8}) \cdot \ell \log(\ell) \leq O(\varepsilon^{-8}) \cdot g(4/\varepsilon) \cdot \log(g(4/\varepsilon)).
	$$
	Here we used that $\ell = g(\ell_i)$ and $\ell_i \leq 4/\varepsilon$. This completes the proof.
\end{proof}

We remark that Koml\'os \cite{Komlos} improved  \cref{lem:shortest odd cycle} by showing that every graph that is $\varepsilon$-far from bipartite contains an odd cycle of length $O(\varepsilon^{-1/2})$, which is best possible.
Using this result in place of  \cref{lem:shortest odd cycle} in the above proof, one can improve the bound in  \cref{thm:hierarchy} to
$w_L(\varepsilon) \leq O(\varepsilon^{-4}) \cdot g(\ell) \cdot \log(g(\ell))$, where $\ell = O(\varepsilon^{-1/2})$.
As was shown in \cite{GS}, this is essentially tight.

\section{Proof of Theorem \ref{thm:main-non-abundant}}\label{sec:non-abundant-proof}
In this section, we prove \cref{thm:main-non-abundant}, which states that there exist triangle-free tripartite non-$K_3$-abundant graphs. To prove this, we need to construct a triangle-free and tripartite graph $H$, and a sequence of graphs $G$, such that $G$ is $\varepsilon$-far from being triangle-free and contains $\varepsilon^{\omega(1)} n^{\ab{V(H)}}$ copies of $H$, where $\omega(1)$ tends to infinity as $\varepsilon \to 0$.

Our proof naturally splits into three parts. First, we construct $G$: it is a variant of the Ruzsa--Szemer\'edi construction which gives super-polynomial bounds for the triangle removal lemma. In \cref{subsec:RS}, we recall the relevant notions from additive combinatorics and give the construction of $G$. In \cref{subsec:strongly-genus-1}, we define what it means for a graph $H$ to be \emph{strongly genus-one}: this is a combinatorial condition which guarantees that $G$ contains few copies of $H$, thanks to the number-theoretic structure of $G$. Finally, in \cref{subsec:pseudorandom}, we prove that a graph satisfying certain pseudorandomness conditions is strongly genus-one. With this, all that remains is showing that there exist triangle-free graphs satisfying these pseudorandomness conditions, for which we pick an appropriate random graph and remove one edge from each triangle. Verifying that this graph satisfies the pseudorandomness conditions consists of standard arguments in random graph theory, which we do in \cref{sec:random-graphs}.

\subsection{Additive combinatorics and the Ruzsa--Szemer\'edi construction} \label{subsec:RS}
In the introduction, we defined what it means for an equation to have genus one; the following definition is a natural extension of this to families of equations.
\begin{definition}
  Let $S$ be a set of $s$ equations with integer coefficients and $k$ variables, namely
  \[
    S = \left\{ \sum_{i=1}^k a_{i,j} x_i = 0: 1 \leq j \leq s \right\}.
  \]
  We assume that each equation in $S$ is translation-invariant, meaning that $\sum_{i=1}^k a_{i,j}=0$ for all $j \in [s]$.
  One says that $S$ \emph{has genus one} if for all $\varnothing \subsetneq T \subsetneq [k]$, there exists some $j \in [s]$ so that $\sum_{i \in T} a_{i,j} \neq 0$.
\end{definition}

Let $S$ be a set of equations with genus one on $k$ variables. As in the case of a single equation, we say that integers $y_1,\dots,y_k$ form a \emph{non-trivial solution to $S$} if $y_1,\dots,y_k$ are not all equal, and if they satisfy all equations in $S$. We denote by $r_S(m)$ the maximum size of a subset of $[m]$ containing no non-trivial solution to $S$.
A simple argument shows that \cref{conj:ruzsa} implies the corresponding result for sets of equations, as stated in the following lemma.
\begin{lemma}\label{prop:ruzsa-for-sets}
  Assume that \cref{conj:ruzsa} holds. If $S$ is a set of linear equations of genus one, then $r_S(m) \geq m^{1-o(1)}$.
\end{lemma}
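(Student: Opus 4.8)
The plan is to reduce the statement about a system $S$ of genus-one equations to the single-equation case covered by \cref{conj:ruzsa}. The key observation is that if $S = \{E_1,\dots,E_s\}$ is a system of genus one on $k$ variables, then a suitable \emph{generic linear combination} of the $E_j$ is a single equation of genus one. Concretely, for scalars $\lambda_1,\dots,\lambda_s$, form the equation $E_\lambda : \sum_{i=1}^k \big(\sum_{j=1}^s \lambda_j a_{i,j}\big) x_i = 0$ with coefficient vector $a_i(\lambda) := \sum_j \lambda_j a_{i,j}$. Since each $E_j$ is translation-invariant, $\sum_i a_i(\lambda) = 0$ automatically, so $E_\lambda$ is translation-invariant. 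For genus one we need, for each $\varnothing \subsetneq T \subsetneq [k]$, that $\sum_{i\in T} a_i(\lambda) = \sum_j \lambda_j\big(\sum_{i\in T} a_{i,j}\big) \neq 0$. Because $S$ has genus one, for each such $T$ the vector $\big(\sum_{i\in T} a_{1,j},\dots,\sum_{i \in T}a_{s,j}\big)_{j}$ is a nonzero vector in $\Z^s$, so the linear form $\lambda \mapsto \sum_j \lambda_j \sum_{i\in T} a_{i,j}$ is not identically zero, hence vanishes only on a proper linear subspace (a hyperplane) of $\Q^s$. There are finitely many sets $T$, so the union of these hyperplanes is a proper subset of $\Q^s$; choosing $\lambda \in \Z^s$ avoiding all of them (e.g.\ by picking $\lambda$ inside a large enough box and using that each hyperplane misses all but a $(1/N)$-fraction of a box of side $N$, or simply by a dimension count), we obtain integer coefficients $\lambda_1,\dots,\lambda_s$ for which $E_\lambda$ has genus one.

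Next I would relate solution sets. If $y_1,\dots,y_k$ satisfy every equation in $S$, then they satisfy $E_\lambda$ for every $\lambda$; hence any set $R \subseteq [m]$ that contains no non-trivial solution to $E_\lambda$ also contains no non-trivial solution to $S$ (the notion of ``non-trivial'' is the same in both cases — the $y_i$ are not all equal — and it does not depend on $\lambda$ or on the particular equations, so there is no mismatch). Therefore $r_S(m) \geq r_{E_\lambda}(m)$. Applying \cref{conj:ruzsa} to the single genus-one equation $E_\lambda$ gives $r_{E_\lambda}(m) \geq m^{1-o(1)}$, and the lemma follows.

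The only genuinely delicate points are bookkeeping rather than substance: one must make sure that the combination $E_\lambda$ really is \emph{non-degenerate} in the relevant sense — in particular that it has genus one, which is exactly the Zariski-avoidance argument above — and one must double-check that ``non-trivial solution'' is interpreted consistently (not all $y_i$ equal) so that $r_S(m) \ge r_{E_\lambda}(m)$ holds with no loss. I expect the main (very minor) obstacle to be purely notational: verifying that passing to a $\Q$-linear combination and then clearing denominators to get \emph{integer} coefficients preserves genus one, which is immediate since scaling all coefficients of an equation by a nonzero integer changes none of the sign/vanishing conditions defining genus one. Everything else is a one-line application of \cref{conj:ruzsa}.
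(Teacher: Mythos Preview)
Your proposal is correct and follows essentially the same approach as the paper: form a single linear combination $E_\lambda$ of the equations in $S$, verify that a generic choice of coefficients yields a genus-one equation, and then use $r_S(m) \geq r_{E_\lambda}(m)$ together with \cref{conj:ruzsa}. The only cosmetic difference is in how ``generic'' is made precise: the paper picks the coefficients $b_1,\dots,b_s$ uniformly at random from $[2^k]$ and union-bounds over the fewer than $2^k$ sets $T$, whereas you phrase the same step as avoiding a finite union of hyperplanes in $\Q^s$ and then clearing denominators.
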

\begin{proof}
  Let $S$ consist of $s$ equations $E_1,\dots,E_s$ in $k$ variables. Let $b_1,\dots,b_s$ be independent and uniformly random numbers from $[2^k]$, and let $E=b_1 E_1 + \dotsb + b_s E_s$. Let the coefficients of $E$ be $a_1,\dots,a_k$, where $a_i = \sum_{j=1}^s b_j a_{i,j}$.

  Fix a set $\varnothing \subsetneq T \subsetneq [k]$. We claim that the probability that $\sum_{i \in T} a_i=0$ is at most $2^{-k}$. Indeed, since $S$ has genus one, there exists some $j \in [s]$ so that $\sum_{i \in T} a_{i,j} \neq 0$. For any fixed values of $b_1,\dots,b_{j-1},b_{j+1},\dots,b_s$, there is at most one choice of $b_j \in [2^k]$ so that $\sum_{i \in T} a_i=0$. This shows that the probability that $\sum_{i \in T} a_i=0$ is at most $2^{-k}$. As there are fewer than $2^k$ such sets $T$, we conclude that with positive probability, $E$ has genus one. We now fix $b_1,\dots,b_s$ such that $E$ has genus one.

  We now observe that $r_S(m) \geq r_E(m)$. Indeed, a non-trivial solution to $S$ is also a non-trivial solution to $E$, so a set avoiding non-trivial solutions to $E$ necessarily also avoids non-trivial solutions to $S$. By \cref{conj:ruzsa}, we have that $r_E(m) \geq m^{1-o(1)}$, yielding the claim.
\end{proof}

Additionally, it is easy to see that \cref{conj:ruzsa} implies the following result for multiple sets of equations.
\begin{lemma}\label{lem:random-shift}
  Assume that \cref{conj:ruzsa} holds. Let $S_1,\dots,S_t$ be sets of equations, such that each $S_i$ has genus one. Then there exists $R \subseteq [m]$ with $\ab R \geq m^{1-o(1)}$ such that $R$ has no non-trivial solutions to $S_i$, for all $1 \leq i \leq t$.
\end{lemma}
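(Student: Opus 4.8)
The plan is to reduce to the single-system case already handled in \cref{prop:ruzsa-for-sets} by \emph{merging} $S_1,\dots,S_t$ into one genus-one system $S$. One cannot simply take the disjoint union of the $S_i$ on disjoint variable sets: taking $T$ to be the entire variable set of one $S_i$ makes every equation have zero $T$-coefficient-sum (each equation is translation-invariant), so the naive union is not genus one. Instead I would glue the systems along a single shared variable. Concretely, write the variables of $S_i$ as $x^{(i)}_1,\dots,x^{(i)}_{k_i}$, introduce one new variable $x_0$, and in each $S_i$ replace $x^{(i)}_1$ by $x_0$ while keeping $x^{(i)}_2,\dots,x^{(i)}_{k_i}$ as \emph{private} variables. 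Let $S$ be the resulting collection of equations on the variable set $\{x_0\}\cup\bigcup_{i=1}^t\{x^{(i)}_2,\dots,x^{(i)}_{k_i}\}$; each equation of $S$ is a relabelling of a translation-invariant equation, hence translation-invariant.

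The next step is to verify that $S$ has genus one. Fix $\varnothing\subsetneq T\subsetneq V(S)$. If $x_0\in T$, then since $T\neq V(S)$ and the private variables of the $S_i$ together with $x_0$ exhaust $V(S)$, some $S_i$ has a private variable outside $T$; thus $\{j\in[k_i]:x^{(i)}_j\in T\}$ is a nonempty proper subset of $[k_i]$ (it contains $1$ and misses that private index), so genus-oneness of $S_i$ supplies an equation of $S_i$ whose coefficient-sum over this index set, i.e.\ its $T$-coefficient-sum in $S$, is nonzero. If $x_0\notin T$, pick $i$ with $T$ meeting the private variables of $S_i$; then $\{j:x^{(i)}_j\in T\}$ is a nonempty subset of $\{2,\dots,k_i\}$, again a nonempty proper subset of $[k_i]$, and genus-oneness of $S_i$ again produces an equation with nonzero $T$-coefficient-sum. (Systems with $k_i=1$ are harmless: they have no private variables, consist of the trivial equation, and admit no non-trivial solution.)

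Finally I would check that any $R\subseteq[m]$ with no non-trivial solution to $S$ has no non-trivial solution to any $S_i$: given a non-trivial solution $(y_1,\dots,y_{k_i})$ of $S_i$ with all $y_j\in R$, extend it to an assignment of $S$ by setting $x_0:=y_1$, $x^{(i)}_j:=y_j$ for $2\le j\le k_i$, and $x^{(i')}_j:=y_1$ for all $i'\neq i$ and all $j$; every equation of $S_i$ is then satisfied by construction, and every equation of $S_{i'}$ with $i'\neq i$ has all its variables set to $y_1$ and so is satisfied by translation-invariance, giving a non-trivial solution of $S$ with entries in $R$, a contradiction. Applying \cref{prop:ruzsa-for-sets} to the genus-one system $S$ then yields $R\subseteq[m]$ with $\ab R\geq m^{1-o(1)}$ avoiding non-trivial solutions to $S$, hence to every $S_i$, which is the claim. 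I do not expect a serious obstacle, as this is essentially bookkeeping; the step requiring the most care is the genus-one verification for $S$, where one must correctly push the ``nonempty proper subset of variables'' condition through the identification $x^{(i)}_1\mapsto x_0$ and invoke genus-oneness of the appropriate $S_i$ in each of the two cases.
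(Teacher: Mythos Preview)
Your argument is correct, and it is a genuinely different route from the paper's proof. The paper does not merge the systems; instead it applies \cref{prop:ruzsa-for-sets} to each $S_i$ separately to obtain sets $R_i\subseteq[m]$ with $\ab{R_i}\geq m^{1-o(1)}$ and no non-trivial solutions to $S_i$, embeds them in $\mathbb{Z}/M\mathbb{Z}$ for a large enough $M$ to avoid wraparound, and then takes $R=R_1\cap(R_2+a_2)\cap\dots\cap(R_t+a_t)$ for random shifts $a_i$; translation-invariance ensures each shifted $R_i+a_i$ still avoids $S_i$, and a standard expectation computation gives $\ab R\geq m^{1-o(1)}$. Your approach trades this probabilistic density argument for a purely combinatorial gluing: by identifying one variable across all systems you manufacture a single genus-one system $S$ whose solution-free sets are automatically solution-free for every $S_i$, and then invoke \cref{prop:ruzsa-for-sets} once. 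Both proofs lean on translation-invariance in an essential way (the paper for the shift, you for the extension step setting all foreign variables to $y_1$), and both give an $o(1)$ depending on $t$ and the $S_i$. Your version is arguably cleaner here, while the random-shift method is a more portable template that would work whenever one has dense solution-free sets for each $S_i$ individually, without needing to verify that a merged system inherits a structural property like genus one.
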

\begin{remark}
  This is stronger than saying that the union of genus-one sets of equations also has genus one. Indeed, avoiding a solution to some set $S$ of equations means that for any non-trivial assignment of the variables, at least one equation in $S$ is not satisfied. Here, we are insisting that at least one equation \emph{in each $S_i$} is not satisfied.
\end{remark}
\begin{proof}[Proof of \cref{lem:random-shift}]
  By \cref{prop:ruzsa-for-sets}, for each $i$, there is an $R_i \subseteq[m]$ with $\ab{R_i} \geq m^{1-o(1)}$ containing no non-trivial solutions to $S_i$. Let $s$ be the sum of the absolute values of the coefficients in all the equations in $S_1,\dots,S_t$, and let $M=2sm$. Then if we view $R_i$ as a subset of the cyclic group $\mathbb{Z}/M\mathbb{Z}$, then $R_i$ still contains no non-trivial solutions to $S_i$, as $M$ was chosen sufficiently large to not introduce any ``wraparound'' solutions.

  Let $a_2,\dots,a_t$ be independent and uniformly random elements of $\mathbb{Z}/M\mathbb{Z}$, and let $$R=R_1 \cap (R_2+a_2) \cap \dotsb \cap (R_t +a_t).$$ As all equations are translation-invariant, $R$ has no non-trivial solutions to $S_i$ for all $1 \leq i \leq t$. Additionally, as $R \subseteq R_1$, we may view $R$ as a subset of $[m]$. Finally,
  \[
    \E[\ab R] = \sum_{x \in R_1} \prod_{i=2}^t \pr(x \in R_i+a_2) = \ab{R_1} \cdot \prod_{i=2}^t\frac{\ab{R_i}}{M} \geq m^{1-o(1)},
  \]
  using the facts that $\ab{R_i} \geq m^{1-o(1)}$ for each $i$ and that $M=2sm = O_S(m)$. Thus, there is some $R\subseteq [m]$ with the desired properties.
\end{proof}

Fix an integer $m$ and a set $R \subseteq [m]$. We define the \emph{Ruzsa--Szemer\'edi graph} $\RS(m,R)$ to be the following graph. It has three parts $\A,\B,\C$, each of which we identify with $[3m]$. The edges are given by
\[
  (a,b) \in \A \times \B: b-a \in  R \qquad (b,c) \in \B \times \C: c-b \in  R \qquad (c,a) \in \C \times \A : c-a \in 2 R.
\]
The following property of the Ruzsa--Szemer\'edi graph is well-known. We include the proof for completeness.
\begin{lemma}\label{lem:eps-far}
	Let $m$ be an integer and let $R \subseteq [m]$. Then $\RS(m,R)$ contains a collection of $m\ab R$ edge-disjoint triangles.
\end{lemma}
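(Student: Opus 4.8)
The plan is to exhibit an explicit collection of $m\ab R$ triangles in $\RS(m,R)$ and verify that they are edge-disjoint. Recall the three parts $\A,\B,\C$ are each identified with $[3m]$, and the edges are $ab$ for $b-a\in R$, $bc$ for $c-b\in R$, and $ca$ for $c-a\in 2R$. The natural family of triangles to use is the one coming from \emph{trivial} arithmetic progressions: for each $x\in[3m]$ (playing the role of a base point) and each $r\in R$, take the triple $(a,b,c)=(x,\,x+r,\,x+2r)$. Then $b-a=r\in R$, $c-b=r\in R$, and $c-a=2r\in 2R$, so $a\in\A$, $b\in\B$, $c\in\C$ indeed form a triangle. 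Here we should be slightly careful about the ranges: since $R\subseteq[m]$ and we want $x,x+r,x+2r$ all to be valid indices in $[3m]$, it suffices to restrict $x$ to range over $[m]$ (so that $x+2r\le m+2m=3m$). This gives exactly $m\cdot\ab R$ triangles, one for each pair $(x,r)\in[m]\times R$.

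Next I would check edge-disjointness. The key point is that the progression $(x,x+r,x+2r)$, equivalently the pair $(x,r)$, is uniquely recoverable from any \emph{one} of its three edges. An $\A\B$-edge of our triangle is the pair $(x,x+r)$; from it we read off $x$ as the $\A$-endpoint and $r=(x+r)-x$, so at most one triangle in our collection uses a given $\A\B$-edge. Likewise a $\B\C$-edge is $(x+r,x+2r)$, from which $r=(x+2r)-(x+r)$ and then $x=(x+r)-r$ are determined; and a $\C\A$-edge is $(x+2r,x)$, from which $x$ is the $\A$-endpoint and $r=\tfrac12\big((x+2r)-x\big)$ is determined (recall $c-a=2r$, and $r$ is an integer). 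Since every edge of $\RS(m,R)$ lies between two \emph{distinct} parts, each edge has a well-defined ``type'' ($\A\B$, $\B\C$, or $\C\A$), and by the above each edge is used by at most one triangle of our family. Hence the $m\ab R$ triangles are pairwise edge-disjoint, which proves the lemma.

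I do not expect any genuine obstacle here; the only thing to be careful about is the bookkeeping with the index ranges (ensuring all of $x,x+r,x+2r$ land in $[3m]$, which forces the base point to range over $[m]$ rather than $[3m]$, giving the clean count $m\ab R$) and the observation that the factor of $2$ in the $\C\A$ edges is exactly what makes $r$ recoverable there. One could alternatively phrase this more symmetrically by working in $\mathbb{Z}/(3m)\mathbb{Z}$ (or a slightly larger cyclic group) to avoid boundary effects entirely, but the direct approach over $[m]$ already yields the stated bound of $m\ab R$ edge-disjoint triangles.
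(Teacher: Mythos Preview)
Your proof is correct and follows essentially the same approach as the paper: exhibit the triangles $(x,x+r,x+2r)$ for $(x,r)\in[m]\times R$ and observe that any single edge determines the pair $(x,r)$, hence the triangles are edge-disjoint. The paper's argument is terser but identical in substance.
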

\begin{proof}
  For every $a \in [m]$ and $r \in R$, we have a triangle $(a,a+r, a+2r) \in \A \times \B \times \C$. Given any two vertices in this triangle, we can recover the values of $a$ and $r$, so these triangles are edge-disjoint. As they are indexed by pairs $(a,r) \in [m] \times R$, there are $m \ab R$ such triangles.
\end{proof}
\subsection{Strongly genus-one graphs}\label{subsec:strongly-genus-1}
We now define what it means for a graph $H$ to be strongly genus-one. This is a Ramsey-theoretic condition that implies that a certain set of equations $S$ associated to the cycles in $H$ has genus one. Using this, we can show that an appropriate Ruzsa--Szemer\'edi graph $\RS(m,R)$ contains few copies of $H$, by choosing $R \subseteq [m]$ to be a set of integers containing no non-trivial solutions to $S$.

\begin{definition}[tagged cycle]
  Let $H$ be a tripartite graph with tripartition $A \cup B \cup C$, and suppose that the edges of $H$ are colored white or black. We say that a cycle $x_1,x_2,\dots,x_\ell,x_1$ in $H$ is \emph{tagged} if one of the following two conditions hold:
  \begin{itemize}
    \item Some edge $x_i x_{i+1}$ is black and the remaining edges are white (or this holds after swapping the colors).
    \item Two consecutive edges $x_{i-1} x_i$ and $x_i x_{i+1}$ are both black, all remaining edges are white, and $x_{i-1},x_i,x_{i+1}$ all lie in different parts $A,B,C$ (or this holds after swapping the colors).
  \end{itemize}
\end{definition}
Recall that a graph is \emph{uniquely 3-colorable} if it has a unique partition into three independent sets.
\begin{definition}[strongly genus-one graph]
  We say that a graph $H$ is \emph{strongly genus-one} if it is uniquely 3-colorable and  if, no matter how we color the edges of $H$ white or black such that there is at least one edge of each color, there is a tagged cycle.
\end{definition}
Note that the definition of a tagged cycle depends on the tripartition $A \cup B \cup C$. However, the definition of strongly genus-one requires $H$ to be uniquely 3-colorable, removing this ambiguity.

The Ruzsa--Szemer\'edi construction gives us a way of associating a set of equations $S$ to a tripartite graph $H$, in such a way that copies of $H$ in $\RS(m,R)$ are related to solutions of $S$ in $R$. We now define this set of equations.

Let $H$ be a tripartite graph with tripartition $A \cup B \cup C$. We assign every edge from $A$ to $B$ weight $1$, every edge from $B$ to $C$ weight $1$, and every edge from $C$ to $A$ weight $-2$. Note that we view the edges of $H$ as oriented, so an edge going from $B$ to $A$ receives weight $-1$, for example. For every edge $e \in E(H)$, we introduce a variable $x_e$. Every cycle in $H$ now naturally gives us an equation whose variables are $x_e$ for edges $e$ in the cycle, and whose coefficients are $\pm 1, \pm 2$: we simply write down the (signed) weights we see as we follow the cycle around the graph.
Formally, given a cycle $v_1,\dots,v_{\ell}$ with (oriented) edges $e_i = v_iv_{i+1}$, $i = 1,\dots,\ell$, we get the equation $\sum_{i=1}^{\ell}w(e_i) \cdot x_{e_i} = 0$, where $w(e_i)$ is the signed weight of $e_i$.
We now form a set of equations $S_{A,B,C}(H)$ by including all such equations, one equation for every cycle in the graph. Note that, as is reflected in the notation, this set of equations depends on $H$ and on the tripartition $A \cup B \cup C$.

The definition of tagged cycles implies that if $H$ is strongly genus-one, then $S_{A,B,C}(H)$ is a set of equations of genus one, as stated in the following lemma.

\begin{lemma}\label{lem:genus-one}
  Let $H$ be a strongly genus-one graph. Then for every tripartition\footnote{Of course, up to isomorphism, there is only one tripartition of $H$. But the set of equations $S_{A,B,C}(H)$ depends on the specific labeling we use of the sets, as this determines which edges receive which weights.} $A \cup B \cup C$ of $H$, the set of equations $S_{A,B,C}(H)$ has genus one.
\end{lemma}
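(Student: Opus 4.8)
The plan is to unfold both definitions and check the contrapositive: if $S_{A,B,C}(H)$ fails to have genus one, then there is a set $\varnothing \subsetneq T \subsetneq E(H)$ with $\sum_{e \in T} w(e_i) x_{e_i}$-coefficient summing to zero along every cycle, and I would show that this produces a $2$-coloring of $E(H)$ (color the edges in $T$ black and the rest white) with no tagged cycle, contradicting the hypothesis. So the first step is to translate the genus-one failure into a combinatorial statement about the coloring. Recall that a cycle $v_1,\dots,v_\ell$ gives the equation $\sum_i w(e_i) x_{e_i}=0$, and the genus-one condition demands that for every nonempty proper $T \subsetneq E(H)$, \emph{some} cycle-equation has $\sum_{e_i \in T} w(e_i) \neq 0$. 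So if genus one fails, we have a fixed $T$ such that for \emph{every} cycle $v_1,\dots,v_\ell$ of $H$, $\sum_{i : e_i \in T} w(e_i) = 0$.

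The heart of the argument is then to analyze, for a single cycle, what the condition $\sum_{i : e_i \in T} w(e_i) = 0$ says about the pattern of black (in $T$) and white (not in $T$) edges around the cycle, together with which parts $A,B,C$ the vertices lie in. Each edge of the cycle has weight $\pm 1$ or $\pm 2$ depending on which pair of parts it connects and its orientation. I would want to argue that, along any cycle, the black edges contributing a nonzero total is \emph{almost} forced unless the cycle is ``tagged'' in the precise sense of the definition: a single black edge always contributes $\pm 1$ or $\pm 2 \neq 0$, so a cycle with exactly one black edge is automatically tagged and has nonzero black-weight-sum — good, that case can't arise. A cycle with exactly two consecutive black edges through all three parts has black weights, e.g., $+1$ (from $A$ to $B$) then $+1$ (from $B$ to $C$), or with orientations and the $-2$ edge, and one checks these never cancel; hence such a cycle is also tagged. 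The real content is to verify that these are the \emph{only} ways to get a zero black-weight-sum along a cycle — equivalently, that any cycle whose black edges sum to zero is tagged — and, conversely and more usefully here, that a coloring with no tagged cycle cannot have every cycle's black-weight-sum vanish unless $T$ is trivial. I expect this case analysis — classifying black/white patterns along a cycle and checking the weighted sum — to be the main obstacle, requiring some care with signs and the modular structure of which part follows which.

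More precisely, I would set it up as follows. Assume for contradiction that $S_{A,B,C}(H)$ does not have genus one, and fix the witnessing $T$ with $\varnothing \subsetneq T \subsetneq E(H)$. Color edges in $T$ black and edges of $E(H)\setminus T$ white; since $T$ is a nonempty proper subset, both colors appear. Since $H$ is strongly genus-one, there is a tagged cycle $x_1,\dots,x_\ell$. I would then compute the black-weight-sum of this cycle directly from the two cases in the definition of tagged: in the first case there is exactly one black edge (up to swapping colors, but note swapping colors of the tagged cycle corresponds to replacing the black set by its complement \emph{within that cycle}, which is $E(H)\setminus T$ restricted to the cycle — here I need to be slightly careful, but in either reading the cycle has exactly one edge of one color, so its weight sum in that color is a single nonzero weight), so the sum is $\pm 1$ or $\pm 2 \neq 0$; in the second case there are exactly two consecutive black (or white) edges spanning all three parts, and I check by hand that the two relevant weights — which are among $\{+1,+1\}$, $\{+1,-2\}$, $\{-2,+1\}$ and their negations/reorderings according to orientation — never sum to zero. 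Either way, this particular cycle's equation has $\sum_{i : e_i \in T} w(e_i) \neq 0$, contradicting the choice of $T$. Hence $S_{A,B,C}(H)$ has genus one. The only subtlety I foresee, beyond the sign bookkeeping, is making sure the ``or this holds after swapping the colors'' clause is handled symmetrically: if the tagged cycle is monochromatic-white-except-one-black-edge we use $T$, and if it is monochromatic-black-except-one-white-edge we instead observe that then $E(H)\setminus T$ also witnesses a failure of genus one (its cycle-sums are the negatives of those of $T$, hence also all zero) and rerun the argument — so I would either phrase the whole proof symmetrically in $T$ and its complement, or note at the outset that $T$ may be replaced by $E(H)\setminus T$ without loss of generality.
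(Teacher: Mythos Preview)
Your proposal is correct and follows essentially the same approach as the paper: assume a bad set $T$ exists, color $T$ black and its complement white, invoke the strongly genus-one hypothesis to produce a tagged cycle, and then compute that the sum of the black coefficients along that cycle is $\pm 1$ or $\pm 2$, a contradiction. The paper handles the ``or this holds after swapping the colors'' clause exactly as you suggest at the end---by noting that the complement of $T$ is also a bad set (since each cycle-equation is translation-invariant, so the complement's sum along any cycle is minus that of $T$, hence also zero)---so your final paragraph matches the paper's argument; the exploratory middle paragraph over-thinks what needs to be verified, but you already self-correct this in the ``More precisely'' part.
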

\begin{proof}
  Suppose for contradiction that there is some set $\varnothing\subsetneq T \subsetneq E(H)$ so that in every equation in $S_{A,B,C}(H)$, the sum of the coefficients on the edges in $T$ is zero. Color the edges in $T$ black, and all remaining edges white. As we assume $\varnothing\subsetneq T \subsetneq E(H)$, we have at least one edge of each color. So by the definition of a strongly genus-one graph, we must have at least one tagged cycle. By swapping the colors if necessary\footnote{Note that if $T$ is such a ``bad'' set, then so is its complement, so we may swap the colors.}, we find a cycle with either exactly one black edge, or two consecutive black edges going between two different pairs of parts. Consider the equation corresponding to this cycle.

  In case there is exactly one black edge, then the sum of the coefficients on the edges in $T$ is $\pm 1$ or $\pm 2$. In particular, this sum is non-zero, contradicting our assumption on $T$. Similarly, if there are two consecutive black edges going between two different pairs of parts of $H$, then the sum of the coefficients on edges in $T$ is $\pm (1+1)=\pm 2$ or $\pm (-2+1)=\pm 1$, both of which are non-zero, another contradiction.
\end{proof}
\begin{remark}
  The definition of a tagged cycle is stronger than what is strictly needed for \cref{lem:genus-one}. Indeed, all we need is a cycle in $H$ such that the sum of the weights on the black edges in the cycle is non-zero. However, our construction of a strongly genus-one graph $H$ naturally yields a tagged cycle, rather than a more general ``non-canceling cycle''. This is the reason we use the term \emph{strongly} genus-one for such graphs: the combinatorial condition we require is stronger than what is needed simply to guarantee that $S_{A,B,C}(H)$ is a genus-one set of equations.
\end{remark}

We now have all the pieces in place to prove that strongly genus-one graphs are not $K_3$-abundant, assuming that \cref{conj:ruzsa} holds. 
\begin{lemma}\label{thm:superpoly}
  Assume that \cref{conj:ruzsa} holds. \nopagebreak
  
  Let $H$ be a strongly genus-one graph, and let $\varepsilon>0$ be sufficiently small. For every sufficiently large $N$, there exists an $N$-vertex graph $G$ which is $\varepsilon$-far from being triangle-free and which contains at most $\delta N^{v(H)}$ copies of $H$, where $\delta \leq \varepsilon^{\omega(1)}$ and the $\omega(1)$ tends to infinity as $\varepsilon \to 0$.

  In other words, $H$ is not $K_3$-abundant.
\end{lemma}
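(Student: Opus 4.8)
The plan is to run Ruzsa and Szemer\'edi's super-polynomial lower-bound construction for the triangle removal lemma with $H$ in the role of $K_3$, exploiting that the cycle equations of $H$ have genus one. Fix the (unique) tripartition of $H$ and let $\sigma_1,\dots,\sigma_t$, $t\le 6$, enumerate the orderings of its three colour classes; by \cref{lem:genus-one} each of the sets of equations $S_{\sigma_1}(H),\dots,S_{\sigma_t}(H)$ has genus one, so \cref{lem:random-shift} (which assumes \cref{conj:ruzsa}) supplies, for every $m$, a set $R_m\subseteq[m]$ with $\ab{R_m}\ge m^{1-o(1)}$ containing no non-trivial solution to any $S_{\sigma_i}(H)$. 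Write $\ab{R_m}\ge m^{1-g(m)}$ with $g(m)\to 0$. Given small $\eps>0$, I would pick $m=m(\eps)$ with $m(\eps)\to\infty$ as $\eps\to 0$ and $\ab{R_m}\ge 324\,\eps\,m$ — possible since $\ab{R_m}/m\ge m^{-g(m)}$ and $g\to 0$ — and in fact, with a little care, one can arrange $m(\eps)=\eps^{-\omega(1)}$, which is what will ultimately force $\delta$ to be super-polynomially small. Then, for any $N\ge 9\,m(\eps)$, set $t:=\lfloor N/(9m)\rfloor$ and let $G$ be the balanced blow-up of $\RS(m,R_m)$ by $t$ together with $N-9mt$ isolated vertices, so $\ab{V(G)}=N$.

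That $G$ is $\eps$-far from triangle-free is immediate: by \cref{lem:eps-far}, $\RS(m,R_m)$ has $m\ab{R_m}$ edge-disjoint triangles, each of which blows up to a copy of $K_{t,t,t}$ containing $t^2$ edge-disjoint triangles, so $G$ has at least $t^2 m\ab{R_m}\ge \tfrac{N^2\ab{R_m}}{324\,m}\ge \eps N^2$ edge-disjoint triangles (using $N<18mt$ and $\ab{R_m}\ge 324\eps m$), which forces $G$ to be $\eps$-far from triangle-free.

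The heart of the matter is counting copies of $H$ in $G$. Since $H$ is connected — a disconnected graph is never strongly genus-one, as colouring the edges of one component black and those of another white produces no tagged cycle — the isolated vertices are irrelevant, and each homomorphism $H\to\RS(m,R_m)$ lifts to at most $t^{v(H)}$ homomorphisms into the blow-up; thus it suffices to count homomorphisms $\psi\colon H\to\RS(m,R_m)$. As $\RS(m,R_m)$ is tripartite and $H$ is uniquely $3$-colourable, each such $\psi$ respects the tripartitions, hence induces one of the $t\le 6$ orderings $\sigma_i$ of the parts. For a fixed ordering, $\psi$ is determined by its value $\psi(v_0)\in[3m]$ at a root vertex $v_0$ and by the tuple $(x_e)_{e\in E(H)}\in R_m^{E(H)}$ recording the (suitably normalised) weighted edge-differences of $\psi$; this tuple satisfies exactly the cycle equations that make up $S_{\sigma_i}(H)$, and since $R_m$ admits no non-trivial solution to $S_{\sigma_i}(H)$, the only such tuples are the $\ab{R_m}$ constant ones. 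Hence there are at most $3m\ab{R_m}$ homomorphisms of each type, at most $18\,m\ab{R_m}$ in total, and at most $18\,t^{v(H)}m\ab{R_m}$ copies of $H$ in $G$. Therefore
\[
  \delta \;:=\; \frac{\#\{\text{copies of }H\text{ in }G\}}{N^{v(H)}}
  \;\le\; \frac{18\,t^{v(H)}m\ab{R_m}}{(9mt)^{v(H)}}
  \;=\; \frac{18}{9^{v(H)}}\cdot\frac{\ab{R_m}}{m^{v(H)-1}}
  \;\le\; \frac{18}{9^{v(H)}}\cdot\frac{1}{m(\eps)},
\]
using $N\ge 9mt$, $\ab{R_m}\le m$, and $v(H)\ge 3$. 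As $m(\eps)=\eps^{-\omega(1)}$, this gives $\delta\le\eps^{\omega(1)}$, so $H$ is not $K_3$-abundant.

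I expect the delicate points to be two. First, the counting step: one must check carefully that homomorphisms of $H$ into $\RS(m,R_m)$ of a given type correspond bijectively to pairs (root value, solution tuple), and that a tuple with entries in $R_m$ satisfying all cycle equations must be constant — which is precisely the genus-one conclusion of \cref{lem:genus-one}. Second, the parameter choice: extracting from $\ab{R_m}\ge m^{1-o(1)}$ a value $m(\eps)$ that is simultaneously super-polynomial in $1/\eps$ and small enough that $\RS(m,R_m)$ stays within a constant factor of being $\eps$-far from triangle-free. Both are routine but demand some bookkeeping.
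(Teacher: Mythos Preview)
Your proposal is correct and follows essentially the same approach as the paper's proof: invoke \cref{lem:genus-one} and \cref{lem:random-shift} for all six orderings, choose $m=\varepsilon^{-\omega(1)}$ so that the resulting $R$ has size $\Theta(\varepsilon m)$, count homomorphisms $H\to\RS(m,R)$ by fixing an ordering, a root vertex, and the (necessarily constant) edge-label tuple, and then blow up. The only cosmetic differences are that the paper assumes $9m\mid N$ rather than adding isolated vertices, and it verifies that the blow-up is $\varepsilon$-far from triangle-free via a random-induced-copy argument rather than by directly counting edge-disjoint triangles in $K_{t,t,t}$.
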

\begin{proof}
	As $H$ is uniquely 3-colorable, it has six tripartitions $A \cup B \cup C$ (namely the six permutations of $(A,B,C)$). By \cref{lem:genus-one}, each of the sets of equations $S_{A,B,C}(H)$ has genus one. So by \cref{lem:random-shift}, for every integer $m$, there is a set $R \subseteq [m]$ with $\ab R \geq m^{1-o(1)}$ which has no non-trivial solutions to any of these sets of equations.

  Let $m$ be the maximum integer so that there exists such a set $R$ of size $\ab R \geq 1000\varepsilon m$, and note that $m = \varepsilon^{-\omega(1)}$. Let $\Gamma=\RS(m,R)$. By \cref{lem:eps-far}, $\Gamma$ has $9m$ vertices and contains a collection of $m\ab R \geq 1000 \varepsilon m^2$ edge-disjoint triangles, so $\Gamma$ is $\varepsilon$-far from being triangle-free.

  The key claim is that there are at most $18m^2$ homomorphisms $H \to \Gamma$. Indeed, note that a homomorphism $H \to \Gamma$ in particular yields a proper 3-coloring of $H$, and since $H$ is uniquely 3-colorable, there are exactly six such colorings. Fix one of them, and say that it maps the parts $A,B,C$ of $H$ into the parts $\A,\B,\C$ of $\Gamma$, respectively. Every edge $e$ of $H$ gets mapped into some edge of $\Gamma$, and edges of $\Gamma$ are naturally labeled by elements of $R$. So this homomorphism gives us an assignment of the variables $\{x_e\}_{e \in E(H)}$ to values in $R$. Moreover, as every cycle in $H$ must map to a closed walk in $\Gamma$, we see that for every cycle in $H$, the corresponding equation in $S_{A,B,C}(H)$ is satisfied by this assignment of the variables.

  But we assumed that $R$ has no non-trivial solutions to $S_{A,B,C}(H)$, so we must have that every edge of $H$ is actually assigned to the same value of $R$. But having fixed this value, we see that it corresponds to at most $3m$ homomorphisms: once we determine where in $\A$ to send a single vertex of $A$, the locations of all remaining vertices are uniquely determined by the connectivity of $H$ ($H$ is connected since it is uniquely 3-colorable). So in total, there are at most $6\cdot 3m\ab R \leq 18m^2$ homomorphisms $H \to \Gamma$. Note that we used the fact that $R$ has no non-trivial solutions to \emph{any} of the six sets of equations arising from permuting $A,B,C$, so the argument above works regardless of how $\{A,B,C\}$ are mapped into $\{\A,\B,\C\}$ (and the union over all these contributions gives us the extra factor of six in the count of homomorphisms).

  To conclude the proof, let $N$ be sufficiently large, and suppose for simplicity that $N$ is a multiple of $9m$, say $N=9mt$. Let $G$ the balanced blowup $\Gamma[t]$, which has $N$ vertices. $G$ is still $\varepsilon$-far from being triangle-free. Indeed, suppose we delete fewer than $\varepsilon N^2$ edges from $G$. Sample a random copy of $\Gamma$ in $G$ by picking one uniformly random vertex from each blowup part of $G$. In expectation, we delete fewer than $\varepsilon \ab{V(\Gamma)}^2$ from this copy of $\Gamma$, so there exists a copy of $\Gamma$ in $G$ from which we deleted fewer than $\varepsilon \ab{V(\Gamma)}^2$ edges. As $\Gamma$ is $\varepsilon$-far from being triangle-free, this yields a triangle in $G$ which survives the edge-deletion, proving that $G$ is also $\varepsilon$-far from being triangle-free.

  Moreover, every homomorphism $H \to \Gamma$ yields at most $t^{\ab{V(H)}}$ copies of $H$ in $G$. So the total number of copies of $H$ in $G$ is at most
  \[
    18m^2 \cdot t^{\ab{V(H)}} = \frac{18m^2}{(9m)^{\ab{V(H)}}} N^{\ab{V(H)}} \eqqcolon \delta N^{\ab{V(H)}},
  \]
  where $\delta = O(m^{2-\ab{V(H)}}) \leq O(1/m) = \varepsilon^{\omega(1)}$, as claimed.
\end{proof}

\subsection{Pseudorandom graphs are strongly genus-one} \label{subsec:pseudorandom}

Given \cref{thm:superpoly}, all that remains to prove \cref{thm:main-non-abundant} is to show that there exist triangle-free strongly genus-one graphs.
In fact, we show that any tripartite graph satisfying appropriate pseudorandomness conditions is strongly genus-one; the main difficulty is finding a set of pseudorandomness conditions which are satisfied by an appropriate random graph, and which are strong enough to imply that the graph is strongly genus-one. The following proposition summarizes this set of pseudorandomness conditions. In a tripartite graph with parts $A,B,C$, given $X \subseteq A, Y \subseteq B$, let $N(X,Y)$ be the set of all vertices in $C$ with at least one neighbor in $X$ and at least one neighbor in $Y$.
For $Z \subseteq C$, let $N_A(Z)$ denote the set of vertices in $A$ adjacent to at least one vertex of $Z$.

\begin{proposition}\label{thm:pseudorandom}
  For all sufficiently large $n$, there exists a tripartite graph $H$ with parts $A,B,C$ with $\ab A = \ab B = \ab C=n$, having the following properties.
  \begin{propenum}
    \item If $F$ is a subgraph of $H[A\cup B]$ with at least half the edges of $H[A \cup B]$, then there exist $A' \subseteq A, B' \subseteq B$ with $F[A' \cup B']$ connected and $\ab{A'},\ab{B'} \geq n/10$. \label{it:connected}
    \item For all subsets $X \subseteq A, Y \subseteq B, Z \subseteq C$ of order at least $n/10$, we have that $\ab{N(X,Y)} \geq 99n/100$, and similarly for $\ab{N(X,Z)}$ and $\ab{N(Y,Z)}$. \label{it:common-nbhd}
    \item For all subsets $X \subseteq A, Y \subseteq B, Z \subseteq C$ of order at most $n/12$, we have that $\ab{N_A(Z)}>2\ab Z$, and similarly for all other pairs.\label{it:expansion}
    \item $H$ is triangle-free.\label{it:triangle-free}
    \item Up to permuting the colors, $H$ has a unique proper $3$-coloring.\label{it:uniquely-colorable}
  \end{propenum}
\end{proposition}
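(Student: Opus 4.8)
The plan is to take $H$ to be a random tripartite graph with one edge deleted from each triangle. Concretely, I would let $G_0$ be the random graph on parts $A,B,C$ of size $n$ in which each of the $3n^2$ potential edges is present independently with probability $p=n^{-2/3}$ (any $p$ with $\log^2 n/n\ll p\ll n^{-1/2}$ works just as well), let $\mathcal D$ be the set of all $A$--$B$ edges of $G_0$ that lie in some triangle of $G_0$, and set $H:=G_0\setminus\mathcal D$. Every triangle of $G_0$ uses an $A$--$B$ edge, so $H$ is triangle-free, giving \ref{it:triangle-free}. The choice of $p$ is governed by a tension: $p$ must be small enough that $G_0$ has few triangles --- for $p=n^{-2/3}$ one has $\E[\#\text{triangles}]=\Theta(n^3p^3)=\Theta(n)$, so \whp $|\mathcal D|=O(n)$, which is a $o(1)$-fraction of the $\Theta(n^{4/3})$ edges of $G_0[A\cup B]$, and moreover \whp every vertex lies in only $O(\log n)$ triangles and hence loses only $O(\log n)$ of its $\Theta(n^{1/3})$ incident edges --- while $p$ must be large enough, in particular $pn\gg\log n$, for the union bounds behind the pseudorandomness properties to converge. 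With this setup, properties \ref{it:connected}--\ref{it:uniquely-colorable} all follow from standard Chernoff and first-moment estimates for $G_0$, the guiding point being that deleting the $O(n)$ edges of $\mathcal D$ disturbs nothing essential.

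For \ref{it:common-nbhd}: for fixed $X\subseteq A$, $Y\subseteq B$ with $|X|,|Y|\ge n/10$, a vertex $c\in C$ lies outside $N(X,Y)$ only if it misses $X$ or misses $Y$, with probability at most $2e^{-pn/10}$; since these events are independent over $c\in C$, a Chernoff bound gives $|C\setminus N(X,Y)|<n/100$ except with probability $e^{-\Omega(n^{4/3})}$, which beats the $2^{2n}$ choices of $(X,Y)$ (and symmetrically for the other pairs). Property \ref{it:expansion} is analogous: for $Z\subseteq C$ with $|Z|\le n/12$ the indicators $\mathbbm{1}[u\in N_A(Z)]$, $u\in A$, are independent with mean $1-(1-p)^{|Z|}$, so $|N_A(Z)|$ concentrates around $n(1-(1-p)^{|Z|})$, which one checks exceeds $2|Z|$ with room to spare for a union bound over all such $Z$. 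Property \ref{it:uniquely-colorable} takes a little more care: given a proper $3$-colouring of $H$, either some colour class meets two parts in at least $n/100$ vertices each --- ruled out since \whp $e_H(X,Y)>0$ whenever $X,Y$ lie in different parts with $|X|,|Y|\ge n/100$ (a property untouched by deleting $O(n)$ edges) --- or, matching colour classes to parts, the set $M$ of misplaced vertices satisfies $|M|<3n/50$, in which case each $v\in M$ is non-adjacent in $H$ to a linear-sized subset of one of the parts, hence (since $v$ lies in $O(\log n)$ triangles) has only $O(\log n)$ $G_0$-neighbours there, an event of probability $e^{-\Omega(n^{1/3})}$ per misplaced vertex; a union bound over the choice of $M$ finishes this case, leaving only the six permutations of $(A,B,C)$.

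The hard part will be \ref{it:connected}, the one step using more than degree information. Given $F\subseteq H[A\cup B]$ with at least half the edges, note $e(F)\ge(\tfrac{1}{2}-o(1))pn^2$ as \whp $e(H[A\cup B])=(1\pm o(1))pn^2$. It suffices to find a connected component of $F$ with $\ge n/10$ vertices in each of $A,B$. Suppose there is none; then every component, with $A$-side $X_j$ and $B$-side $Y_j$, has $|X_j|<n/10$ or $|Y_j|<n/10$, and I would split the components into those with $|X_j|<n/10$ and (among the rest) those with $|Y_j|<n/10$. For a component of the first type, $e(F_j)\le e_H(X_j,Y_j)\le p|X_j||Y_j|+Cn^{1/6}(|X_j|+|Y_j|)$ by the standard bipartite edge-discrepancy (``jumbledness'') estimate for $G_0$ --- again unaffected by the deletion --- and, since $|X_j|<n/10$ and the $Y_j$ are pairwise disjoint, the main terms sum to at most $p\cdot\tfrac{n}{10}\cdot n=\tfrac{1}{10}pn^2$ while the error terms sum to at most $Cn^{1/6}\cdot 2n=o(pn^2)$ (because $\sum_j|X_j|\le n$ and $\sum_j|Y_j|\le n$). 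The second type contributes the same bound by symmetry, so $e(F)\le\tfrac{1}{5}pn^2+o(pn^2)<(\tfrac{1}{2}-o(1))pn^2\le e(F)$, a contradiction; hence \ref{it:connected} holds.

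The recurring subtlety is that $H$ is $G_0$ with edges removed, so each property must be checked to be robust under this removal; I would handle this uniformly with the two bounds $|\mathcal D|=O(n)$ and ``every vertex lies in $O(\log n)$ triangles'', which show the deletion is negligible both globally (against the $\Theta(n^{4/3})$ total edges) and locally (against degree $\Theta(n^{1/3})$). Beyond that everything is routine concentration, the only genuinely non-routine ingredient being the edge-discrepancy estimate invoked in \ref{it:connected}.
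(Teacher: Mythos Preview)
Your proposal is correct and follows the same overall strategy as the paper: sample a random tripartite graph at density $p=n^{-\alpha}$, delete edges to kill all triangles, and verify (i)--(v) via concentration plus robustness under the deletion. The paper takes $p=n^{-3/4}$ and deletes one arbitrary edge per triangle; your $p=n^{-2/3}$ also works but sits exactly at the threshold where the number of triangles is $\Theta(n)$ rather than $o(n)$, so the global bound $|\mathcal D|=O(n)$ by itself does \emph{not} transfer property (ii) from $G_0$ to $H$ for the pairs $N(X,Z)$ and $N(Y,Z)$ (which involve $A$--$B$ edges) --- you genuinely need the per-vertex triangle bound here, plus one more Chernoff step to show that for every $X\subseteq A$ with $|X|\ge n/10$, fewer than $n/200$ vertices $b\in B$ satisfy $d_{G_0}(b,X)\le C\log n$. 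Two sub-arguments differ in flavour: for (i) the paper runs Chernoff directly on $\sum_j e_{G_0}(A_j,B_j)$ and union-bounds over all $\le n^{2n}$ partitions, avoiding any spectral input, whereas you invoke the discrepancy bound $e_{G_0}(X,Y)\le p|X||Y|+O(\sqrt{np})\sqrt{|X||Y|}$; and for (v) the paper argues \emph{deterministically} from property (iii) (expansion of the misplaced set forces an edge within a colour class), whereas you give a direct first-moment computation over all small misplaced sets $M$. Both variants are valid.
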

We prove \cref{thm:pseudorandom} by considering a random tripartite graph with edge density $p=n^{-3/4}$, and then deleting one edge from each triangle; we defer the proof to \cref{sec:random-graphs}. We remark that many other pseudorandom graphs would work; for example, there is nothing special about $p=n^{-3/4}$, and edge density $p=n^{-1+\theta}$ for any $0<\theta<1/3$ would work as well. As another example, the tripartite triple cover of Alon's explicit pseudorandom triangle-free graph \cite{Alon94} gives an explicit family of graphs satisfying the properties of \cref{thm:pseudorandom}.

The key claim, which completes the proof of \cref{thm:main-non-abundant}, is that this set of pseudorandomness conditions is enough to guarantee that $H$ is strongly genus-one. 

\begin{lemma}\label{thm:pseudorandom-implies-genus-one}
  If $H$ satisfies the conditions of \cref{thm:pseudorandom}, then $H$ is strongly genus-one.
\end{lemma}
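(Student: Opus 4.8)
The conclusion has two parts. That $H$ is uniquely $3$-colourable is exactly property~\ref{it:uniquely-colorable}, so all the work lies in showing that every $2$-colouring of $E(H)$ into colours ``white'' and ``black'', with at least one edge of each colour, contains a tagged cycle. The plan is to fix such a colouring, suppose there is no tagged cycle, and reach a contradiction. Write $\widehat W$ for the spanning subgraph of $H$ given by the white edges. I would first record two ways in which the absence of tagged cycles constrains $\widehat W$. (1)~Every connected component $K$ of $\widehat W$ induces an all-white subgraph of $H$: a black edge $uv$ with $u,v\in K$, together with a shortest white $u$--$v$ path in $K$, would be a cycle with exactly one black edge, i.e.\ a tagged cycle. (2)~If $aw$ and $wb$ are black edges with $a,w,b$ in three distinct parts, and if $a,b$ lie in a common component $K$ of $\widehat W$ with $w\notin K$, then $aw,wb$ together with a white $a$--$b$ path in $K$ form a tagged cycle of the second kind. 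In both cases triangle-freeness (property~\ref{it:triangle-free}) guarantees that the walks produced are genuine simple cycles (of length at least~$4$).

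The heart of the proof is to produce a \emph{single} component $W^\ast$ of $\widehat W$ meeting each of $A,B,C$ in at least $99n/100$ vertices. By pigeonhole, two of the three bipartite pieces $H[A\cup B],H[B\cup C],H[C\cup A]$ share a majority colour; recolouring so this colour is white (the tagged-cycle condition is colour-symmetric), and using that the construction --- hence all of properties~\ref{it:connected}--\ref{it:uniquely-colorable} --- is symmetric in $A,B,C$, I may assume $H[A\cup B]$ has at least half its edges white. Property~\ref{it:connected} then yields $A_1\subseteq A$, $B_1\subseteq B$ of size $\ge n/10$ on which the white subgraph is connected; let $K$ be the component of $\widehat W$ containing $A_1\cup B_1$. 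Now I would prove a \emph{promotion} lemma: if $K$ is a component of $\widehat W$ with $|K\cap A|,|K\cap B|\ge n/10$, then $|K\cap C|\ge 99n/100$ (and cyclically). By property~\ref{it:common-nbhd}, all but at most $n/100$ vertices $w\in C$ have a neighbour $a\in K\cap A$ and a neighbour $b\in K\cap B$; for such $w$, if either $wa$ or $wb$ is white then $w\in K$ (and then both are white, by~(1)), while if both are black then~(2) produces a tagged cycle. Hence all such $w$ lie in $K\cap C$, proving the lemma. Applying it to our $K$, and then twice more in the cyclic directions, gives $|K\cap A|,|K\cap B|,|K\cap C|\ge 99n/100$; put $W^\ast:=K$.

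It remains to control the leftover sets $R_A=A\setminus W^\ast$, $R_B=B\setminus W^\ast$, $R_C=C\setminus W^\ast$, each of size $\le n/100$. If all are empty then every edge of $H$ is white, contradicting the presence of a black edge; so at least one of $R_A,R_B,R_C$ is nonempty. Take a part, say $A$, and $u\in R_A$. As $u\notin W^\ast$, $u$ has no white neighbour in $W^\ast$; and $u$ cannot have a neighbour in both $W^\ast\cap B$ and $W^\ast\cap C$, since those edges would be black and yield a tagged cycle via~(2). Thus $u$ sends all its $B$-edges into $R_B$, or all its $C$-edges into $R_C$; letting $X_A,Y_A\subseteq R_A$ be the two corresponding sets (so $R_A=X_A\cup Y_A$), we have $N_B(X_A)\subseteq R_B$ and $N_C(Y_A)\subseteq R_C$, where $N_B,N_C$ denote neighbourhoods in the indicated parts. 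Since $|X_A|,|Y_A|\le n/12$, property~\ref{it:expansion} gives $|R_B|\ge|N_B(X_A)|>2|X_A|$ when $X_A\ne\varnothing$, and $|R_C|>2|Y_A|$ when $Y_A\ne\varnothing$, so $|R_A|\le|X_A|+|Y_A|\le\tfrac12(|R_B|+|R_C|)$, with strict inequality unless $R_A=\varnothing$. Adding this to its two cyclic analogues, at least one of which is strict, yields $|R_A|+|R_B|+|R_C|<|R_A|+|R_B|+|R_C|$, a contradiction, completing the proof.

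The step I expect to be the main obstacle is the promotion lemma together with the surrounding bootstrap: a priori $\widehat W$ could split into several components each of size $\Theta(n)$, and no single pseudorandomness property rules this out; it is only the combination of property~\ref{it:connected} (a linear-sized white component in some bipartite piece), property~\ref{it:common-nbhd} (huge common neighbourhoods), and the no-tagged-cycle hypothesis that forces one component to swallow almost everything. Besides that, the main thing requiring care is the bookkeeping: making the two kinds of tagged cycle --- in particular the ``two consecutive black edges across three parts'' case --- interact correctly with the counting, and checking that every closed walk one writes down is a genuine cycle, which is exactly where triangle-freeness gets used. The remaining ingredients (the pigeonhole over the three pieces and the final expansion estimate) are routine.
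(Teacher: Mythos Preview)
Your proof is correct and follows essentially the same approach as the paper's: observations (1) and (2) are precisely Lemmas~\ref{lem:same-part} and~\ref{lem:other-part}, and your promotion lemma mirrors the paper's bootstrap from $(A_1,B_1)$ up to sets of size at least $99n/100$ in each part. The only real difference is the endgame---where the paper takes a maximal all-white triple $(A',B',C')$, picks the part with the largest leftover, and derives a contradiction from a single application of \ref{it:expansion}, you instead sum the symmetric inequalities $2|R_A|\le |R_B|+|R_C|$ (strict when $R_A\ne\varnothing$) over all three parts---but both finishes are short and correct.
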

From \cref{thm:pseudorandom,thm:pseudorandom-implies-genus-one}, we find that there exists a triangle-free strongly genus-one graph. Combined with \cref{thm:superpoly}, this implies that there exists a triangle-free tripartite graph which is not $K_3$-abundant (assuming \cref{conj:ruzsa}), which proves \cref{thm:main-non-abundant}.

The rest of this section is dedicated to proving \cref{thm:pseudorandom-implies-genus-one}.
So suppose we are given a tripartite graph $H$ which satisfies the conditions of \cref{thm:pseudorandom}, and we wish to prove that $H$ is strongly genus-one. By \cref{it:uniquely-colorable}, we know that $H$ is uniquely 3-colorable, so we only need to prove that in any two-coloring of $E(H)$ with at least one edge of each color, there is a tagged cycle.

The following are two useful lemmas allowing us to find tagged cycles.
\begin{lemma}\label{lem:same-part}
  Let $H$ be a tripartite graph with parts $A,B,C$, and suppose that the edges of $H$ are colored white or black.
  Suppose that $X \subseteq A, Y \subseteq B$ are such that $X\cup Y$ lies in a connected component of the white subgraph of $H$. If there is no tagged cycle, then every edge in $X \times Y$ is white.
\end{lemma}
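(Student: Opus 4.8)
\textbf{Proof proposal for \cref{lem:same-part}.}

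The plan is a short argument by contradiction. Suppose some edge $e = xy$ of $H$ with $x \in X$ and $y \in Y$ is colored black; I will construct a tagged cycle from it, contradicting the hypothesis. First I would invoke the assumption that $X \cup Y$ lies in a single connected component of the white subgraph of $H$: this means that $x$ and $y$ are joined by a walk using only white edges, and hence (by deleting repetitions, or equivalently by taking a shortest such walk) by a \emph{simple} white path $P$ from $x$ to $y$. The key observation is that $P$ has length at least $2$, because it cannot be the single edge $xy$: that edge is black, not white.

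Next I would form the cycle $Q$ obtained by appending the chord $e$ to $P$. Since $P$ is a simple path and its only vertices in common with $e$ are the endpoints $x$ and $y$, the result $Q$ is a genuine simple cycle in $H$, of length at least $3$. Every edge of $Q$ other than $e$ lies on $P$, and is therefore white, while $e$ itself is black; so $Q$ contains exactly one black edge and all of its other edges are white. This is precisely the first case in the definition of a tagged cycle, so $Q$ is tagged — contradicting the assumption that $H$ has no tagged cycle. Hence no such black edge $e$ can exist, i.e.\ every edge of $H$ between $X$ and $Y$ is white.

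I do not expect any serious obstacle here: the only point requiring (minor) care is to make sure that adding the chord $e$ to the white path $P$ yields a bona fide simple cycle (of length $\geq 3$) rather than a degenerate configuration, which is exactly why I insist on choosing $P$ to be a simple path and observe that it has length at least $2$. The entire content of the lemma is the combination of the white-connectivity hypothesis on $X \cup Y$ with the first clause of the definition of a tagged cycle.
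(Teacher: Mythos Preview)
Your argument is correct and follows the same approach as the paper's proof: assume a black edge $xy$, find a white path from $x$ to $y$ using the connectivity hypothesis, and close it up with $xy$ to obtain a tagged cycle of the first type. You have merely been more explicit about the minor point that the resulting cycle is simple and has length at least $3$.
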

\begin{proof}
  Suppose for contradiction that there is a black edge $xy$. Pick a white path connecting $x$ and $y$, which exists since we assumed that $X\cup Y$ lies in a connected component of the white graph. This yields a cycle with exactly one black edge, which is tagged.
\end{proof}
\begin{lemma}\label{lem:other-part}
  Let $H$ be a tripartite graph with parts $A,B,C$, and suppose that the edges of $H$ are colored white or black.
  Suppose that $X \subseteq A, Y \subseteq B$ are such that the $X\cup Y$ lies in a connected component of the white subgraph of $H$. Let $Z=N(X,Y)\subseteq C$. If there is no tagged cycle, then every edge in $(X \times Z) \cup (Y \times Z)$ is white.
\end{lemma}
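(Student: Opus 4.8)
The plan is to argue by contradiction, exactly in the spirit of \cref{lem:same-part}: assuming there is no tagged cycle, I will show that no edge between $X$ and $Z$ can be black, and the statement for edges between $Y$ and $Z$ will then follow by symmetry. So suppose $xz \in E(H)$ is black with $x \in X$ and $z \in Z$, and let $W$ be the connected component of the white subgraph of $H$ containing $X \cup Y$ (well-defined by hypothesis, since $X\cup Y$ lies in a single white component). The argument splits according to whether $z \in W$.

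If $z \in W$, then $x$ and $z$ lie in a common white component, so there is a white path $P$ from $x$ to $z$; since $xz$ is black, $P$ does not use the edge $xz$ and has length at least $2$. Then $P$ together with the edge $xz$ is a cycle of length at least $3$ whose only black edge is $xz$, i.e.\ a tagged cycle of the first type, a contradiction.

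If $z \notin W$, I first note that $z$ has a neighbour $y \in Y$, because $z \in Z = N(X,Y)$, and that the edge $yz$ must be black: were it white, $z$ would lie in the white component of $y$, which is $W$ (as $Y \subseteq W$), contradicting $z \notin W$. Now take any white path $P$ from $x$ to $y$, which exists because $x, y \in W$; crucially, every vertex of $P$ lies in $W$, so $z \notin V(P)$, and therefore $P$ together with the two edges $yz$ and $zx$ is a genuine cycle of length at least $3$. Its only black edges are $yz$ and $zx$, which are consecutive (sharing $z$), and since $x \in A$, $z \in C$, $y \in B$ these edges straddle all three parts; hence this cycle is tagged of the second type, again a contradiction. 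Finally, the claim for edges between $Y$ and $Z$ follows by running the same argument with the roles of $A$ and $B$ — and of $X$ and $Y$ — interchanged; this is legitimate because $N(X,Y) = N(Y,X)$ and both kinds of tagged cycle are invariant under permuting the three parts.

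The part that needs care is not any computation but the transition from a closed walk to an honest cycle of length at least $3$ with the black edges in exactly one of the two permitted patterns. This is precisely what dictates the case distinction on whether $z \in W$: in the first case the white object joining $x$ to $z$ is taken to be a path, so appending the single black edge $xz$ yields a cycle; in the second case the essential observation is that a white path from $x$ to $y$ cannot pass through $z$ (it never leaves $W$), so appending the two black edges $yz$ and $zx$ really does close up into a cycle, and these two black edges are then automatically consecutive and between three different parts. Beyond this bookkeeping, the proof is short.
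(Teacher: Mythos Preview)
Your proof is correct and follows essentially the same contradiction strategy as the paper, but with a slightly different case split. The paper fixes a white path from $x$ to $y$ and splits on whether $z$ lies on that path (if not, the cycle through $x,y,z$ has one or two black edges; if so, the subpath from $z$ to $y$ plus the edge $yz$ gives a cycle with exactly one black edge). You instead split on whether $z$ lies in the white component $W$. Your split is arguably cleaner: when $z\in W$ you bypass $y$ entirely and close up a white $x$--$z$ path with the single black edge $xz$; when $z\notin W$, both facts you need --- that the white $x$--$y$ path avoids $z$ and that $yz$ is black --- are immediate from $z\notin W$, so no index-chasing is required. The two arguments are minor variants of one another.
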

\begin{proof}
  Suppose for contradiction that there is a black edge $yz \in Y \times Z$. By the definition of $Z = N(X,Y)$, we know that $z$ has some neighbor $x \in X$. Pick a white path $x = v_0, v_1, \dots, v_k=y$ connecting $x$ and $y$. Suppose first that $z$ is not on this path, i.e.\ $z \notin \{v_1,\dots,v_k\}$. Then we have a cycle $x, v_1, \dots, v_{k-1}, y, z, x$. In this cycle, the edge $yz$ is black, the edge $zx$ may be black or white, and all remaining edges are white. Regardless of the color of $zx$, we obtain a tagged cycle.

  On the other hand, suppose that $v_i = z$ for some $i$. Note that $i < k-1$ as the edge $yz$ is black and we assumed that the edge $v_{k-1} y$ is white. This implies that we get a cycle $z, v_{i+1},\dots,v_{k-1}, y, z$. In this cycle, the edge $yz$ is black and the remaining edges are white, so we again find a tagged cycle.

  The same argument, upon interchanging the roles of $X$ and $Y$, shows that no edge $xz \in X \times Z$ can be black.
\end{proof}

We are now ready to complete the proof. The basic idea is to repeatedly use \cref{lem:other-part,lem:same-part}, as well as the pseudorandomness conditions guaranteed by \cref{thm:pseudorandom} to gradually improve our understanding of the coloring, until we eventually show that if there is no tagged cycle, then all the edges must be of the same color.
\begin{proof}[Proof of \cref{thm:pseudorandom-implies-genus-one}]
Let $H$ be a graph satisfying the conditions of \cref{thm:pseudorandom}, and suppose for contradiction that there is a coloring of $H$ with no tagged cycle. We will show that $H$ is colored monochromatically.

Suppose without loss of generality that at least half the edges in $H[A \cup B]$ are white. By \cref{it:connected}, there exist $A_1 \subseteq A, B_1 \subseteq B$ such that the white graph on $H[A_1 \cup B_1]$ is connected, with $\ab{A_1},\ab{B_1} \geq n/10$. Let $C_1 = N(A_1,B_1)$. By \cref{it:common-nbhd}, we know that $\ab{C_1} \geq 99n/100$. Moreover, by \cref{lem:other-part,lem:same-part}, we have that $H[A_1 \cup B_1 \cup C_1]$ is white, since the connectivity of the white graph on $H[A_1 \cup B_1]$ implies that $A_1 \cup B_1$ lies in a connected component of the white subgraph.

Now, let $A_2 = N(B_1,C_1)$. Note that $B_1 \cup C_1$ lies in a connected component of the white subgraph, since every vertex in $C_1$ has at least one neighbor in $B_1$, and since $B_1$ lies in a connected component of the white subgraph. Therefore, $H[A_2 \cup B_1 \cup C_1]$ is white by \cref{lem:same-part,lem:other-part}. As $\ab{B_1},\ab{C_1} \geq n/10$, we also see that $\ab{A_2} \geq 99n/100$ (again by \cref{it:common-nbhd}). Similarly, letting $B_2 = N(A_2,C_1)$, we find that $\ab{B_2}\geq 99n/100$ and $H[A_2 \cup B_2 \cup C_1]$ is white.

Now, let $(A',B',C')$ be a maximal triple of sets so that $H[A' \cup B' \cup C']$ is white. By the above, we know that $\ab {A'}, \ab {B'}, \ab {C'} \geq 99n/100$. We claim that in fact, $\ab {A'}= \ab {B'}= \ab {C'}=n$, meaning that $H$ is monochromatically white. So suppose this is not the case. Without loss of generality, assume that $\ab{C'} \leq\ab {A'}, \ab {B'} $, and let $W = C \setminus C'$.

Suppose there is a vertex $w \in W$ with at least one neighbor in $A'$ and at least one neighbor in $B'$. Then $w \in N(A',B')$, so by \cref{lem:other-part}, all edges in $\{w\} \times (A' \cup B')$ are white. This means that we may add $w$ to $C'$, contradicting maximality of $(A',B',C')$. So every vertex in $W$ is non-adjacent to at least one of $A',B'$. Without loss of generality, there is $W' \subseteq W$ with $\ab{W'} \geq \frac 12 \ab W$ so that every vertex in $W'$ is non-adjacent to $A'$. This means that $N_A(W') \subseteq A \setminus A'$. But by \cref{it:expansion} (which we may apply since $\ab{W'} \leq \ab W \leq n/100 \leq n/12$),
\[
  \ab{C \setminus C'} =\ab W \leq 2 \ab{W'} < \ab{N_A(W')} \leq \ab{A \setminus A'},
\]
which contradicts our assumption that $\ab{A'} \geq \ab{C'}$. This contradiction completes the proof.
\end{proof}

\section{Proof of Proposition \ref{thm:pseudorandom}}\label{sec:random-graphs}
To complete the proof of \cref{thm:main-non-abundant}, it remains to prove \cref{thm:pseudorandom}. The various parts of \cref{thm:pseudorandom} are essentially independent of one another, so the proof more or less breaks down into a number of lemmas about random graphs, which we now state and prove. Recall that an event $E$ is said to happen \emph{with high probability (w.h.p.)}\ if $\pr(E) \to 1$ as $n \to \infty$, where the implicit parameter $n$ will always be clear from context.

\begin{lemma}\label{lem:connected}
  Let $H$ be a random bipartite graph with vertex set $A \cup B$, where $\ab A = \ab B=n$, and with edge density $p=n^{-3/4}$. The following holds w.h.p.\ as $n \to \infty$. If $F\subseteq H$ satisfies $e(F) \geq \frac 13 e(H)$, then there exist $A' \subseteq A, B' \subseteq B$ with $\ab{A'},\ab{B'} \geq n/10$ such that $F[A' \cup B']$ is connected.
\end{lemma}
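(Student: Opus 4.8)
The plan is to show that with high probability, the random bipartite graph $H$ has a strong expansion/connectivity property, and then to deduce the statement by a standard "peeling off low-degree vertices" argument applied to any dense subgraph $F$. First I would record the basic parameters: $e(H)$ is concentrated around $pn^2 = n^{5/4}$ w.h.p.\ (by Chernoff, since $e(H) \sim \Bin(n^2,p)$), so any $F$ with $e(F) \geq \frac13 e(H)$ has at least $(1-o(1))\frac13 n^{5/4}$ edges. The key structural fact I would establish about $H$ is a discrepancy/expansion bound: w.h.p., for every pair of sets $X \subseteq A$, $Y \subseteq B$ with $\ab X, \ab Y \geq n/20$ (say), the number of edges of $H$ between $X$ and $Y$ is $(1+o(1))p\ab X \ab Y$, and in particular is positive and indeed $\Omega(n^{5/4})$. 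This is a routine union bound: for fixed $X,Y$ the edge count is binomial with mean $p\ab X\ab Y = \Omega(n^{5/4})$, which dwarfs $\log\binom{n}{\ab X}\binom{n}{\ab Y} = O(n\log n)$, so a Chernoff bound beats the union bound over all $\leq 4^n$ choices of $(X,Y)$ comfortably. I would similarly want that no vertex has degree much larger than $2pn = 2n^{1/4}$ (again Chernoff plus union bound over $2n$ vertices), and that every set of $\leq n/20$ vertices on one side spans $o(n^{5/4})$ edges to the other side — in fact, by the max-degree bound, a set $S$ of size $s$ spans at most $2sn^{1/4}$ edges, which is $o(n^{5/4})$ as long as $s = o(n)$.

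Given these deterministic consequences, I would argue as follows. Let $F \subseteq H$ with $e(F) \geq \frac13 e(H) \geq \frac14 n^{5/4}$ for large $n$. Repeatedly delete from $F$ any vertex of degree less than $\frac{1}{100} n^{1/4}$ (on either side); call the remaining subgraph $F_0$ on parts $A_0 \subseteq A$, $B_0 \subseteq B$. The total number of edges removed in this process is at most $2n \cdot \frac{1}{100}n^{1/4} = \frac{1}{50}n^{5/4}$, so $F_0$ retains at least $\frac14 n^{5/4} - \frac1{50}n^{5/4} > 0$ edges, hence is nonempty; moreover, since every vertex of $F_0$ has $F$-degree $\geq \frac1{100}n^{1/4}$ but the $H$-max-degree is $\leq 2n^{1/4}$, and using the small-set edge bound above, I claim $\ab{A_0},\ab{B_0} \geq n/20$: if, say, $\ab{A_0} < n/20$, then $e(F_0) \leq \ab{A_0}\cdot 2n^{1/4} < \frac{1}{10}n^{5/4}$, but also $e(F_0) \geq e(F) - \frac{1}{50}n^{5/4}$; one needs to be slightly careful here since the peeling might delete many vertices — so instead I would bound $e(F_0)$ from below directly and combine with the min-degree lower bound $e(F_0) \geq \frac1{100}n^{1/4}\max(\ab{A_0},\ab{B_0})/1$... actually cleaner: since $F_0$ has min degree $\geq \frac{1}{100}n^{1/4}$ and is contained in $H$, and $H$ restricted to $A_0\times B_0$ has all degrees $\leq 2n^{1/4}$, we get $\ab{B_0} \geq \frac{1}{100}n^{1/4}$ trivially, which is too weak; the right move is to use that $e(F_0) \geq \frac{1}{5}n^{5/4}$ (choosing the peeling threshold so that at most $\frac{1}{20}n^{5/4}$ edges are lost) together with $e(F_0) \leq 2n^{1/4}\min(\ab{A_0},\ab{B_0})\cdot(\text{something})$ — more simply $e(F_0)\le 2n^{1/4}\ab{A_0}$ and $e(F_0)\le 2n^{1/4}\ab{B_0}$, forcing $\ab{A_0},\ab{B_0} \geq \frac{1}{10}\cdot\frac{n^{5/4}}{2n^{1/4}} \ge n/20$, after adjusting the constant $\frac14$ up by taking the peeling threshold small enough.

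Finally, I would show $F_0$ is connected, or at least contains a connected subgraph on parts of size $\geq n/10$ each. Suppose $F_0$ were disconnected, with a component $C$ whose $A$-side $A_C$ satisfies (WLOG) $\ab{A_C} \le \ab{A_0}/2$. Two cases: if $\ab{A_C} \ge n/20$, then by the discrepancy bound $H$ has an edge between $A_C$ and $B_0 \setminus B_C$ (since both have size $\geq n/20$ — here $\ab{B_0\setminus B_C}\geq \ab{B_0}-\ab{B_C}$ and one checks $B_C$ is also not too large by the same degree argument, or one takes the component with both sides small), contradicting that $C$ is a full component of $F$... wait, the edge is in $H$ not $F$; so this case needs the other side. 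Let me instead split on the size of the smaller side of a hypothetical vertex cut, and use that a component $C$ of $F_0$ with $\ab{A_C}, \ab{B_C} \geq n/20$ cannot coexist with another component also reaching that size, since $A\cup B$ has only $n+n$ vertices and each "large" component uses $\geq n/20$ from each side — so at most one component has both sides $\geq n/20$; every other component $C'$ has (say) $\ab{A_{C'}} < n/20$, but then its edges number $< \ab{A_{C'}}\cdot 2n^{1/4} < \frac1{10}n^{5/4}$... summing over all small components still could be large. The clean fix, which I expect to be the main obstacle and which I would handle carefully: iterate the peeling argument inside each component, or directly invoke that after peeling, if $F_0$ is disconnected then some component $C$ has $\ab{A_C}\le \ab{A_0}/2 \le n/2$ and $\ab{B_C}\le n/2$, and since min-degree in $C$ is $\geq \frac1{100}n^{1/4}$ we get $\ab{A_C},\ab{B_C} \geq \frac1{100}n^{1/4}$; if both are $\geq n/20$ then the discrepancy bound gives an $H$-edge from $A_C$ to $B\setminus B_C$ within $A_0\cup B_0$ — but this need not be an $F$-edge. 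So the genuinely correct approach: among components pick one with $\ab{B_C}$ between $n/20$ and $n/2$ (possible if no component dominates), hmm. I would ultimately structure it as: show directly that the "white-heavy part" $F_0$ has a subgraph which is an expander, using that in $F_0$ every set $S$ of size $s \le n/20$ on one side has $\geq \frac1{100}n^{1/4} \cdot s / (2n^{1/4}) \cdot (\ldots)$ neighbors — the right statement is that since $F_0 \subseteq H$ and $H$ has the property that any $n/20$-set has $\ge 99n/100$ neighbors (a consequence of the discrepancy bound at scale $n/20$ vs.\ $n/20$: if $S\subseteq A$, $\ab S=n/20$ had $<n/20$ neighbors, the non-neighborhood $\bar N$ has size $\ge 19n/20$ and $e_H(S,\bar N)=0$, contradicting discrepancy). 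So: in $F_0$, any connected component $C$ with $\ab{A_C}\ge n/20$ must — since those $\ge n/20$ vertices have $\ge 99n/100$ $H$-neighbors, all of which in $F_0$ have $F_0$-degree $\ge \frac1{100}n^{1/4}$ hence lie in $B_0$... — force $\ab{B_C}\ge n/10$ once we know each such neighbor's edges into $A$ stay in $F$; which again is false in general. I will therefore in the writeup adopt the standard trick: choose the peeling threshold and apply the discrepancy bound at the right scale so that the deleted set is genuinely small ($o(n)$), then observe that any subgraph of $H$ with all degrees $\ge \frac1{100}n^{1/4}$ on $n-o(n)$ vertices per side is connected, because a proper subset cut would separate two sets each of size $\ge n/20$ (as the min-degree forces each side of a component to have size $\ge \frac1{100}n^{1/4}$, and if the cut is balanced enough...); I will spell out that a vertex cut giving parts of sizes summing to $\le 2n$ with both parts having $\ge n/20$ on both sides is impossible by discrepancy, and parts with a side of size $<n/20$ are impossible by the min-degree-forces-large-neighborhood fact. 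Modulo arranging these size bookkeeping details with explicit constants (which is where the real care goes), the lemma follows.
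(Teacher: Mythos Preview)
Your approach has a genuine gap that you partially recognize but never resolve: the discrepancy and expansion properties you establish are about $H$-edges, whereas connectivity of $F_0$ is about $F$-edges, and there is no bridge between the two. Concretely, after peeling you obtain $F_0$ with minimum degree $\Omega(n^{1/4})$ and parts $A_0,B_0$ of size $\geq n/20$, but nothing prevents $F_0$ from decomposing into many ``medium'' components, each with both sides of size, say, $\Theta(n^{1/4})$ or $\Theta(n^{1/2})$; your attempts to rule this out all fail. The claim that the peeled set is $o(n)$ is false (take $F$ to be $H$ restricted to a subset of $n/3$ vertices of $A$: this $F$ has roughly $\tfrac13 e(H)$ edges, yet $2n/3$ vertices of $A$ have $F$-degree zero and are deleted). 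The ``small components have few edges'' argument fails because the max-degree bound only gives $e(C)\le 2n^{1/4}\ab{A_C}$, and summing this over all components merely yields $\sum e(C)\le 2n^{1/4}\cdot n$, which is not a contradiction. And every time you try to use discrepancy to find an edge crossing a component cut, you correctly note that the edge lies in $H$, not $F$.

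The paper's proof bypasses all of this by union-bounding directly over partitions rather than over pairs of sets. If no component of $F$ is large on both sides, then the component partition $\{(A_i,B_i)\}$ satisfies $\min(\ab{A_i},\ab{B_i})<n/10$ for every $i$, which forces $\sum_i \ab{A_i}\ab{B_i}\le n^2/5$ (split the indices according to which side is small and use $\sum\ab{A_i}\le n$, $\sum\ab{B_i}\le n$). Hence $\sum_i e_H(A_i,B_i)$ is stochastically dominated by $\Bin(n^2/5,p)$, and a single Chernoff bound gives failure probability $e^{-\Omega(pn^2)}=e^{-\Omega(n^{5/4})}$, small enough to beat the $\le n^{2n}$ choices of a pair of partitions. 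This is exactly the step your argument is missing: the probabilistic input must already be phrased in terms of partitions (so that it controls $\sum_i e_H(A_i,B_i)$), not in terms of single pairs $(X,Y)$.
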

\begin{proof}
  By the Chernoff bound, $H$ has at least $\frac 34 pn^2$ edges w.h.p. We now condition on this event happening.

  Suppose for contradiction that there exists an $F$ without this property, and let its connected components be $F_1,\dots,F_\ell$. For $1 \leq i \leq \ell$, let the vertex set of $F_i$ be $A_i \cup B_i$, where $A_i \subseteq A, B_i \subseteq B$. By assumption, we know that $\min\{\ab{A_i},\ab{B_i}\}< n/10$ for each $i$, and that
  \[
    \frac 14 pn^2 \leq \frac 13 e(H) \leq e(F) = \sum_{i=1}^\ell e_F(A_i,B_i) \leq \sum_{i=1}^\ell e_H(A_i,B_i).
  \]
  So to upper-bound the probability that such an $F$ exists, it suffices to upper-bound the probability that there exist disjoint $A_1,\dots,A_\ell \subseteq A, B_1,\dots,B_\ell \subseteq B$ with the properties that $\min\{\ab{A_i},\ab{B_i}\}< n/10$ for each $i$ and that $\sum_i e_H(A_i,B_i) \geq \frac 14 pn^2$.

  Fix disjoint sets $A_1,\dots,A_\ell \subseteq A, B_1,\dots,B_\ell \subseteq B$ with $\min\{\ab{A_i},\ab{B_i}\}< n/10$ for each $i$. Let $I$ be the set of indices $i$ with $\ab{A_i} \leq n/10$, and let $J=[\ell] \setminus I$. We have that
  \[
    \sum_{i=1}^\ell \ab{A_i} \ab{B_i} = \sum_{i \in I} \ab{A_i} \ab{B_i} + \sum_{i \in J} \ab{A_i} \ab{B_i} \leq \frac n{10} \sum_{i \in I} \ab{B_i} + \frac n {10} \sum_{i \in J} \ab{A_i} \leq \frac{n}{10}(\ab{B}+\ab A) = \frac{n^2}{5}.
  \]
  Let $X$ denote the random variable $\sum_{i=1}^\ell e_H(A_i,B_i)$. By the computation above, we see that $X$ is upper-bounded in distribution by $\Bin(n^2/5, p)$. By the Chernoff bound,
  \[
    \pr\left(X \geq \frac 14 pn^2\right) \leq \pr\left(\Bin\left(\frac{n^2}{5},p\right)\geq \frac{pn^2}{4}\right) \leq \exp\left(-\frac{pn^2}{250}\right).
  \]
  There are at most $n^n$ partitions of an $n$-element set\footnote{Much more precise bounds are known on the number of partitions of $[n]$, also known as the \emph{Bell numbers}. The simple upper bound of $n^n$ follows by noting that every function $[n] \to [n]$ yields a partition of  the domain.}, so this error probability of $e^{-\Omega(pn^2)}=e^{-\Omega(n^{5/4})}$ is enough to union-bound over at most $n^{2n} = e^{O(n\log n)}$ choices for $A_1,\dots,A_\ell,B_1,\dots,B_\ell$.
\end{proof}

Recall that for $X \subseteq A, Y \subseteq B$, we denote by $N(X,Y)$ the set of all vertices in $C$ with at least one neighbor in $X$ and at least one neighbor in $Y$.

\begin{lemma}\label{lem:NXY}
  Let $H$ be a tripartite random graph on parts $A,B,C$ with $\ab A = \ab B = \ab C = n$, where each edge appears with probability $p=n^{-3/4}$. The following holds w.h.p.\ as $n \to \infty$. For all $X \subseteq A,Y\subseteq B$  with $\ab{X},\ab{Y} \geq n/10$, we have that $\ab{N(X,Y)} \geq n-o(n)$.
\end{lemma}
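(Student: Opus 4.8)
The plan is to reduce to the extremal case $\ab X=\ab Y=\lceil n/10\rceil$ and then apply a union bound, exploiting the fact that the edges from distinct vertices of $C$ to $X\cup Y$ are independent.

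First I would note that $N(X,Y)$ is monotone in both arguments, so it suffices to prove that \whp $\ab{N(X,Y)}\ge n-o(n)$ for every $X\subseteq A$, $Y\subseteq B$ with $\ab X=\ab Y=s:=\lceil n/10\rceil$; the general case follows by passing to subsets of size $s$. Fix such a pair. For each $c\in C$ let $E_c$ be the event that $c\notin N(X,Y)$, i.e.\ that $c$ has no neighbour in $X$ or no neighbour in $Y$. A union bound gives $\pr(E_c)\le 2(1-p)^s\le 2e^{-ps}\le 2e^{-n^{1/4}/10}=:q$, using $p=n^{-3/4}$ and $s\ge n/10$. The key point is that the events $(E_c)_{c\in C}$ are mutually independent, since $E_c$ is determined by the edges between $c$ and $X\cup Y$, and these edge sets are pairwise disjoint for distinct $c$. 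Hence $\ab{C\setminus N(X,Y)}$, being a sum of $n$ independent Bernoulli variables with parameters at most $q$, is stochastically dominated by $\Bin(n,q)$.

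Next I would union bound over the at most $\binom ns^2\le 4^n$ choices of $(X,Y)$. Setting the threshold $t:=\lceil n^{4/5}\rceil$, for each fixed pair we have
\[
  \pr\big(\ab{N(X,Y)}<n-t\big)\le \pr\big(\Bin(n,q)\ge t\big)\le \binom nt q^t\le (nq)^t=\exp\big(t(\ln(2n)-n^{1/4}/10)\big)=\exp\big(-\Omega(n^{21/20})\big),
\]
which dominates the $\exp(O(n))$ factor coming from the number of pairs. Thus \whp every $X,Y$ of size $s$ satisfies $\ab{N(X,Y)}\ge n-n^{4/5}$, and by the monotonicity reduction so does every $X\subseteq A$, $Y\subseteq B$ with $\ab X,\ab Y\ge n/10$, giving $\ab{N(X,Y)}\ge n-n^{4/5}=n-o(n)$. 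The analogous bounds for $N(X,Z)$ and $N(Y,Z)$ follow by the identical argument with the roles of the parts permuted.

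I do not anticipate a genuine obstacle here; the only point requiring care is the bookkeeping in the union bound, where the failure threshold $t$ must be large enough that the Binomial-tail exponent $t\cdot ps$, of order $tn^{1/4}$, beats the entropy $n\log 4$ of the collection of sets, yet small enough to remain $o(n)$ so that the conclusion $\ab{N(X,Y)}\ge n-o(n)$ survives. Any $t$ with $n^{3/4}\log n\ll t\ll n$ works, and $t=n^{4/5}$ is a convenient choice.
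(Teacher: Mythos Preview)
Your proof is correct and follows essentially the same approach as the paper: bound $\pr(E_c)$ for each $c\in C$, use independence of the $E_c$ to control the binomial-type tail $\pr(\ab{C\setminus N(X,Y)}\ge t)$, and union bound over all pairs $(X,Y)$. The only cosmetic differences are your preliminary monotonicity reduction to $\ab X=\ab Y=\lceil n/10\rceil$ and your choice of threshold $t=n^{4/5}$ (the paper uses $k=n^{7/8}$), neither of which affects the argument.
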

\begin{proof}
  Fix $X,Y$. For $c \in C$, let $E_c$ be the event that $c$ has no neighbor in $X$ or no neighbor in $Y$. Then
  \[
    \pr(E_c) \leq (1-p)^{\ab{X}} + (1-p)^{\ab{Y}} \leq 2 (1-p)^{n/10} \leq 2 e^{-n^{1/4}/10} \leq e^{-n^{1/5}}
  \]
  for all sufficiently large $n$.

  Note that the events $E_c$ are independent for different choices of $c$. Therefore, for an integer $k$, the probability that $E_c$ happens for at least $k$ different choices of $c$ is at most
  \[
    \binom nk \pr(E_c)^k \leq n^k e^{-kn^{1/5}} \leq e^{-k n^{1/6}}
  \]
  since $n^{1/5}-\log n \geq n^{1/6}$ for sufficiently large $n$. Now let $k=n^{7/8}$, so that the probability that this happens is at most $e^{-n^{25/24}}$. This is enough to beat the union bound over all $2^{2n}$ choices of $X \subseteq A, Y \subseteq B$. In particular, we find that for every such pair, $\ab{N(X,Y)} \geq n - k=n-o(n)$.
\end{proof}

\begin{lemma}\label{lem:expansion}
  Let $H$ be a bipartite random graph on parts $A,B$ with $\ab A = \ab B = n$, where each edge appears with probability $p=n^{-3/4}$. The following holds w.h.p.\ as $n \to \infty$. For all non-empty $W\subseteq B$  with $\ab{W} \leq n/12$, we have that $\ab{N_A(W)}> 6\ab W$.
\end{lemma}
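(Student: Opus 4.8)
This is a routine vertex-expansion estimate for a sparse random bipartite graph, and the plan is a straightforward first-moment union bound over all possible ``small neighbourhoods''. Fix a value $w$ with $1\le w\le n/12$, and suppose that the conclusion fails for some $W\subseteq B$ with $|W|=w$, i.e.\ $|N_A(W)|\le 6w$. Then $N_A(W)$ is contained in some fixed set $S\subseteq A$ of size $\lceil 6w\rceil$ (enlarging $N_A(W)$ arbitrarily if necessary; this is legitimate since $\lceil 6w\rceil\le 6w+1\le n/2+1<n$). By definition of $N_A(W)$, the failure of the conclusion for this particular $W$ means there is \emph{no} edge of $H$ between $W$ and $A\setminus S$. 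Since $|A\setminus S|\ge n-6w-1\ge n/3$ (using $w\le n/12$ and $n$ large), for a fixed pair $(W,S)$ this event has probability at most
\[
  (1-p)^{|W|\cdot|A\setminus S|}\le (1-p)^{wn/3}\le e^{-pwn/3}.
\]

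First I would union-bound over the choices of $(W,S)$ with $|W|=w$: there are $\binom nw\le n^w$ choices for $W$ and $\binom{n}{\lceil 6w\rceil}\le n^{7w}$ choices for $S$, so the probability that the conclusion fails for \emph{some} $W$ of size exactly $w$ is at most $n^{8w}e^{-pwn/3}=\exp\!\bigl(w(8\log n-\tfrac13 pn)\bigr)$. Since $p=n^{-3/4}$, we have $\tfrac13 pn=\tfrac13 n^{1/4}$, which exceeds $16\log n$ for all sufficiently large $n$; hence each such summand is at most $e^{-w n^{1/4}/6}$ (say). Summing over $1\le w\le n/12$ gives
\[
  \sum_{w\ge 1} e^{-w n^{1/4}/6}\le \frac{e^{-n^{1/4}/6}}{1-e^{-n^{1/4}/6}}=o(1),
\]
so w.h.p.\ the conclusion holds simultaneously for all non-empty $W\subseteq B$ with $|W|\le n/12$. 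The same argument with the roles of $A$ and $B$ exchanged (and, in the application, with $B$ replaced by $C$ or $A$) handles the remaining pairs of parts required by \cref{it:expansion}.

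I do not expect any genuine obstacle here: the only points requiring a little care are (a) the rounding of $6w$, handled by passing to a fixed superset $S$ of size $\lceil 6w\rceil$, and (b) checking that $|A\setminus S|$ stays linear in $n$, which is exactly why the hypothesis $|W|\le n/12$ is imposed. The numerical slack between $8\log n$ and $\tfrac13 n^{1/4}$ is enormous, so the union bound is comfortable; in fact any density $p=n^{-1+\theta}$ with $\theta>0$ would suffice by the same computation.
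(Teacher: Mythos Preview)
Your proof is correct and follows essentially the same first-moment union-bound argument as the paper's own proof: both bound $\sum_w \binom{n}{w}\binom{n}{6w}(1-p)^{w(n-6w)}$ by $\sum_w \exp\bigl(w(O(\log n)-\Omega(n^{1/4}))\bigr)=o(1)$, with only cosmetic differences in the constants. (One small remark: since $w=|W|$ is an integer, $6w$ is already an integer, so the rounding you worry about in point~(a) never actually arises.)
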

\begin{proof}
  Summing over all choices for $w=\ab W$, the probability that all edges from $W$ go to another set of size at most $6w$ is at most
  \[
    \sum_{w=1}^{n/12} \binom nw \binom n{6w} (1-p)^{w(n-6w)} \leq \sum_{w=1}^{n/12} n^{7w} e^{-pwn/2} = \sum_{w=1}^{n/12} \exp\left(w\left(7\log n- \frac 12 n^{1/4}\right)\right) = o(1).\qedhere
  \]
\end{proof}

\begin{lemma}\label{lem:triangles}
  Let $H$ be a tripartite random graph on parts $A,B,C$ with $\ab A = \ab B = \ab C = n$, where each edge appears with probability $p=n^{-3/4}$. Then w.h.p.\ there are at most $n^{4/5}$ triangles, and every vertex lies in at most four triangles.
\end{lemma}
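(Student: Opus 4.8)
The plan is to establish the two assertions separately, each by a first-moment estimate adapted to the sparse regime $p=n^{-3/4}$.

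\emph{Bounding the total number of triangles.} Let $X$ be the number of triangles in $H$. Since $H$ is tripartite, every triangle uses exactly one vertex from each part, so there are $n^3$ potential triangles, each present with probability $p^3=n^{-9/4}$, whence $\E X=n^3p^3=n^{3/4}$. Since $4/5>3/4$, Markov's inequality gives $\pr[X\ge n^{4/5}]\le n^{3/4}/n^{4/5}=n^{-1/20}=o(1)$, which proves the first claim.

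\emph{Bounding the number of triangles per vertex.} I would bound the probability that some vertex lies in at least five triangles; by symmetry it suffices to do this for $v\in A$ and then sum over the three parts. Suppose $v\in A$ lies in five distinct triangles $vb_ic_i$, $1\le i\le5$. Let $j$ be the number of distinct vertices among $b_1,\dots,b_5$ and $k$ the number among $c_1,\dots,c_5$, so $1\le j,k\le5$. Then $H$ contains the subgraph $F$ on the vertex set $\{v\}\cup\{b_1,\dots,b_5\}\cup\{c_1,\dots,c_5\}$ (of size $1+j+k$) whose edges are the $j$ edges from $v$ to the distinct $b_i$'s, the $k$ edges from $v$ to the distinct $c_i$'s, and the five edges $b_ic_i$ --- these five $B$--$C$ edges are pairwise distinct exactly because the five triangles are distinct, and clearly distinct from the edges at $v$. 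So $F$ has $1+j+k$ vertices, $j+k+5$ edges, and a distinguished vertex in $A$. For each fixed pair $(j,k)$, specifying such a configuration amounts to choosing $v$, the $j$ vertices of $B$, the $k$ vertices of $C$, and which five of the at most $jk$ candidate $B$--$C$ edges are used, so the expected number of configurations is at most $n\cdot n^{j}\cdot n^{k}\cdot\binom{25}{5}\cdot p^{j+k+5}=O\!\bigl(n^{\frac14(j+k)-\frac{11}{4}}\bigr)$; since $j+k\le10$ this is $O(n^{-1/4})$. Summing over the finitely many pairs $(j,k)$ gives $\pr[\text{some }v\in A\text{ lies in }\ge5\text{ triangles}]=O(n^{-1/4})$, and likewise for $B$ and $C$. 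Thus \whp every vertex lies in at most four triangles, and a final union bound with the first part completes the proof.

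The only mild obstacle I foresee is the bookkeeping in the second part: checking that five triangles through $v$ genuinely force one of the finitely many subgraphs $F$, and that the exponent $\tfrac14(j+k)-\tfrac{11}{4}$ is negative even in the extreme case $j=k=5$ (there $F$ is a star at $v$ together with a perfect matching on ten further vertices, having $11$ vertices and $15$ edges, so its expected count is $n^{11}p^{15}=n^{-1/4}$, still $o(1)$). Nothing beyond Markov's inequality and a union bound is needed. Alternatively, one can run the second part by conditioning on the high-probability event $\mathcal D$ that every vertex of $A$ has at most $2np$ neighbours in $B$ and at most $2np$ in $C$; since $\mathcal D$ depends only on the edges incident to $A$, it is independent of the $B$--$C$ edges, so conditionally on $\mathcal D$ the number of triangles through a given $v\in A$ is stochastically dominated by $\Bin\!\bigl((2np)^2,p\bigr)$, which is at least $5$ with probability $O\!\bigl((2np)^{10}p^5\bigr)=O(n^{-5/4})$; summing over $v$ again yields $O(n^{-1/4})$.
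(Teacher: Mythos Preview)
Your proof is correct. The first part (Markov on the triangle count) is identical to the paper's argument. For the second part, your \emph{alternative} argument---condition on the high-probability degree bound, then observe that the triangles through $v\in A$ are governed by the independent $B$--$C$ edges inside $N(v)$, giving a $\Bin((2np)^2,p)$ domination and hence probability $O(n^{-5/4})$ per vertex---is exactly the paper's proof (the paper phrases it as ``edges inside $N(v)$'' and bounds $\binom{\binom{|N(v)|}{2}}{5}p^5$). Your \emph{main} argument, enumerating the finitely many subgraph shapes $F_{j,k}$ on $1+j+k$ vertices and $j+k+5$ edges and checking $n^{1+j+k}p^{j+k+5}=n^{(j+k)/4-11/4}\le n^{-1/4}$, is a cleaner direct first-moment computation that avoids the conditioning step entirely; it buys you a self-contained argument without invoking Chernoff, at the cost of a little combinatorial bookkeeping. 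Either route is fine here.
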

\begin{proof}
  The expected number of triangles in $H$ is $p^3 n^3 = n^{3/4}$, so the first result follows by Markov's inequality.

  For the second, first note that by the Chernoff bound, w.h.p.\ every vertex has degree at most $2pn = 2n^{1/4}$. We now condition on this event. There is a bijection between triangles containing a vertex $v$ and edges inside $N(v)$, so it suffices to prove that w.h.p.\ $N(v)$ spans at most four edges. For fixed $v$, the probability that $e(N(v))\geq 5$ is at most
  \[
    \binom{\binom{\ab{N(v)}}2}5 p^5 \leq \binom{2n^{1/2}}5 p^5 \leq 32 (n^{1/2}p)^5 = 32 n^{-5/4}.
  \]
  By the union bound, the probability that this happens for any vertex $v$ is at most $O(n^{-1/4})=o(1)$.
\end{proof}
\begin{lemma}\label{lem:big-sets}
  Let $H$ be a bipartite random graph on parts $A,B$ with $\ab A = \ab B = n$, where each edge appears with probability $p=n^{-3/4}$.
  The following holds w.h.p.\ as $n \to \infty$. For all $X \subseteq A, Y \subseteq B$ with $\ab X, \ab Y \geq n/100$, we have that $e(X,Y) \geq n$.
\end{lemma}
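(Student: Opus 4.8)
The plan is a routine first-moment plus Chernoff union bound. First I would use monotonicity: if $X \subseteq X'$ and $Y \subseteq Y'$, then $e(X,Y) \leq e(X',Y')$. Hence it suffices to prove the bound for all pairs $X \subseteq A$, $Y \subseteq B$ with $\ab X = \ab Y = \lceil n/100 \rceil$, since the conclusion for all larger sets then follows immediately.

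Next, fix such a pair $X,Y$. Since edges of $H$ appear independently, $e(X,Y)$ is distributed as $\Bin(\ab X \ab Y, p)$, and so its mean is $\mu := p\ab X\ab Y \geq p(n/100)^2 = n^{5/4}/10^4$. In particular $\mu \geq 4n$ for all sufficiently large $n$, so $\{e(X,Y) < n\} \subseteq \{e(X,Y) < \mu/4\}$. By the Chernoff bound, $\pr(e(X,Y) < \mu/4) \leq \exp(-c\mu) \leq \exp\!\big(-cn^{5/4}/10^4\big)$ for an absolute constant $c>0$.

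Finally, take a union bound over all admissible pairs. There are at most $\binom{n}{\lceil n/100\rceil}^2 \leq 2^{2n} = e^{O(n)}$ such pairs, so the probability that some pair with $\ab X = \ab Y = \lceil n/100\rceil$ satisfies $e(X,Y) < n$ is at most $e^{O(n)}\cdot \exp(-\Omega(n^{5/4})) = o(1)$, since $n^{5/4}$ dominates $n$. Combined with the monotonicity reduction of the first step, this gives the lemma.

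There is essentially no obstacle here: the argument is the standard ``expected edge count is polynomially larger than the threshold, which beats the entropy $e^{O(n)}$ of the union bound.'' The only point requiring a moment's care is the reduction to minimal sets via monotonicity, which is what guarantees the expectation $\mu$ appearing in the Chernoff estimate is never smaller than $n^{5/4}/10^4$, and hence always far above the target value $n$.
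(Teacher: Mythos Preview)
Your proof is correct and essentially identical to the paper's: both compute $\mu \geq pn^2/10^4 = n^{5/4}/10^4$, apply Chernoff to get failure probability $e^{-\Omega(n^{5/4})}$, and union-bound over at most $2^{2n}$ pairs. The only cosmetic difference is that you first reduce via monotonicity to sets of size exactly $\lceil n/100\rceil$, whereas the paper simply union-bounds directly over all $\leq 2^{2n}$ subsets; this reduction is harmless but unnecessary.
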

\begin{proof}
  The expected number of edges between $X$ and $Y$ is $p\ab X \ab Y \geq pn^2/10^4 \geq 2n$ for sufficiently large $n$. Therefore, by the Chernoff bound, the probability that $e(X,Y)<n$ is at most $e^{-pn^2/10^5} = e^{-\Omega(n^{5/4})}$. This is small enough to union-bound over all $\leq 2^{2n}$ choices for $X,Y$.
\end{proof}

We are finally ready to prove \cref{thm:pseudorandom}.
\begin{proof}[Proof of \cref{thm:pseudorandom}]
  We sample a random tripartite graph $H_0$ on vertex set $A \cup B \cup C$, where each edge appears with probability $p=n^{-3/4}$. With high probability, the outcomes of \cref{lem:connected,lem:expansion,lem:NXY,lem:triangles,lem:big-sets} hold. We now delete a single edge from every triangle in $H_0$. In so doing, we delete at most $n^{4/5}$ edges, and for any vertex, we delete at most four of its incident edges, both by \cref{lem:triangles}. Let $H$ be the resulting graph. We claim that $H$ satisfies all the desired properties.

  Certainly, $H$ is triangle-free, proving \cref{it:triangle-free}. For \cref{it:expansion}, note that every vertex in $Z$ lost at most four incident edges when passing from $H_0$ to $H$, so $\ab{N_A(Z)}$ decreased by at most $4 \ab Z$ when passing from $H_0$ to $H$. As \cref{lem:expansion} gives that $\ab{N_A(Z)}>6\ab Z$ in $H_0$, we get \cref{it:expansion}.  For \cref{it:common-nbhd}, recall that we delete at most $n^{4/5}$ edges when passing from $H_0$ to $H$, so for any fixed sets $X,Y$, the value of $\ab{N(X,Y)}$ decreases by at most $n^{4/5}$ when passing from $H_0$ to $H$. As $\ab{N(X,Y)} \geq n-o(n)$ in $H_0$ by \cref{lem:NXY}, we get \cref{it:common-nbhd}. Finally, for \cref{it:connected}, note that if $F$ has at least half the edges of $H[A \cup B]$, then it has at least one-third the edges of $H_0[A \cup B]$, so the result follows immediately from \cref{lem:connected}.

  We now note that for all subsets $X\subseteq A, Y \subseteq B$ of order at least $n/100$, there is at least one edge between $X$ and $Y$. Indeed,
  \cref{lem:big-sets} guarantees at least $n$ edges in $H_0$ between $X$ and $Y$, and at most $n^{4/5}$ of these are deleted when passing to $H_0$, so at least one remains. Similarly, for any $Z \subseteq C$ of order at least $n/100$, there is at least one edge between $X$ and $Z$ and at least one edge between $Y$ and $Z$.

  It remains to prove \cref{it:uniquely-colorable}. So suppose that there is a proper 3-coloring of $H$, namely a partition into independent sets $D,E,F$. We wish to prove that up to permuting the colors, $A=D, B=E, C=F$. We first claim that at most one of the numbers $\ab{D \cap A}, \ab{D \cap B}, \ab{D \cap C}$ is at least $n/100$. Indeed, suppose without loss of generality that $\ab{D \cap A}, \ab{D \cap B} \geq n/100$. Then by the observation in the previous paragraph, there is an edge between $D \cap A$ and $D \cap B$, contradicting that $D$ is an independent set.

  Using this claim, we see that, potentially after permuting the colors, we have that $\ab{D \cap A}, \ab{E \cap B}, \ab{F \cap C} \geq \frac{98}{100}n$. We now claim that in fact $D=A, E=B, F=C$. Indeed, suppose this is not the case, and assume without loss of generality that $\ab{C \setminus F} \geq \ab{A \setminus D}, \ab{B \setminus E}$. Without loss of generality, at least half the vertices in $C \setminus F$ are colored $D$. Let $Z = (C \setminus F) \cap D$. Note that $\ab Z \leq \ab{C \setminus F} \leq n/50$, so by \cref{it:expansion}, we have that $\ab{N_A(Z)}>2 \ab Z \geq \ab{C \setminus F} \geq \ab{A \setminus D}$, as we assumed that $\ab Z \geq \frac 12 \ab{C \setminus F}$. Since $\ab{N_A(Z)} > \ab{A \setminus D}$, there must be at least one edge between $Z$ and $A \cap D$. But every vertex in $Z$ and in $A \cap D$ is colored $D$, contradicting that $D$ is an independent set. This contradiction concludes the proof of \cref{it:uniquely-colorable}.
\end{proof}

\section{Concluding remarks}\label{sec:conclusion}
A fundamental question which remains open is \cref{prob:K_t-abundant}: does there exist a $t$-chromatic $K_t$-abundant graph for any $t \geq 4$? We are inclined to believe that this is not the case, and every $t$-chromatic graph is non-$K_t$-abundant when $t \geq 4$. As mentioned in the introduction, the first open case is the Brinkmann graph; it would be very interesting to prove or disprove that this graph is $K_4$-abundant. 

Another interesting question is whether one can prove \cref{thm:main-non-abundant} unconditionally, i.e.\ without relying on \cref{conj:ruzsa}. 
It is known \cite[Theorem 2.3]{Ruzsa} that \cref{conj:ruzsa} holds in case $E$ has exactly one negative coefficient, as the Behrend construction can be used to find large sets with no non-trivial solutions to such equations; such equations are called \emph{convex}. Even simple cases beyond this are open; for example, if $E$ is the equation $x+3y=2z+2w$, then the best known lower bound on $r_E(m)$ is $\Omega(\sqrt m)$.

The sets of equations $S_{A,B,C}(G)$ that we obtain in the proof of \cref{thm:main-non-abundant} are very complicated, and they involve many coefficients of both signs. However, in the proof of \cref{prop:ruzsa-for-sets}, we have a great deal of freedom in how we construct a single equation from our family of equations. It is conceivable that  one could combine $S_{A,B,C}(G)$ into a single convex equation. If this were possible, it would yield a proof of \cref{thm:main-non-abundant} without the assumption that \cref{conj:ruzsa} holds.

However, computer simulation suggests that this may not work. Indeed, we wrote a computer program to sample a random tripartite graph and delete one edge from every triangle. Testing whether the set of equations arising from the cycles linearly spans a convex equation can be done efficiently, as this can be described as a linear programming problem. Even when testing fairly large random graphs (with hundreds of vertices and thousands of edges), we were not able to find one where the equations arising from cycles span a convex equation.

\paragraph{Acknowledgments}
We are grateful to the anonymous referees for their helpful suggestions.


\begin{thebibliography}{10}
\providecommand{\url}[1]{\texttt{#1}}
\providecommand{\urlprefix}{URL }
\providecommand{\eprint}[2][]{\url{#2}}

\bibitem{Alon94}
N.~Alon, Explicit {R}amsey graphs and orthonormal labelings, \emph{Electron. J.
  Combin.} \textbf{1} (1994), Research Paper 12, 8pp.

\bibitem{Alon}
N.~Alon, Testing subgraphs in large graphs, \emph{Random Structures Algorithms}
  \textbf{21} (2002), 359--370.

\bibitem{AK}
N.~Alon and M.~Krivelevich, Testing {$k$}-colorability, \emph{SIAM J. Discrete
  Math.} \textbf{15} (2002), 211--227.

\bibitem{AS}
N.~Alon and A.~Shapira, A characterization of easily testable induced
  subgraphs, \emph{Combin. Probab. Comput.} \textbf{15} (2006), 791--805.

\bibitem{Behrend}
F.~A. Behrend, On sets of integers which contain no three terms in arithmetical
  progression, \emph{Proc. Nat. Acad. Sci. U.S.A.} \textbf{32} (1946),
  331--332.

\bibitem{BESS}
B.~Bollob\'{a}s, P.~Erd\H{o}s, M.~Simonovits, and E.~Szemer\'{e}di, Extremal
  graphs without large forbidden subgraphs, \emph{Ann. Discrete Math.}
  \textbf{3} (1978), 29--41.

\bibitem{CF}
D.~Conlon and J.~Fox, Graph removal lemmas, in \emph{Surveys in combinatorics
  2013}, \emph{London Math. Soc. Lecture Note Ser.}, vol. 409, Cambridge Univ.
  Press, Cambridge, 2013,  1--49.

\bibitem{CFSZ}
D.~Conlon, J.~Fox, B.~Sudakov, and Y.~Zhao, The regularity method for graphs
  with few 4-cycles, \emph{J. Lond. Math. Soc. (2)} \textbf{104} (2021),
  2376--2401.

\bibitem{Csaba}
B.~Csaba, Regular decomposition of the edge set of graphs with applications,
  2021. Preprint available at arXiv:2109.12394.

\bibitem{DLR}
R.~A. Duke, H.~Lefmann, and V.~R\"{o}dl, A fast approximation algorithm for
  computing the frequencies of subgraphs in a given graph, \emph{SIAM J.
  Comput.} \textbf{24} (1995), 598--620.

\bibitem{Erdos}
P.~Erd\H{o}s, On extremal problems of graphs and generalized graphs,
  \emph{Israel J. Math.} \textbf{2} (1964), 183--190.

\bibitem{Fox}
J.~Fox, A new proof of the graph removal lemma, \emph{Ann. of Math. (2)}
  \textbf{174} (2011), 561--579.

\bibitem{FZ}
J.~Fox and Y.~Zhao, Removal lemmas and approximate homomorphisms, \emph{Combin.
  Probab. Comput.} \textbf{31} (2022), 721--736.

\bibitem{FK}
A.~Frieze and R.~Kannan, Quick approximation to matrices and applications,
  \emph{Combinatorica} \textbf{19} (1999), 175--220.

\bibitem{GS}
L.~Gishboliner and A.~Shapira, A generalized {T}ur\'{a}n problem and its
  applications, \emph{Int. Math. Res. Not. IMRN}  (2020), 3417--3452.

\bibitem{GGR}
O.~Goldreich, S.~Goldwasser, and D.~Ron, Property testing and its connection to
  learning and approximation, \emph{J. ACM} \textbf{45} (1998), 653--750.

\bibitem{Komlos}
J.~Koml\'{o}s, Covering odd cycles, \emph{Combinatorica} \textbf{17} (1997),
  393--400.

\bibitem{KST}
T.~K\"{o}vari, V.~T. S\'{o}s, and P.~Tur\'{a}n, On a problem of {K}.
  {Z}arankiewicz, \emph{Colloq. Math.} \textbf{3} (1954), 50--57.

\bibitem{Ruzsa}
I.~Z. Ruzsa, Solving a linear equation in a set of integers. {I}, \emph{Acta
  Arith.} \textbf{65} (1993), 259--282.

\bibitem{RS}
I.~Z. Ruzsa and E.~Szemer\'{e}di, Triple systems with no six points carrying
  three triangles, in \emph{Combinatorics ({P}roc. {F}ifth {H}ungarian
  {C}olloq., {K}eszthely, 1976), {V}ol. {II}}, \emph{Colloq. Math. Soc.
  J\'{a}nos Bolyai}, vol.~18, North-Holland, Amsterdam-New York, 1978,
  939--945.

\end{thebibliography}
\end{document}